\numberwithin{equation}{section}
\newtheorem{theorem}[equation]{Theorem}
\newtheorem{maintheorem}{Theorem}
\newtheorem{lemma}[equation]{Lemma}
\newtheorem{proposition}[equation]{Proposition}
\newtheorem{corollary}[equation]{Corollary}
\newtheorem{question}[equation]{Question}
\theoremstyle{definition}
\newtheorem{definition}[equation]{Definition}
\newtheorem{notation}[equation]{Notation}
\newtheorem{remark}[equation]{Remark}
\DeclareMathOperator{\Spec}{Spec}
\DeclareMathOperator{\Ext}{Ext}
\DeclareMathOperator{\Tor}{Tor}
\DeclareMathOperator{\Hom}{Hom}
\DeclareMathOperator{\RHom}{RHom}
\DeclareMathOperator{\ob}{ob}
\newcommand{\cat}[1]{\mathsf{#1}}
 \newcommand{\Mod}{\cat{Mod}}
\newcommand{\thick}{\cat{thick}}
\newcommand{\derived}[1]{\cat{D}(#1)}
\newcommand{\latt}{\mathbf{m}}
\newcommand{\Latt}{\mathbf{M}}
\newcommand{\B}{\cat{B}}
\newcommand{\DL}{\cat{DL}}
\newcommand{\BA}{\cat{BA}}
\newcommand{\bousfield}[1]{\left\langle #1 \right\rangle}
\newcommand{\Smash}{\ltensor{\Alg}}
\newcommand{\mycomp}{^{c}}
\newcommand{\I}[1]{I(#1)}
\newcommand{\M}[1]{\Alg(#1)}
\newcommand{\alg}[1]{\Alg(#1)}
\newcommand{\Alg}{\Lambda}
\renewcommand{\k}[1]{k(#1)}
\newcommand{\module}[1]{M_{#1}}
\newcommand{\restrict}[2]{#1\rvert_{#2}}
\newcommand{\N}{\mathbf{N}}
\newcommand{\tensor}[1]{\underset{#1}{\otimes}}
\newcommand{\ltensor}[1]{\underset{#1}{\overset{L}{\otimes}}}
\newcommand{\incl}[2]{\iota_{#1}^{#2}}
\newcommand{\bcdual}[1]{I ({#1})}
\newcommand{\comm}{\sim}
\newcommand{\lecomm}{\lesssim}
\newcommand{\suchthat}{\,:\,}
\begin{document}

\title[Truncated polynomial algebras]
{The Bousfield lattice for \\ truncated polynomial algebras}

\author{W. G. Dwyer}
\address{(Dwyer) Department of Mathematics, University of Notre Dame,
Notre Dame, IN 46556, USA}
\email{dwyer.1@nd.edu}

\author{J. H. Palmieri}
\address{(Palmieri) Department of Mathematics, Box 354350, University
of Washington, Seattle, WA 98195, USA}
\email{palmieri@math.washington.edu}

\subjclass{18E30,  
  55U35, 
  13D07, 
  13D25 
  }

\keywords{Bousfield lattice, derived category, commutative ring}

\begin{abstract}
The global structure of the unbounded derived category of a truncated
polynomial ring on countably many generators is investigated, via its
Bousfield lattice.  The Bousfield lattice is shown to have cardinality
larger than that of the real numbers, and objects with large
tensor-nilpotence height are constructed.
\end{abstract}

\maketitle

\section{Introduction}

Suppose that $R$ is a commutative ring.  In $\derived{R}$, the
unbounded derived category of $R$, the derived tensor product
$-\ltensor{R}-$ is exact, associative, and commutative.  Inspired by
stable homotopy theory, we make the following definition: given an
object $X$ in $\derived{R}$, its \emph{Bousfield class} is
\[
\bousfield{X} = \{W \in \ob \derived{R} \suchthat X \ltensor{R} W = 0 \}.
\]
We say that objects $X$ and $Y$ are \emph{Bousfield equivalent} if
$\bousfield{X}=\bousfield{Y}$.  The Bousfield classes form a lattice
$\B=\B (R)$, where the partial ordering is by reverse
inclusion.\footnote{We note that it is not clear in general that the
Bousfield classes form a set.  If $R$ is countable or noetherian, then
they do; otherwise, it is unknown.  See Question~\ref{question-set}.}.
For example, $\bousfield{0}$ is the smallest element and
$\bousfield{R}$ is the largest one.  The study of stable homotopy
theory suggests that the structure of this lattice contains useful
information about the category $\derived{R}$.

For example, a theorem of Neeman \cite{neeman;derived} says that if
$R$ is commutative and noetherian, then the Bousfield lattice of
$\derived{R}$ is isomorphic to the Boolean algebra of subsets of
$\Spec (R)$.  The purpose of this paper is to investigate the
situation for a non-noetherian ring.

Fix a field $k$ and integers $n_{i} \geq 2$ for all $i \geq 1$.  For
any integer $m \geq 1$, consider the truncated polynomial algebra
\[
\Alg_{m} = k[x_{1}, x_{2}, \dotsc, x_{m}] / (x_{i}^{n_{i}} 
\ \text{for} \ 1 \leq i \leq m).
\]
Grade $\Alg_{m}$ so that it is locally finite and graded connected, and
also so that it is graded commutative: for example, set $\deg x_{i} =
2^{i}$.  Since $\Spec (\Alg_{m})$ contains only one element, the ideal
consisting of all elements of positive degree, Neeman's theorem says
that the Bousfield lattice of $\derived{\Alg_{m}}$ is not very
interesting: it has two elements, $\bousfield{0}$ and
$\bousfield{\Alg_{m}}$, and any two nonzero objects of $\derived{\Alg_{m}}$
are Bousfield equivalent to each other.

This situation gets much more complicated if we remove the noetherian
condition.  Let
\begin{equation}\label{eqn-trunc}
\Alg = k[x_{1}, x_{2}, x_{3}, \dotsc] / (x_{i}^{n_{i}} \ \text{for all} \ i)
\end{equation}
be a truncated polynomial algebra on countably many generators, graded
as $\Alg_{m}$ is.  Assume that the base field $k$ is countable, so
that $\Alg$ is countable and its Bousfield lattice forms a set.  In
this paper, we investigate this Bousfield lattice.  For example, we
have the following theorem.

\begin{maintheorem}\label{thm-uncountable}
The Bousfield lattice $\B = \B (\Alg)$ of $\derived{\Alg}$ has
cardinality at least $2^{2^{\aleph_{0}}}$.
\end{maintheorem}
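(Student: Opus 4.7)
The plan is to construct $2^{2^{\aleph_{0}}}$ pairwise distinct Bousfield classes in $\B$ in two stages. First, I would produce a family $\{X_{\alpha} : \alpha \in A\}$ of objects in $\derived{\Alg}$, indexed by a set $A$ of cardinality $2^{\aleph_{0}}$, that is \emph{Bousfield-independent} in the sense that there exist test objects $W_{\alpha} \in \derived{\Alg}$ satisfying
\[
X_{\beta} \ltensor{\Alg} W_{\alpha} \simeq 0 \qquad\Longleftrightarrow\qquad \beta \neq \alpha.
\]
A natural choice for $A$ is $\prod_{i \geq 1}\{0, 1, \dotsc, n_{i}-1\}$, which has cardinality $2^{\aleph_{0}}$ since each $n_{i} \geq 2$. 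For a tuple $\alpha = (a_{1}, a_{2}, \dotsc)$, the module $X_{\alpha}$ is assembled from the finite-dimensional pieces $k[x_{i}]/(x_{i}^{a_{i}+1})$ using the decomposition $\Alg \cong \bigotimes_{i} k[x_{i}]/(x_{i}^{n_{i}})$ (tensor product over $k$), either as an infinite tensor product or as a filtered colimit along the truncated subalgebras $\Alg_{m}$. The testers $W_{\alpha}$ are built in parallel, so that the derived tensor product records precisely the ``level'' $a_{i}$ at each factor.

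Granted such a family, the second stage is formal. For each subset $\mathcal{S} \subseteq A$ set
\[
Y_{\mathcal{S}} \;=\; \bigoplus_{\alpha \in \mathcal{S}} X_{\alpha} \in \derived{\Alg}.
\]
Independence immediately yields $\alpha \in \mathcal{S} \Longleftrightarrow Y_{\mathcal{S}} \ltensor{\Alg} W_{\alpha} \neq 0$, so the assignment $\mathcal{S} \mapsto \bousfield{Y_{\mathcal{S}}}$ is an injection from the power set of $A$ into $\B$, giving the bound $|\B| \geq 2^{|A|} = 2^{2^{\aleph_{0}}}$.

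The principal obstacle is Stage 1, and this is where the heart of the argument lies. By Neeman's theorem, each finite truncation $\Alg_{m}$ has only the two-element Bousfield lattice, so every distinction among the $X_{\alpha}$ must be created only in the passage to the non-noetherian colimit $\Alg$. Verifying the biconditional $X_{\beta} \ltensor{\Alg} W_{\alpha} \simeq 0 \Longleftrightarrow \beta \neq \alpha$ amounts to analyzing infinite tensor products of Koszul-type complexes over the factors $k[x_{i}]/(x_{i}^{n_{i}})$ and controlling how the factor-wise $\Tor$ computations combine in the colimit so that the tuple $\alpha$ is faithfully recorded. Exactly this interplay between finitary triviality and colimit-level abundance is what makes the theorem true and the construction delicate.
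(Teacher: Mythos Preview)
Your two-stage architecture is exactly right and matches the paper: produce a Bousfield-independent family of cardinality $2^{\aleph_0}$, then take direct sums over all subsets. Stage~2 is fine. The gap is in Stage~1.

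The independence condition you state cannot hold for your proposed family. If $\alpha,\beta\in A=\prod_i\{0,\dotsc,n_i-1\}$ differ at only finitely many coordinates, then $X_\alpha$ and $X_\beta$ lie in each other's thick subcategory: at a single coordinate~$i$, the short exact sequences
\[
0 \to \Sigma^{\deg x_i}\, k[x_i]/(x_i^{j-1}) \to k[x_i]/(x_i^{j}) \to k \to 0
\]
show that all the modules $k[x_i]/(x_i^{j})$, $1\le j\le n_i$, generate the same thick subcategory, and tensoring with the remaining factors carries this over to $\derived{\Alg}$. Hence $\bousfield{X_\alpha}=\bousfield{X_\beta}$, and no tester $W_\alpha$ can kill $X_\beta$ while keeping $X_\alpha$ alive. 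So your biconditional already fails for nearby tuples; the Bousfield class does not see the level at any individual coordinate.

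Even after collapsing finite discrepancies, the family is still not independent: for instance $\bousfield{\M{S}}\le\bousfield{\M{T}}$ whenever $S\lecomm T$, so comparable pairs cannot be separated in the required direction. The paper fixes both issues at once by restricting to an uncountable \emph{antichain} of commensurability classes of subsets $S_\alpha\subseteq\N$, taking $X_\alpha=\M{S_\alpha}$ and the testers to be the graded $k$-duals $W_\alpha=\I{S_\alpha}$. The single computation driving everything is
\[
\M{T}\Smash\I{U}=0 \quad\Longleftrightarrow\quad T\mycomp\cap U \text{ is infinite},
\]
proved via Shapiro's lemma and an Ext-vanishing argument over the $P$-algebra $\alg{T\mycomp\cap U}$. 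On an antichain this yields exactly your biconditional. Your sketch supplies neither the testers, nor the antichain, nor this lemma, and those three ingredients are the entire content of Stage~1.
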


This is proved as Theorem~\ref{thm-big}; see also the discussion
at the end of Section~\ref{sec-more-bousfield}.  The Bousfield lattice
has a largest element $\bousfield{\Alg}$, a smallest element
$\bousfield{0}$, and a unique minimum nonzero element
$\bousfield{\I{\N}}$.  We construct a sublattice $\latt$ so that given
$\bousfield{\I{\N}} \leq \bousfield{X} < \bousfield{Y} \leq
\bousfield{\Alg}$ in $\latt$, there is an uncountable antichain in the
interval between $\bousfield{X}$ and $\bousfield{Y}$ in $\latt$.  The
various joins of these elements form a subset of $\B$ of cardinality
$2^{2^{\aleph_{0}}}$.

The minimality of the class $\bousfield{\I{\N}}$ has some interesting
consequences, including a proof of the telescope conjecture for the
category $\derived{\Alg}$ -- see Section~\ref{sec-minimal}.

The derived category of this ring has other interesting features; for
example, we have the following, which is proved below as
Theorem~\ref{thm-tensor}.

\begin{maintheorem}\label{thm-tensor-height}
For any integer $n \geq 1$, there is an object $X_{n}$ in
$\derived{\Alg}$ so that the $n$-fold derived tensor product of
$X_{n}$ with itself is nonzero, while the $(n+1)$-fold derived tensor
product is zero.
\end{maintheorem}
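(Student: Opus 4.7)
The plan is to construct, for each $n \geq 1$, a specific object $X_{n} \in \derived{\Alg}$ and compute its iterated derived tensor products explicitly. Such objects cannot exist in $\derived{\Alg_{m}}$ for finite $m$ by Neeman's theorem, so the construction must genuinely exploit the countably many generators of $\Alg$.

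My overall strategy is to take $X_{n} = Y_{0} \oplus Y_{1} \oplus \cdots \oplus Y_{n-1}$, a direct sum of $n$ chosen objects $Y_{j} \in \derived{\Alg}$, each of which is \emph{square-zero} in the sense $Y_{j} \ltensor{\Alg} Y_{j} = 0$, and such that the \emph{mixed product} $Y_{0} \ltensor{\Alg} Y_{1} \ltensor{\Alg} \cdots \ltensor{\Alg} Y_{n-1}$ is nonzero. Granting these two properties, both conclusions of the theorem fall out by expansion and pigeonhole. On the one hand, the $n$-fold derived tensor power $X_{n}^{\otimes n}$ is a direct sum of all tensor products $Y_{j_{1}} \ltensor{\Alg} \cdots \ltensor{\Alg} Y_{j_{n}}$ with $(j_{1}, \ldots, j_{n}) \in \{0, \ldots, n-1\}^{n}$, and contains the nonzero summand indexed by the identity permutation. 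On the other hand, every summand of the $(n+1)$-fold power $X_{n}^{\otimes (n+1)}$ is a tensor of $n+1$ factors drawn from only $n$ distinct objects, hence by pigeonhole contains some $Y_{j}^{\otimes 2} = 0$ as a subfactor and therefore vanishes.

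The main obstacle is the construction of a suitable family $\{Y_{j}\}_{j=0}^{n-1}$. My plan here is to partition $\N$ into $n$ disjoint infinite blocks $S_{0}, \ldots, S_{n-1}$ and to take $Y_{j}$ to be a specific object built from the variables $\{x_{i} : i \in S_{j}\}$ alone --- a natural candidate is a block-level analogue of the minimum-class object $\I{\N}$, for example something closely related to an $\I{S_{j}}$. Non-vanishing of the mixed product $Y_{0} \ltensor{\Alg} \cdots \ltensor{\Alg} Y_{n-1}$ should then follow from the disjoint-variables principle: if $M$ and $N$ are $\Alg$-modules each of which is free over the sub-truncated-polynomial-algebra generated by the variables in the other's support, then $M \ltensor{\Alg} N$ degenerates to the underived tensor product, which is nonzero because the $S_{j}$ are disjoint.

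The hard part will be verifying the square-zero property of each $Y_{j}$. For objects whose Bousfield class is a block-level analogue of the minimum class $\bousfield{\I{\N}}$, one has $\bousfield{Y_{j}^{\otimes 2}} \leq \bousfield{Y_{j}}$, so the tensor square is forced to be either Bousfield-equivalent to $Y_{j}$ itself (idempotence) or to zero. Ruling out the idempotence alternative requires an explicit homological computation with the definition of the $Y_{j}$; this is the technical heart of the proof and is precisely where I expect the structural facts about the sublattice $\latt$ and its antichains to be decisive, supplying a large enough supply of pairwise ``independent'' block-local objects with the needed tensor behaviour.
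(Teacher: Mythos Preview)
Your overall strategy---write $X_n$ as a direct sum $Y_0 \oplus \cdots \oplus Y_{n-1}$ of square-zero objects whose $n$-fold mixed product is nonzero, then expand and apply pigeonhole---is exactly the paper's approach. The paper takes pairwise disjoint infinite sets $S_1, \ldots, S_n \subseteq \N$ and sets $Y_j = \I{S_j} \otimes \M{S_j\mycomp}$ (that is, $\module{S_j\mycomp, \emptyset, S_j}$ in the language of Notation~\ref{notation-M}).

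Two points in your outline need sharpening. First, the naive candidate $Y_j = \I{S_j}$ does \emph{not} work: by Proposition~\ref{prop-I-smash-I} one has $\I{S} \Smash \I{T} = 0$ whenever either set is infinite, so the mixed product would already vanish. Your own ``disjoint-variables principle'' is the cure---you need $Y_j$ free over $\alg{S_j\mycomp}$, and $\I{S_j}$ is not (the complementary variables act trivially on it). Tensoring with $\M{S_j\mycomp}$ repairs this, and then the derived tensor over $\Alg \cong \alg{S_j} \otimes \alg{S_j\mycomp}$ factors: for disjoint $S,T$ one obtains $(\I{S} \otimes \M{S\mycomp}) \Smash (\I{T} \otimes \M{T\mycomp}) \cong \I{S \cup T} \otimes \M{(S \cup T)\mycomp}$, so the full mixed product is $\I{S_1 \cup \cdots \cup S_n} \otimes \M{(S_1 \cup \cdots \cup S_n)\mycomp} \neq 0$. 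Second, you have the difficulty located in the wrong place. The square-zero property is the \emph{easy} half: the same factorization gives $Y_j \Smash Y_j \cong (\I{S_j} \ltensor{\alg{S_j}} \I{S_j}) \otimes \M{S_j\mycomp}$, and the first factor vanishes by (the $\alg{S_j}$-analogue of) Proposition~\ref{prop-I-smash-I}; equivalently, apply Lemma~\ref{lemma-smash-MIT} directly. No Bousfield-minimality argument, no antichain combinatorics, and nothing about the sublattice $\latt$ is used anywhere in the proof.
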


In fact, the object $X_{n}$ may be taken to be a $\Alg$-module.

We point out that the operations $\oplus$ and $\ltensor{R}$ on
$\derived{R}$ induce operations on Bousfield classes, and it is easy
to check that $\bousfield{X} \oplus \bousfield{Y}$ is the least upper
bound, or join, of $\bousfield{X}$ and $\bousfield{Y}$; this holds
more generally for a direct sum indexed by any set.  The tensor
product operation is more complicated.  In the noetherian case, one
can use Neeman's result to show that $\bousfield{X} \ltensor{R}
\bousfield{Y}$ is the greatest lower bound, or meet, of
$\bousfield{X}$ and $\bousfield{Y}$, but
Theorem~\ref{thm-tensor-height} shows that this need not be true in the
non-noetherian case.

We also have a nilpotence theorem (Theorem~\ref{thm-nilp}), a thick
subcategory theorem (Corollary~\ref{cor-thick}), and a periodicity
theorem (Proposition~\ref{prop-periodicity}), \`a la Hopkins and Smith
\cite{hopkins-smith, hopkins;global}.

\subsection{Philosophy}

A key idea behind this work is that the derived category $\derived{R}$
of a commutative ring $R$ is a stable homotopy category in the sense
of \cite{axiomatic}, and so shares formal properties with the stable
homotopy category of spectra.  One can take this in two different
directions: one can take results (or questions) from stable homotopy
theory and apply them (or try to answer them) in $\derived{R}$.  This
can produce results which are interesting on their own, in that they
shed some light on the structure of the category $\derived{R}$ and
hence into the ring $R$ and its modules.
Theorem~\ref{thm-tensor-height} is an example, as is the nilpotence
theorem ~\ref{thm-nilp}.

One can also try to use derived categories as test cases for questions
and conjectures about the stable homotopy category of spectra.  Neeman
\cite{neeman;derived} has explored the derived category of a
commutative noetherian ring $R$; his work gives us a pretty clear
picture of what $\derived{R}$ looks like through the lens of stable
homotopy theory.  One might summarize Neeman's results by saying that
the prime ideal spectrum of $R$ governs much of the global structure
of the derived category.  More generally, if one has any stable
homotopy category in which ``the homotopy groups of the sphere'' forms
a noetherian ring $R$, one can investigate the extent to which
Neeman's results still hold, which is to say, how similar the given
stable homotopy category is to $\derived{R}$; this is done, in part,
in \cite[Chapter 6]{axiomatic}.

In the usual stable homotopy category of spectra, though, the homotopy
groups of the sphere form a non-noetherian ring, so Neeman's work is
not as relevant.  So part of the motivation for this paper is that, in
order to gain insight into spectra, one should study the derived
categories of non-noetherian rings from the stable homotopy theoretic
point of view.  Furthermore, since the prime ideal spectrum of the
stable homotopy groups of spheres is pretty small -- when working
$p$-locally, there are only two prime ideals in $\pi_{*}(S^{0})$ --
one should study non-noetherian rings with few prime ideals; hence the
ring $\Alg$ under consideration here.  One might hope that results
about $\derived{\Alg}$ might provide insight into part of the
stable homotopy category of spectra, or at least into part of the
Bousfield lattice for that category.

The paper \cite{hovey-palmieri;bousfield} asks a number of questions
about the Bousfield lattice for the category of spectra, but at least
some of those questions make sense in any stable homotopy category.
So one goal of this paper was to try to answer those questions.  This
has been somewhat successful; in this paper we settle the ``retract
conjecture'' \cite[Conjecture 3.12]{hovey-palmieri;bousfield} -- we
use Theorem~\ref{thm-tensor-height} to show in
Corollary~\ref{cor-retract} that this fails in $\derived{\Alg}$ -- and
the ``Dichotomy conjecture'' \cite[Conjecture
7.5]{hovey-palmieri;bousfield} -- we show in Section~\ref{sec-minimal}
that this holds in $\derived{\Alg}$.  Many of the other conjectures
deal with specific spectra for which there are no good analogues in
$\derived{\Alg}$, and so those are not relevant.  Of the relevant
ones, the main outstanding one is \cite[Conjecture
9.1]{hovey-palmieri;bousfield}: is every localizing subcategory of
$\derived{\Alg}$ a Bousfield class?  (Localizing subcategories are
defined in Definition~\ref{defn-thick}.)  Questions involving
localizing subcategories seem to be hard; for example, it is not even
known whether there is a set of localizing subcategories.  It would be
nice to have more information about this conjecture in the category
$\derived{\Alg}$, or in derived categories in general.  We also
mention two other questions about the derived category of a
commutative ring $R$: Question~\ref{question-bousfield} below asks
whether any object in $\derived{R}$ is Bousfield equivalent to an
$R$-module.  Question~\ref{question-set} asks whether the Bousfield
lattice $\B (R)$ for $\derived{R}$ always forms a set.

\subsection{Organization}

The structure of the paper is as follows: in
Section~\ref{sec-notation}, we set up some basic notation for use
throughout the rest of the paper; in particular, for any subset $S$ of
$\N = \{1,2,3,\dotsc\}$, we examine the $\Alg$-module
\[
\M{S} := k[x_{i} : i \in S] / (x_{i}^{n_{i}})
\]
and its vector space dual $\I{S}$.  A homotopy theorist might view
$\M{S}$ as a sort of Moore spectrum, at least if $n_{i}=2$ for each
$i$, while $\I{S}$ is its Brown-Comenetz dual.  In
Section~\ref{sec-shapiro}, we set up some homological algebra, and we
use it to prove that $\M{T} \Smash \I{U}=0$ if and only if $T\mycomp
\cap U$ is infinite.  In Section~\ref{sec-bousfield}, we use this
result to find the relationships among the Bousfield classes for these
objects; for example, we show that if $S \subseteq T$ then
$\bousfield{\M{S}} \leq \bousfield{\M{T}}$ and $\bousfield{\I{T}} \leq
\bousfield{\I{S}}$.  We use these kinds of computations to prove
Theorem~\ref{thm-uncountable}, and we also show that $\I{S} \Smash
\I{S}=0$ whenever $S$ is infinite.

In Section~\ref{sec-more-bousfield}, we examine other objects
constructed from $\M{S}$ and $\I{T}$, and discuss the sublattice
$\latt$ discussed above.  We prove Theorem~\ref{thm-tensor-height} in
Section~\ref{sec-tensor-height}.  We explore the minimality of
$\bousfield{\I{N}}$, and some related results, in
Section~\ref{sec-minimal}.  Finally, in Section~\ref{sec-nilp} we
prove a nilpotence theorem, a thick subcategory theorem, and a
periodicity theorem.

We point out that Neeman \cite{neeman;oddball} has considered a
similar situation: he studied the Bousfield classes in the derived
category of the ring
\[
k[x_{2}, x_{3}, x_{4}, \dots] / (x_{2}^{2}, x_{3}^{3}, x_{4}^{4}, \dots),
\]
where $k$ is a field.  He showed that there were at least
$2^{2^{\aleph_{0}}}$ Bousfield classes for this ring as well.  His
methods were a bit different, and his goal was to demonstrate the size
of the Bousfield lattice, not to investigate its finer properties, as
we are trying to do here.

Acknowledgments: Mark Hovey, James Zhang, and Paul Smith have provided
valuable input into various aspects of this research.

\section{Notation and basic algebra}\label{sec-notation}

All rings in this paper are graded, as are all modules over them.  The
\emph{suspension} or \emph{shift} functor $\Sigma$ on graded modules
is defined as follows: given a graded module $M=\bigoplus_{m} M_{m}$,
the module $\Sigma M$ has $m$th graded piece $M_{m-1}$.  Iterating
this, $\Sigma^{j} M$ has $m$th graded piece $M_{m-j}$ for any integer
$j$.

All chain complexes in this paper are bigraded, with the first grading
the homological one, and the second grading induced by that on the
modules.  The differential on such a complex will always have degree
$(1,0)$.  Given integers $i$ and $j$, the $(i,j)$-\emph{suspension}
functor $\Sigma^{i,j}$ is defined as follows: given a graded ring $R$
and a chain complex $X=(\dots \rightarrow X^{n} \rightarrow X^{n+1}
\rightarrow \dots)$ of graded $R$-modules, $\Sigma^{i,j}X$ is the
chain complex with $n$th term $\Sigma^{j} X^{n-i}$.  (Note that
$\Sigma$ with a single grading means the shift functor on modules,
while with a double grading it is the shift functor on complexes.)

Fix a countable field $k$ and a sequence of integers $n_{i} \geq 2$
for $i \geq 1$.  As in \eqref{eqn-trunc}, we define the $k$-algebra
$\Alg$ by
\[
\Alg = k[x_{1}, x_{2}, x_{3}, \dotsc] / (x_{i}^{n_{i}} \ \text{for all} \ i)
\]
with $\deg x_{i}=2^{i}$.  The unbounded derived category
$\derived{\Alg}$ is a stable homotopy category, and more precisely a
monogenic stable homotopy category, and since $k$ is countable, a
Brown category; see \cite[Definitions 1.1.4 and 4.1.4]{axiomatic} for
the definitions and \cite[Section 9.3]{axiomatic} for the proof that
$\derived{\Alg}$ has these properties.  From the stable homotopy
theoretic point of view, the derived tensor product is the \emph{smash
product} in $\derived{\Alg}$, and the rank one free module $\Alg$ is
the \emph{sphere object}.

\begin{notation}\label{notation-MIT}
Let $\N$ be the set of positive integers and let $S$ be a subset of
$\N$.  Let $\alg{S}$ be the subalgebra of $\Alg$ generated by
$x_{i}$ with $i \in S$; we also write $S\mycomp$ for the complement of
$S$ in $\N$.
\end{notation}

Then for any $S \subseteq \N$ there is an algebra isomorphism $\Alg
\cong \alg{S} \otimes \alg{S\mycomp}$,\footnote{Any unadorned
tensor product in this paper is over the base field $k$.}
which leads to two algebra homomorphisms: the inclusion map
$\alg{S} \rightarrow \Alg$ and the projection map $\Alg
\rightarrow \alg{S}$.  These, respectively, lead to two useful
functors: the \emph{restriction} functor $\restrict{-}{S}:
\Alg\text{-}\Mod \rightarrow \alg{S}\text{-}\Mod$, and the
\emph{inclusion} functor $\incl{S}{}: \alg{S}\text{-}\Mod
\rightarrow \Alg\text{-}\Mod$.  More generally, if $S \subseteq T
\subseteq \N$, we have functors
\begin{gather*}
\restrict{-}{S}^{T}: \alg{T}\text{-}\Mod \rightarrow \alg{S}\text{-}\Mod,\\
\incl{S}{T}: \alg{S}\text{-}\Mod \rightarrow \alg{T}\text{-}\Mod.
\end{gather*}
These also induce functors on the corresponding derived categories; we
write those as $\restrict{-}{S}^{T}$ and $\incl{S}{T}$ as well.  For
any object $X$ in $\derived{\alg{S}}$, we have
$\restrict{(\incl{S}{T}X)}{S}^{T} \cong X$.

\begin{notation}
Fix $S \subseteq T \subseteq \N$.
\begin{enumerate}
\item We make $\M{S}$ into a $\alg{T}$-module by applying the
inclusion functor $\incl{S}{T}$ to a rank one free $\alg{S}$-module;
then we have isomorphisms of $\alg{T}$-modules
\[
\M{S} \cong \alg{T} /(x_{i} : i \not \in S) \cong \alg{T}
\otimes_{\alg{T-S}} k.
\]
Note that this makes $\M{S}$ into a ring object in
$\derived{\alg{T}}$.
\item As a $\alg{T}$-module, let $\I{S} = \Hom_{k}^{*}(\M{S},k)$
be the graded $k$-dual of $\M{S}$, concentrated in non-positive
degrees.
\end{enumerate}
\end{notation}

Note that if $S$ is finite, then $\I{S}$ is isomorphic to $\M{S}$, up
to suspension.

\section{Shapiro's lemma and Brown-Comenetz duality}\label{sec-shapiro}

In this section, we state Shapiro's lemma for $\derived{\Alg}$, we
define Brown-Comenetz duality in this setting, and we combine these
two concepts to compute some tensor products.
Lemma~\ref{lemma-smash}, which says that $\M{T} \Smash \I{U}=0$
if and only if $T\mycomp \cap U$ is infinite, is used throughout
the paper.

We start with some basic homological algebra.

\begin{lemma}[Shapiro's lemma]
\label{lemma-shapiro}
Fix a subset $S$ of $\N$.
\begin{enumerate}
\item \textup{[Homology]}
For any object $X$ in $\derived{\Alg}$, there is an isomorphism 
\[
X \Smash \M{S\mycomp} \cong \incl{S}{}\left(\restrict{X}{S}
\ltensor{\alg{S}} k\right).
\]
\item \textup{[Cohomology]} For any object $X$ in $\derived{\Alg}$,
there is an isomorphism 
\[
\Hom_{\derived{\Alg}}(\M{S\mycomp}, X)^{*} \cong
\Hom_{\derived{\alg{S}}}^{*} (k,\restrict{X}{S}).
\]
\end{enumerate}
\end{lemma}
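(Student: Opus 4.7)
The plan is to reduce both statements to standard change-of-rings isomorphisms, after recognizing $\M{S\mycomp}$ as an induced module. The algebra decomposition $\Alg \cong \alg{S} \otimes \alg{S\mycomp}$ provides an identification $\M{S\mycomp} \cong \Alg \otimes_{\alg{S}} k$, where $k$ is the trivial $\alg{S}$-module coming from the augmentation $\alg{S} \to k$. Because any $k$-basis of $\alg{S\mycomp}$ is also a right $\alg{S}$-basis of $\Alg$, the ring $\Alg$ is free, hence flat, over $\alg{S}$, so the ordinary tensor product here agrees with the derived one: $\M{S\mycomp} \simeq \Alg \ltensor{\alg{S}} k$ in $\derived{\Alg}$.

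For the homology statement, I would apply associativity of the derived tensor product:
\[
X \Smash \M{S\mycomp} \simeq X \ltensor{\Alg} \bigl( \Alg \ltensor{\alg{S}} k \bigr) \simeq \restrict{X}{S} \ltensor{\alg{S}} k,
\]
where the last step uses that restriction along $\alg{S} \hookrightarrow \Alg$ sends $X$ to $\restrict{X}{S}$. The resulting complex naturally lives in $\derived{\alg{S}}$, and the functor $\incl{S}{}$ on the right-hand side of the lemma formally repackages it as an object of $\derived{\Alg}$ via the projection $\Alg \to \alg{S}$.

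For the cohomology statement, I would use the derived tensor-Hom adjunction for the ring map $\alg{S} \to \Alg$:
\[
\RHom_{\Alg}\bigl( \Alg \ltensor{\alg{S}} k, X \bigr) \simeq \RHom_{\alg{S}}\bigl( k, \restrict{X}{S} \bigr).
\]
Substituting $\M{S\mycomp}$ for $\Alg \ltensor{\alg{S}} k$ on the left and passing to graded morphism groups in each derived category yields the asserted isomorphism. The main technical work in both parts lies in the initial identification $\M{S\mycomp} \simeq \Alg \ltensor{\alg{S}} k$ together with the flatness of $\Alg$ over $\alg{S}$; once those are in hand, what remains is a formal manipulation parallel to the classical Shapiro's lemma for group rings.
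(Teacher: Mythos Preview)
Your proof is correct and follows essentially the same route as the paper: identify $\M{S\mycomp}$ with $\Alg \otimes_{\alg{S}} k$ (using flatness of $\Alg$ over $\alg{S}$ to pass to the derived tensor), then apply associativity of the derived tensor product for part~(a) and the derived tensor--Hom adjunction for part~(b). The paper's proof is in fact terser than yours---it simply calls both parts ``standard'' and records the chain of isomorphisms for~(a)---so your version spells out the justifications more carefully.
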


\begin{proof}
Both parts are standard.  For part (a), for example, we have
\[
X \ltensor{\Alg} \M{S\mycomp} \cong
X \ltensor{\Alg} (\restrict{\Alg}{S} \underset{\alg{S}}
{\otimes} k) \cong X \ltensor{\Alg} (\restrict{\Alg}{S}
\ltensor{\alg{S}} k) \cong \incl{S}{}\left(\restrict{X}{S}
\ltensor{\alg{S}} k\right).
\]
Part (b) is similar.
\end{proof}

Here is one Ext calculation for later use.

\begin{lemma}\label{lemma-ext}
Fix $T \subseteq S \subseteq \N$.  If $T$ is infinite, then for any
set $J$,
\[
\Ext_{\alg{S}}^{q}(k,\bigoplus_{J} \M{T})=0 \quad \text{for all}
\quad  q \geq 0.
\]
\end{lemma}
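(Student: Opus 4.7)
The plan is to use Shapiro's lemma to reduce the Ext computation to one over a finite Frobenius subalgebra, where it can be computed explicitly via Künneth, and then to kill the answer in each internal degree by a Mittag--Leffler / limit argument exploiting the infinitude of $T$.

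First I would realize $k$ as the filtered colimit $k = \colim_{F}\M{S \setminus F}$ in $\alg{S}$-modules, where $F$ ranges over finite subsets of $S$ and the connecting maps are the natural quotients $\M{S \setminus F} \twoheadrightarrow \M{S \setminus F'}$ for $F \subseteq F'$. Since $S \subseteq \N$ is countable, I can pass to a cofinal $\omega$-chain $F_{1} \subset F_{2} \subset \cdots$ arranged so that the intersections $A_{n} := F_{n} \cap T$ exhaust $T$. Applied to an injective resolution of $\bigoplus_{J}\M{T}$, the countable Milnor short exact sequence then gives
\[
0 \to {\lim_{n}}^{1}\Ext^{q-1}_{\alg{S}}\bigl(\M{S \setminus F_{n}}, \textstyle\bigoplus_{J}\M{T}\bigr) \to \Ext^{q}_{\alg{S}}\bigl(k, \textstyle\bigoplus_{J}\M{T}\bigr) \to \lim_{n}\Ext^{q}_{\alg{S}}\bigl(\M{S \setminus F_{n}}, \textstyle\bigoplus_{J}\M{T}\bigr) \to 0,
\]
reducing the problem to making both flanking terms vanish.

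For each finite $F \subseteq S$ one has $\M{S \setminus F} \cong \alg{S} \otimes_{\alg{F}} k$, and $\alg{S}$ is free over $\alg{F}$; the cohomology form of Shapiro's lemma (Lemma~\ref{lemma-shapiro}(b), applied to $\alg{F} \subseteq \alg{S}$) identifies
\[
\Ext^{q}_{\alg{S}}\bigl(\M{S \setminus F}, \textstyle\bigoplus_{J}\M{T}\bigr) \cong \Ext^{q}_{\alg{F}}\bigl(k, \textstyle\bigoplus_{J}\restrict{\M{T}}{\alg{F}}\bigr).
\]
Writing $A = F \cap T$ and $B = F \setminus T$, we have $\alg{F} = \alg{A} \otimes \alg{B}$ and, unpacking the $\alg{F}$-action, $\restrict{\M{T}}{\alg{F}} \cong \M{A} \otimes \alg{T \setminus A}$, with $\alg{F}$ acting on the first factor through $\alg{A}$ and trivially on the second. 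Since $\alg{F}$ is finite (hence noetherian) and $k$ is finitely presented over it, $\Ext^{*}_{\alg{F}}(k,-)$ commutes with direct sums and with tensoring by $k$-vector spaces in the second slot. Applying the Künneth formula for Ext to the finite tensor product $\alg{A} \otimes \alg{B}$, and using that $\alg{A}$ is Frobenius (so $\Ext^{*}_{\alg{A}}(k, \alg{A})$ is a single copy of $k$ placed in homological degree $0$ and internal degree $d_{A} := \sum_{i \in A}(n_{i}-1)\deg x_{i}$), collapses the calculation to
\[
\Ext^{q}_{\alg{F}}(k, \M{A}) \cong \Sigma^{d_{A}}\Ext^{q}_{\alg{B}}(k, k),
\]
which as a graded $k$-vector space is concentrated in internal degrees $\geq d_{A}$.

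Because $T$ is infinite and $A_{n}$ exhausts $T$, the numbers $d_{A_{n}}$ tend to infinity. For any fixed internal degree $d$, once $d_{A_{n}} > d$ the whole group $\Ext^{q}_{\alg{S}}(\M{S \setminus F_{n}},\bigoplus_{J}\M{T})_{d}$ vanishes; the inverse system is then eventually zero in degree $d$, so both $\lim_{n}$ and ${\lim_{n}}^{1}$ vanish there, and the Milnor sequence forces $\Ext^{q}_{\alg{S}}(k,\bigoplus_{J}\M{T})_{d} = 0$ for every $d$ and $q$. The main point requiring care is the third paragraph above: correctly identifying $\restrict{\M{T}}{\alg{F}}$ as $\M{A} \otimes \alg{T \setminus A}$ with the trivial action on the second tensor factor, and invoking Künneth for Ext in the finite tensor product $\alg{A} \otimes \alg{B}$, which is legitimate because each factor is finite-dimensional and $k$ is finitely presented over each; everything else is bookkeeping and the final degree count is routine.
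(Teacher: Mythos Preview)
Your degree claim in the third paragraph is wrong for $q\ge 1$ whenever $B=F\setminus T\neq\emptyset$. You assert that $\Sigma^{d_A}\Ext^{q}_{\alg{B}}(k,k)$ is concentrated in internal degrees $\ge d_A$, which would require $\Ext^{q}_{\alg{B}}(k,k)$ to live in non\-negative internal degrees. In fact it lives in non\-positive ones: in the minimal free resolution of $k$ over the connected graded algebra $\alg{B}$, the generators of $P_q$ sit in strictly positive internal degree for $q\ge 1$, so $\Hom^{*}_{\alg{B}}(P_q,k)$, and hence $\Ext^{q}_{\alg{B}}(k,k)$, is supported in strictly negative internal degrees. (For $R=k[x]/(x^{m})$ with $\deg x>0$, the extension $0\to\Sigma^{\deg x}k\to R\to k\to 0$ gives a class in $\Ext^{1}_{R}(k,k)$ of internal degree $-\deg x$.) Consequently $\Ext^{q}_{\alg{F}}(k,\M{A})$ is concentrated in degrees $\le d_A$, not $\ge d_A$; after tensoring with the unbounded\-above graded vector space $\bigoplus_{J}\alg{T\setminus A}$ you have no lower bound at all, and the assertion that $\Ext^{q}_{\alg{S}}(\M{S\setminus F_n},\bigoplus_{J}\M{T})_{d}$ vanishes once $d_{A_n}>d$ is unjustified. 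Your argument is fine for $q=0$, and for all $q$ in the special case $S=T$ (since then every $B_n=\emptyset$ and the Frobenius step already kills $q>0$), but not in general.

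The paper's proof sidesteps this by first collapsing the $S$--direction with the change\-of\-rings spectral sequence for the extension $\alg{T}\to\alg{S}\to\alg{S-T}$, reducing the problem to showing $\Ext^{q}_{\alg{T}}(k,\bigoplus_{J}\M{T})=0$; that vanishing is then obtained from the fact that $\alg{T}$ is a $P$\nobreakdash-algebra in Margolis's sense (so the bounded\-below free module $\bigoplus_{J}\M{T}$ is injective, handling $q>0$) together with an elementary socle check for $q=0$. Your Milnor/Shapiro/Frobenius idea would give an alternative, self\-contained proof of that inner vanishing over $\alg{T}$, but as written it does not cover $T\subsetneq S$; the cleanest repair is to insert the same change\-of\-rings reduction to $S=T$ before running your limit argument.
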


\begin{proof}
We compute the Ext groups in question with the spectral sequence
associated to the central algebra extension
\[
k \rightarrow \alg{T} \rightarrow \alg{S} \rightarrow \alg{T-S}
\rightarrow k.
\]
For any $\alg{S}$-module $N$, there is a strongly convergent
spectral sequence \cite[Theorem XVI.6.1]{cartan-eilenberg}
\[
\Ext_{\alg{S-T}}^{p} (k, \Ext_{\alg{T}}^{q}(k,N)) \Rightarrow
\Ext_{\Alg}^{p+q} (k,N).
\]
We apply this with $N=\bigoplus \M{T}$.  We claim that the groups 
\begin{equation}\label{eqn-ext}
\Ext_{\alg{T}}^{q}(k,\bigoplus_{J} \M{T})
\end{equation}
are zero for all $q$.

Since $T$ is infinite, then $\alg{T}$ is what Margolis calls a
$P$-algebra \cite[Chapter 13]{margolis}.  Therefore by \cite[Theorem
13.12]{margolis}, any free bounded below $\alg{T}$-module is also
injective.  In particular, $\bigoplus \M{T}$ is injective, so the
above Ext group \eqref{eqn-ext} is zero when $q>0$.

Also because $T$ is infinite, there are no $\alg{T}$-module maps
$k \rightarrow \bigoplus \M{T}$, and we see that the above Ext group
\eqref{eqn-ext} is zero when $q=0$.
\end{proof}

Now we move on to Brown-Comenetz duality.  See \cite{brown-comenetz}
and \cite{ravenel;localization} for some information about this
construction in stable homotopy.  The analogue here is vector space
duality: for any object $Y$ in $\derived{\Alg}$, we define its
\emph{Brown-Comenetz dual} to be the object $\bcdual{Y} = \RHom_{k}
(Y,k)$ in $\derived{\Alg}$.  For example, the
Brown-Comenetz dual of $\M{S}$ is $\bcdual{\M{S}}=\I{S}$.

\begin{lemma}\label{lemma-bcdual}
For any object $X$ in $\derived{\Alg}$, we have 
\[
\Hom_{k}^{*} (H^{*}(X \Smash Y), k) \cong
\Hom_{\derived{\Alg}}^{*}(X, \bcdual{Y}).
\]
Thus if $Y$ is locally finite and 
$\RHom_{\Alg}^{*}(X,Y) = 0$, then $X \Smash
\bcdual{Y} = 0$.
\end{lemma}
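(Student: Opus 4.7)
The plan is to derive the stated isomorphism from the standard tensor-hom adjunction, and then deduce the vanishing statement by feeding that isomorphism into itself with $Y$ replaced by $\bcdual{Y}$, using that Brown-Comenetz duality squares to the identity on locally finite objects.

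First I would set up the ordinary (underived) adjunction: for $\Alg$-modules $P$ and $Y$, one has a natural isomorphism
\[
\Hom_{k}(P \otimes_{\Alg} Y,\, k) \cong \Hom_{\Alg}(P,\, \Hom_{k}(Y,k)),
\]
with the $\Alg$-action on the right-hand side coming from the action on $Y$. Deriving both sides, i.e.\ replacing $P$ by a K-projective resolution of $X$ and $Y$ by a K-injective resolution, produces an isomorphism
\[
\RHom_{k}(X \Smash Y,\, k) \cong \RHom_{\Alg}(X,\, \bcdual{Y})
\]
in $\derived{k}$. Applying $H^{*}$, the right-hand side becomes $\Hom_{\derived{\Alg}}^{*}(X, \bcdual{Y})$. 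On the left, because $k$ is a field the functor $\Hom_{k}(-,k)$ is exact on complexes of $k$-vector spaces, so $\RHom_{k}(C,k) \simeq \Hom_{k}(C,k)$ for any such $C$; combined with the universal coefficient identification $H^{n}\Hom_{k}(C,k) \cong \Hom_{k}(H^{-n}(C),k)$, this rewrites the left-hand side as $\Hom_{k}^{*}(H^{*}(X \Smash Y),k)$, yielding the claimed isomorphism.

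For the second assertion, I would substitute $\bcdual{Y}$ in place of $Y$ in the isomorphism just proved, obtaining
\[
\Hom_{k}^{*}(H^{*}(X \Smash \bcdual{Y}), k) \cong \Hom_{\derived{\Alg}}^{*}(X,\, \bcdual{\bcdual{Y}}).
\]
The local finiteness hypothesis on $Y$ is used here to ensure that the natural evaluation map $Y \to \bcdual{\bcdual{Y}}$ is a quasi-isomorphism, since finite-dimensional $k$-vector space duality is an exact involution applied degreewise. Hence the right-hand side equals $\Hom_{\derived{\Alg}}^{*}(X,Y) = \RHom_{\Alg}^{*}(X,Y)$, which vanishes by hypothesis. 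Since taking $k$-duals detects vanishing of a graded $k$-vector space, I conclude $H^{*}(X \Smash \bcdual{Y}) = 0$, and therefore $X \Smash \bcdual{Y} = 0$ in $\derived{\Alg}$.

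The argument is entirely formal, and the only place requiring attention is the local finiteness hypothesis: it is exactly what guarantees $\bcdual{\bcdual{Y}} \simeq Y$ and so allows $\RHom_{\Alg}^{*}(X,Y)$ to be substituted for $\Hom_{\derived{\Alg}}^{*}(X, \bcdual{\bcdual{Y}})$ in the final step. Without local finiteness, the first isomorphism remains valid but the implication need not follow.
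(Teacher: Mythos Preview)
Your proof is correct and follows essentially the same route as the paper's: derive the tensor-hom adjunction to obtain the first isomorphism, then apply it with $\bcdual{Y}$ in place of $Y$ and invoke the double-dual identification $\bcdual{\bcdual{Y}} \simeq Y$ for locally finite $Y$ to conclude. Your write-up is in fact a bit more careful than the paper's about the derived-category bookkeeping (K-projective resolutions, exactness of $\Hom_k(-,k)$ over a field), but the argument is the same.
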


\begin{proof}
The first statement follows from tensor-hom adjointness: in the
category of $\Alg$-modules, we have
\[
\Hom_{\Alg} (X, \bcdual{Y}) = \Hom_{\Alg} (X, \Hom_{k}(Y,k))
\cong \Hom_{k} (X \tensor{\Alg} Y, k).
\]
This induces the above isomorphism in the derived category.  

The second statement now follows: since $Y$ is locally finite, it is
self-double-dual: $I(\bcdual{Y}) \cong Y$.  First, 
$\RHom_{\derived{\Alg}}^{*}(X,Y) = 0$ if and only if
$\Hom_{\derived{\Alg}}^{*}(X,I(\bcdual{Y}))=0$, and then by the first 
part, we have
\[
\Hom_{k} (H^{*}(X \Smash \bcdual{Y}), k) = 0.
\]
This implies that $H^{*}(X \Smash \bcdual{Y}) = 0$, which means that
$X \Smash \bcdual{Y}=0$.
\end{proof}

\begin{corollary}\label{cor-bcdual}
In particular, $\RHom_{\Alg}^{*}(X,\M{S}) = 0$ if and
only if $X \Smash \I{S} = 0$.
\end{corollary}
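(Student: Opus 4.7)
The plan is to observe that this corollary is essentially a direct specialization of Lemma~\ref{lemma-bcdual} to $Y = \M{S}$, with the non-iff statement of the lemma upgraded to an iff. First I would check that $\M{S}$ is locally finite: each graded piece is spanned by monomials $\prod x_i^{a_i}$ with $i \in S$ and $0 \le a_i < n_i$, and since $\deg x_i = 2^i$, only finitely many such monomials can have any given total degree. Hence the hypothesis of Lemma~\ref{lemma-bcdual} is satisfied, and since $\bcdual{\M{S}} = \I{S}$ by definition, the ``if'' direction of the corollary (i.e., $\RHom^{*}_{\Alg}(X,\M{S}) = 0 \Rightarrow X \Smash \I{S} = 0$) is immediate.

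For the converse, I would revisit the proof of Lemma~\ref{lemma-bcdual} and point out that each of its implications is reversible. The tensor-hom adjunction gives an \emph{isomorphism} $\Hom^{*}_{\derived{\Alg}}(X, \bcdual{Y}) \cong \Hom_{k}^{*}(H^{*}(X \Smash Y), k)$, so one side vanishes if and only if the other does. For a graded $k$-vector space $V$, the dual $\Hom_{k}^{*}(V,k)$ vanishes if and only if $V = 0$ (because $k$ is a field), so $\Hom_{k}^{*}(H^{*}(X \Smash \I{S}), k) = 0$ is equivalent to $X \Smash \I{S} = 0$. Finally, local finiteness of $\M{S}$ yields $I(\I{S}) \cong \M{S}$, so $\RHom^{*}_{\Alg}(X, \M{S}) = 0$ is equivalent to $\Hom^{*}_{\derived{\Alg}}(X, I(\I{S})) = 0$.

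Chaining these three equivalences gives the corollary in both directions. There is no real obstacle here; the only substantive content beyond the lemma is the observation that the argument of Lemma~\ref{lemma-bcdual} was an iff chain all along, together with the verification that the grading makes $\M{S}$ locally finite.
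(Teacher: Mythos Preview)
Your proposal is correct and follows exactly the approach the paper intends: the corollary is stated without proof because the chain of implications in the proof of Lemma~\ref{lemma-bcdual} is, as you observe, actually a chain of equivalences once one notes that $\M{S}$ is locally finite (so that $I(\I{S})\cong\M{S}$) and that vanishing of a graded $k$-vector space is equivalent to vanishing of its graded dual. Your explicit verification of local finiteness and your unwinding of the iff are precisely what a careful reader is meant to supply.
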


Now we start computing some tensor products.

\begin{lemma}\label{lemma-smash}
Let $T$ and $U$ be subsets of $\N$.  Then $\M{T} \Smash \I{U} = 0$ if
and only if $T\mycomp \cap U$ is infinite.
\end{lemma}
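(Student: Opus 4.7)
The plan is to convert the question into an Ext computation using Brown--Comenetz duality and Shapiro's lemma, and then read off the answer from Lemma~\ref{lemma-ext}. By Corollary~\ref{cor-bcdual}, the equation $\M{T} \Smash \I{U} = 0$ is equivalent to $\Ext^*_{\Alg}(\M{T}, \M{U}) = 0$. Writing $T = (T\mycomp)\mycomp$ and applying Lemma~\ref{lemma-shapiro}(b) with $S = T\mycomp$, this in turn is equivalent to
\[
\Ext^*_{\alg{T\mycomp}}(k,\restrict{\M{U}}{T\mycomp}) = 0.
\]

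The main step is then to identify the $\alg{T\mycomp}$-module structure on $\restrict{\M{U}}{T\mycomp}$. The generators $x_i$ with $i \in T\mycomp \cap U\mycomp$ already act as zero on $\M{U}$, so the action of $\alg{T\mycomp}$ factors through the projection $\alg{T\mycomp} \twoheadrightarrow \alg{T\mycomp \cap U}$. Using the monomial basis of $\M{U}$, I would split off a factor for each monomial in the variables $x_i$ with $i \in T \cap U$: each such monomial generates a submodule isomorphic to $\M{T\mycomp \cap U}$, and together they give an $\alg{T\mycomp}$-module isomorphism
\[
\restrict{\M{U}}{T\mycomp} \;\cong\; \bigoplus_{J} \M{T\mycomp \cap U},
\]
where $J$ indexes the monomials in $\{x_i : i \in T \cap U\}$.

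With this decomposition in hand, the two directions are immediate. If $T\mycomp \cap U$ is infinite, then Lemma~\ref{lemma-ext} (applied with its ``$S$'' equal to $T\mycomp$ and its ``$T$'' equal to $T\mycomp \cap U$) gives $\Ext^q_{\alg{T\mycomp}}(k, \bigoplus_J \M{T\mycomp \cap U}) = 0$ for every $q \geq 0$, so $\M{T} \Smash \I{U} = 0$. Conversely, if $T\mycomp \cap U$ is finite, then the top class $\prod_{i \in T\mycomp \cap U} x_i^{n_i - 1}$ in $\M{T\mycomp \cap U}$ is killed by every generator $x_i$ with $i \in T\mycomp$ (by the truncation relations for $i \in T\mycomp \cap U$, and by the quotient for $i \in T\mycomp \cap U\mycomp$), so sending $1$ to this element defines a nonzero $\alg{T\mycomp}$-linear map $k \to \M{T\mycomp \cap U}$. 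This produces a nonzero element of $\Ext^0_{\alg{T\mycomp}}(k,\restrict{\M{U}}{T\mycomp})$, forcing $\M{T} \Smash \I{U} \neq 0$. The only real obstacle is getting the restriction isomorphism right; everything else is a direct application of results already in hand.
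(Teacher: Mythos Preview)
Your proof is correct and follows essentially the same route as the paper's: reduce via Corollary~\ref{cor-bcdual} and Shapiro's lemma to $\Ext^*_{\alg{T\mycomp}}(k,\restrict{\M{U}}{T\mycomp})$, decompose $\restrict{\M{U}}{T\mycomp}$ as a direct sum of copies of $\M{T\mycomp\cap U}$ indexed by the monomials in the variables $x_i$ with $i\in T\cap U$, and then invoke Lemma~\ref{lemma-ext} in the infinite case and exhibit an explicit nonzero map in the finite case. Your write-up is in fact slightly more careful with the indexing than the paper's (which has a typo, writing $U\cap T$ in a couple of places where $U\cap T\mycomp$ is meant).
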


\begin{proof}
By Corollary~\ref{cor-bcdual}, we have 
\[
\M{T} \Smash \I{U}=0 \quad \Longleftrightarrow \quad 
\Hom_{\derived{\Alg}}^{*}(\M{T},\M{U})=0.
\]
According to Shapiro's lemma \ref{lemma-shapiro}, 
\[
\Hom_{\derived{\Alg}}^{*}(\M{T},\M{U}) 
\cong \Hom_{\derived{\alg{T\mycomp}}}^{*} (k,
\restrict{\M{U}}{T\mycomp}),
\]
and as a $\alg{T\mycomp}$-module, $\M{U}$ is isomorphic to 
\[
\M{U \cap T\mycomp} \otimes \M{U \cap T} \cong \bigoplus \M{U \cap
T\mycomp},
\]
where the direct sum is indexed by a basis for $\M{U \cap T}$, since
$\M{U \cap T}$ is trivial as a $\alg{T\mycomp}$-module.  So we
want to understand whether 
\[
\Hom_{\derived{\alg{T\mycomp}}}^{*} (k, \bigoplus \M{U \cap T})
\]
is zero.  If $U \cap T$ is infinite, then Lemma~\ref{lemma-ext} shows
that this group is zero.  If $U \cap T$ is finite, then $\M{U \cap T}$
is a finite-dimensional vector space, and there are
$\alg{T\mycomp}$-module maps $k \rightarrow \M{U \cap T}$ and thus
nonzero maps $k \rightarrow \bigoplus \M{U \cap T}$; thus this group is
nonzero.
\end{proof}

\section{Beginning Bousfield class computations}\label{sec-bousfield}

Now we use Lemma~\ref{lemma-smash} to get information about the
Bousfield class of $\M{S}$ for subsets $S \subseteq \N$.

We start by constructing some useful exact triangles.  Recall that
$n_{i}$ is the nilpotence height of the generator $x_{i} \in \Alg$.

\begin{lemma}\label{lemma-exact}
Fix subsets $S \subsetneq T \subseteq \N$ and an integer $i \in T-S$.
There are objects $M_{j}$, $1 \leq j \leq n_{i}$, with $M_{1} = \M{S}$
and $M_{n_{i}} \cong \M{S \cup \{i\}}$, and $n_{i}-1$
exact triangles in $\derived{\alg{T}}$
\[
\Sigma^{1,0} \M{S} \rightarrow \Sigma^{0,\deg
x_{i}} M_{j-1} \rightarrow M_{j}, \quad 2 \leq j \leq n_{i}.
\]
\end{lemma}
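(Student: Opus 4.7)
The natural guess is to interpolate between $M_1 = \M{S}$ and $M_{n_i} = \M{S \cup \{i\}}$ by varying the nilpotence height of $x_i$. I would set
\[
M_j := \alg{T}/(x_\ell \suchthat \ell \in T - (S \cup \{i\}),\ x_i^j)
\]
for $1 \le j \le n_i$, regarded as a graded $\alg{T}$-module. Then $M_1 = \alg{T}/(x_\ell \suchthat \ell \in T - S) = \M{S}$, and since $x_i^{n_i}$ already vanishes in $\alg{T}$, $M_{n_i} = \alg{T}/(x_\ell \suchthat \ell \in T - (S \cup \{i\})) = \M{S \cup \{i\}}$, so the endpoints are automatic.

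For each $2 \le j \le n_i$, the key input is a short exact sequence of graded $\alg{T}$-modules
\[
0 \to \Sigma^{0,\deg x_i} M_{j-1} \xrightarrow{\ \mu\ } M_j \xrightarrow{\ \pi\ } \M{S} \to 0,
\]
where $\pi$ is the quotient map killing $x_i$ and $\mu$ is induced by multiplication by $x_i$ (a map of internal degree $\deg x_i$, hence a degree-zero module map after applying $\Sigma^{0,\deg x_i}$ to the source). Using the $k$-vector space decomposition $M_j = \bigoplus_{r=0}^{j-1} x_i^{r}\cdot\M{S}$, one checks directly that $\mu$ is injective with image the principal ideal $(x_i) \subseteq M_j$, and that the cokernel is $M_j/(x_i) \cong \M{S}$.

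Finally, I would convert this into the desired distinguished triangle. Under the paper's convention $(\Sigma^{i,j}X)^n = \Sigma^j X^{n-i}$ one has $H^n(\Sigma^{1,0}X) = H^{n-1}(X)$, so $\Sigma^{1,0}$ is the inverse of the standard cohomological shift $[1]$. A short exact sequence $0 \to A \to B \to C \to 0$ therefore gives a distinguished triangle $A \to B \to C \to \Sigma^{-1,0} A$, and rotating one step backward produces $\Sigma^{1,0} C \to A \to B \to C$. Applied to the sequence above, with $A = \Sigma^{0,\deg x_i} M_{j-1}$, $B = M_j$, and $C = \M{S}$, this is exactly the asserted triangle
\[
\Sigma^{1,0}\M{S} \to \Sigma^{0,\deg x_i} M_{j-1} \to M_j.
\]
The only real obstacle is keeping the bigraded shifts straight; the algebraic content is merely that multiplication by $x_i$ embeds $M_{j-1}$ into $M_j$ with cokernel $\M{S}$.
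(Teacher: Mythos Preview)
Your proof is correct and is essentially the same as the paper's: the paper defines $M_j$ as $\alg{S}$ tensored (over $k$, via $\alg{S\cup\{i\}}\otimes_{\alg{i}}-$) with $k[x_i]/(x_i^{j})$, which is exactly your quotient $\alg{T}/(x_\ell:\ell\in T-(S\cup\{i\}),\,x_i^{j})$, and obtains the same short exact sequences by tensoring the obvious $\alg{i}$-module sequences $0\to\Sigma^{\deg x_i}k[x_i]/(x_i^{j-1})\to k[x_i]/(x_i^{j})\to k\to 0$ with $\alg{S}$. Your explicit handling of the shift convention and the triangle rotation is a welcome addition that the paper leaves implicit.
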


\begin{proof}
There are exact sequences of $\alg{i}$-modules
\begin{gather*}
0 \rightarrow \Sigma^{\deg x_{i}} k \rightarrow k[x_{i}]/(x_{i}^{2})
\rightarrow k \rightarrow 0, \\
0 \rightarrow \Sigma^{\deg x_{i}} k[x_{i}]/(x_{i}^{2}) \rightarrow
k[x_{i}]/(x_{i}^{3}) \rightarrow k \rightarrow 0, \\
\vdots \\ 
0 \rightarrow \Sigma^{\deg x_{i}} k[x_{i}]/(x_{i}^{j-1}) \rightarrow
k[x_{i}]/(x_{i}^{j}) \rightarrow k \rightarrow 0, \\
\vdots \\
0 \rightarrow \Sigma^{\deg x_{i}} k[x_{i}]/(x_{i}^{n_{i}-1})
\rightarrow \alg{i} \rightarrow k \rightarrow 0.
\end{gather*}
So define $M_{j}$ by 
\[
M_{j} = \alg{S} \ltensor{\alg{i}} k[x_{i}]/(x_{i}^{j}).
\]
Then applying $\alg{S \cup \{i\}} \otimes_{\alg{i}} -$ to these exact
sequences gives short exact sequences of $\alg{T}$-modules, and hence
the desired exact triangles.
\end{proof}

\begin{definition}\label{defn-thick}
\begin{enumerate}
\item 
Given subsets $S$ and $T$ of $\N$, we say that $T$ is \emph{cofinite}
in $S$ if $T \subseteq S$ and the complement of $T$ in $S$ is finite.
We say that subsets $S$ and $T$ of $\N$ are \emph{commensurable} if $S
\cap T$ is cofinite in both $S$ and $T$.  We write $S \comm T$ if $S$
and $T$ are commensurable.  We write $S \lecomm T$ if $S$ is
commensurable with a subset of $T$.
\item 
A full subcategory of $\derived{\Alg}$ is a \emph{localizing
subcategory} if it is triangulated and closed under arbitrary direct
sums (and hence arbitrary colimits).  For any object $Y$ of
$\derived{\Alg}$, the localizing subcategory \emph{generated by
$Y$} is the intersection of all of the localizing subcategories
containing $Y$.  Given two objects $X$ and $Y$ of $\derived{\Alg}$,
we say that $X$ \emph{can be built from} $Y$ if $X$ is in the
localizing subcategory generated by $Y$.
\item 
A subcategory $\cat{D}$ of $\derived{\Alg}$ is called \emph{thick}
if it is triangulated and ``closed under retracts'': if $X \oplus Y$
is in $\cat{D}$, then so are $X$ and $Y$.  For any object $Y$ of
$\derived{\Alg}$, we write $\thick (Y)$ for the thick subcategory
generated by $Y$ -- that is, the intersection of all of the thick
subcategories containing $Y$.
\end{enumerate}
\end{definition}

Note that any localizing subcategory is thick by the Eilenberg swindle
\cite[Lemma 1.4.9]{axiomatic}, but thick subcategories need not be
localizing.  Also, it is well known that in the derived category of
any commutative ring $R$, every object can be built from $R$; see
\cite[Theorem 9.3.1]{axiomatic}, for example.

We recall from the introduction the definition of the \emph{Bousfield
class} $\bousfield{X}$ of an object $X$:
\[
\bousfield{X} = \{W \in \ob \derived{R} \suchthat X \ltensor{R} W = 0 \}.
\]
These are partially ordered by reverse inclusion.

\begin{lemma}\label{lemma-build-M}
Fix subsets $S$ and $T$ of $\N$. 
\begin{enumerate}
\item $\M{S}$ is in the thick subcategory generated by $\M{T}$ if and
only if $T$ is cofinite in $S$.
\item The following are equivalent:
\begin{enumerate}
\item $S \lecomm T$.
\item $\M{S}$ can be built from $\M{T}$.
\item $\bousfield{\M{S}} \leq \bousfield{\M{T}}$.
\end{enumerate}
\end{enumerate}
\end{lemma}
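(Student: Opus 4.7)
The plan is to prove part (b) by the cycle (iii) $\Rightarrow$ (i) $\Rightarrow$ (ii) $\Rightarrow$ (iii), and then deduce part (a) using part (b) together with a separate argument.

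For (iii) $\Rightarrow$ (i), I test the hypothesis $\bousfield{\M{S}} \leq \bousfield{\M{T}}$ against the object $\I{S}$: since $S\mycomp \cap S = \emptyset$, Lemma~\ref{lemma-smash} gives $\M{S} \Smash \I{S} \neq 0$, so also $\M{T} \Smash \I{S} \neq 0$, which by the same lemma forces $T\mycomp \cap S = S - T$ to be finite; this is exactly $S \lecomm T$. The implication (ii) $\Rightarrow$ (iii) is the standard observation that for each $Z$, the class $\{W \suchthat W \Smash Z = 0\}$ is a localizing subcategory. The substance is in (i) $\Rightarrow$ (ii). Assuming $S - T$ is finite, I first establish $\M{T - \{i\}} \in \loc(\M{T})$ for each $i \in T$: tensoring the periodic minimal free resolution of $k$ over $\alg{i}$ with the $\alg{i}$-flat module $\M{T}$ produces an unbounded but exact complex of shifted copies of $\M{T}$ whose totalization represents $\M{T - \{i\}}$ in $\derived{\Alg}$, and this totalization is the homotopy colimit of its finite truncations, each of which lies in $\thick(\M{T})$. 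Enumerating $T - S$ as $\sigma_{1}, \sigma_{2}, \dots$ and setting $T_{n} = T - \{\sigma_{1}, \dots, \sigma_{n}\}$, induction gives $\M{T_{n}} \in \loc(\M{T})$ for all $n$. Local finiteness of $\Alg$ means $\deg x_{j} \to \infty$, so the sequential colimit of the natural quotient maps $\M{T_{n}} \to \M{T_{n+1}}$ stabilizes in each graded degree and computes $\M{S \cap T}$; hence $\M{S \cap T} \in \loc(\M{T})$. Finally, attaching the finitely many elements of $S - T$ one at a time via Lemma~\ref{lemma-exact} gives $\M{S} \in \thick(\M{S \cap T}) \subseteq \loc(\M{T})$.

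For part (a), the "if" direction is immediate by iterating Lemma~\ref{lemma-exact} over the finitely many elements of $S - T$. For the "only if" direction, assume $\M{S} \in \thick(\M{T})$. Then $\M{S} \in \loc(\M{T})$, so part (b) gives $S - T$ finite. To upgrade this to the containment $T \subseteq S$, suppose toward a contradiction that some $i \in T - S$ exists, and restrict scalars along $\alg{i} \hookrightarrow \Alg$. Restriction is exact, hence sends thick subcategories to thick subcategories, so $\restrict{\M{S}}{\{i\}}$ belongs to the thick subcategory of $\derived{\alg{i}}$ generated by $\restrict{\M{T}}{\{i\}}$. Since $i \in T$, the module $\restrict{\M{T}}{\{i\}}$ is $\alg{i}$-free, so every object of that thick subcategory has bounded Tor-amplitude over $\alg{i}$. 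Since $i \notin S$, however, $x_{i}$ acts trivially on $\M{S}$, so $\restrict{\M{S}}{\{i\}}$ splits as a direct sum of shifts of the trivial $\alg{i}$-module $k$; and the self-Tor of $k$ over $\alg{i} = k[x_{i}]/x_{i}^{n_{i}}$ is nonzero in every homological degree, giving the required contradiction.

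The main obstacle is the step $\M{S \cap T} \in \loc(\M{T})$ in (i) $\Rightarrow$ (ii): when $T - S$ is infinite, one must remove infinitely many generators from $\M{T}$, and this requires two nested colimit constructions -- the homotopy colimit of finite truncations within a single periodic resolution, followed by the sequential colimit over an enumeration of $T - S$. A secondary subtlety is that the containment $T \subseteq S$ needed for the "only if" of (a) is invisible to the Bousfield-class tests supplied by Lemma~\ref{lemma-smash}; it must instead be extracted by descending to $\derived{\alg{i}}$ and using the infinite projective dimension of $k$ over $\alg{i}$.
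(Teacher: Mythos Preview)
Your proof is correct, but two of the three nontrivial steps take a genuinely different route from the paper.

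For (i)~$\Rightarrow$~(ii), the paper's argument is much shorter: once $S - T$ is finite one reduces to the case $S \subseteq T$, and then one simply observes that in $\derived{\alg{T}}$ the object $\M{T}=\alg{T}$ is the tensor unit, so \emph{every} object of $\derived{\alg{T}}$---in particular $\restrict{\M{S}}{T}$---lies in $\loc(\M{T})$; applying the exact, colimit-preserving inclusion functor $\incl{T}{}$ transports this to $\derived{\Alg}$. Your explicit construction via periodic $\alg{i}$-resolutions and a sequential colimit over an enumeration of $T - S$ is valid (the homotopy colimit of a tower of module surjections is just the ordinary colimit, so no $\varprojlim^{1}$ issue arises), but it reproves by hand a special case of the general fact that the unit generates the derived category. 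What your approach buys is an explicit filtration of $\M{S\cap T}$ by objects of $\thick(\M{T})$; what the paper's approach buys is that no second colimit is needed and the argument is insensitive to the enumeration of $T - S$.

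For the ``only if'' direction of (a), the paper restricts to $\alg{T}$ rather than to $\alg{i}$ and uses that $\Hom_{\derived{\alg{T}}}^{*}(\M{T},k)\cong k$, so $\Hom_{\derived{\alg{T}}}^{*}(\restrict{X}{T},k)$ is finite-dimensional for every $X\in\thick(\M{T})$; one then checks directly that $\Hom_{\derived{\alg{T}}}^{*}(\restrict{\M{S}}{T},k)$ is infinite-dimensional whenever $T$ is not cofinite in $S$, handling both failure modes ($T\not\subseteq S$ and $S-T$ infinite) at once. Your Tor-amplitude argument over $\alg{i}$ for a single $i\in T-S$ is equally valid and arguably more elementary, but it only detects the failure $T\not\subseteq S$; you correctly supplement it by first invoking part~(b) to get $S-T$ finite. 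Note that the paper proves (a) before (b) and does not use (b) in the proof of (a), whereas your argument for (a) depends on (b).

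Your (iii)~$\Rightarrow$~(i), testing directly against $\I{S}$, is essentially the contrapositive of the paper's test against $\I{U}$ with $U = S - (S\cap T)$, and is if anything slightly cleaner.
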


\begin{proof}
(a) First assume that $T$ is cofinite in $S$; we want to show that
$\M{S}$ is in $\thick (\M{T})$.  It is enough to assume that $S = T
\cup \{i \}$, in which case the exact triangles in
Lemma~\ref{lemma-exact} do the job.

Now assume that $T$ is not cofinite in $S$.  There are two cases to
consider: either $T$ is not a subset of $S$, or $T \subseteq S$ and
$S-T$ is infinite.  Since
\[
\Hom_{\derived{\alg{T}}}^{*}(\M{T},k) \cong
\Ext_{\alg{T}}^{*}(\M{T},k),
\]
and this is isomorphic to $k$ concentrated in degree 0, then
$\Hom_{\derived{\alg{T}}}^{*}(\restrict{X}{T}, k)$ is
finite-dimensional for any object $X$ in $\thick (\M{T})$.  When
viewed as a $\alg{T}$-module, $\M{S}$ splits as a direct sum:
\[
\restrict{\M{S}}{T} \cong \restrict{\M{S-T}}{T} \otimes \restrict{\M{T
\cap S}}{T} \cong \bigoplus \restrict{\M{T \cap S}}{T},
\]
where the direct sum is indexed by a basis for $\M{S-T}$, since
$\M{S-T}$ is trivial as a $\alg{T}$-module.  If $S-T$ is infinite,
then so is this direct sum, and hence so is
$\Ext_{\alg{T}}^{0}(\restrict{\M{S}}{T},k)$.  Therefore $\M{S}$ cannot
be in $\thick (\M{T})$.  Otherwise, Shapiro's
lemma~\ref{lemma-shapiro} says that
\[
\Ext_{\alg{T}}^{*}(\M{T\cap S},k) \cong \Ext_{\alg{T-T\cap 
S}}^{*}(k,k),
\]
and this is infinite-dimensional as long as $T \neq T \cap S$, and so,
again, $\M{S}$ cannot be in $\thick (\M{T})$.

(b) To show that $\M{S}$ can be built from $\M{T}$ if $S \lecomm T$,
we need to prove that $\M{S}$ can be built from $\M{T}$ if either $S =
T \cup \{i\}$ or if $S \subseteq T$.  If $S=T \cup \{i\}$, then part
(a) shows us that in this case $\M{S}$ is in $\thick (\M{T})$ and
hence can be built from $\M{T}$.

Now we assume that $S \subseteq T$.  First we work in the category
$\derived{\alg{T}}$.  Every object in $\derived{\alg{T}}$ may
be built from $\M{T}$, and in particular $\restrict{\M{S}}{T}$ may be
built from $\M{T}$.  Now apply the inclusion functor $\incl{T}{}:
\derived{\alg{T}} \rightarrow \derived{\Alg}$: this functor is
the identity on objects, so it is exact and commutes with direct
limits; thus in $\derived{\Alg}$, $\M{S}$ may be built from
$\M{T}$.  This finishes the proof that (1) implies (2).

(One can make this sort of argument more explicit: if $P_{*}$ is a projective
resolution of $\M{S}$ as a $\alg{T}$-module, then $\M{S}$ is the
colimit of the truncations
\[
0 \rightarrow P_{n} \rightarrow \dotsb \rightarrow P_{0} \rightarrow 0
\]
of $P_{*}$.  One can then show by a simple induction argument that
each of these may be built from $\alg{T}$.)

Since the derived tensor product is exact and commutes with direct
limits, if $X$ can be built from $Y$, then $\bousfield{X} \leq
\bousfield{Y}$.  Thus (2) implies (3).

Now we prove that (3) implies (1).  Assume that $S$ is not
commensurable with any subset of $T$; we will show that
$\bousfield{\M{S}}$ is not less than or equal to $\bousfield{\M{T}}$.
If $S \not \lecomm T$, then in particular $S$ is not commensurable
with $S\cap T$, so $S \cap T$ is not cofinite in $S$.  If we let
$U=S-(S\cap T)$, then we see that $U$ is an infinite subset of
$T\mycomp$ which does not intersect $S\mycomp$.  So by
Lemma~\ref{lemma-smash}, we have
\[
\I{U} \Smash \M{T} = 0 \quad \text{but} \quad \I{U} \Smash \M{S} \neq
0.
\]
Thus $\bousfield{\M{T}}$ is not greater than or equal to
$\bousfield{\M{S}}$, as desired.
\end{proof}

\begin{corollary}\label{cor-build-k}
$k=\M{\emptyset}$ may be built from $\M{S}$ for any $S \subseteq \N$.
\end{corollary}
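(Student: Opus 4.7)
The plan is to simply invoke Lemma~\ref{lemma-build-M}(b) with the roles $S \mapsto \emptyset$ and $T \mapsto S$. That lemma asserts the equivalence of three conditions, the first of which is $\emptyset \lecomm S$, and the second of which is that $\M{\emptyset}$ can be built from $\M{S}$. So the entire content of the corollary reduces to verifying that $\emptyset \lecomm S$ for every $S \subseteq \N$.

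To check this, recall from Definition~\ref{defn-thick}(a) that $\emptyset \lecomm S$ means that $\emptyset$ is commensurable with some subset of $S$. Taking that subset to be $\emptyset$ itself, which is a subset of every $S$, we only need $\emptyset \comm \emptyset$; and this holds trivially, since $\emptyset \cap \emptyset = \emptyset$ has finite (in fact empty) complement in $\emptyset$. Thus the hypothesis of Lemma~\ref{lemma-build-M}(b) is satisfied, and the corollary follows immediately.

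There is no real obstacle here: the work was all done in Lemma~\ref{lemma-build-M}, and this corollary is just the observation that the empty set sits at the bottom of the commensurability preorder. If one preferred a more direct argument avoiding the lemma, one could also proceed by induction on the size of a finite subset of $S$ using the exact triangles of Lemma~\ref{lemma-exact}: starting from $\M{S}$, repeatedly apply those triangles to successively strip off generators $x_{i}$ for $i \in S$, eventually reaching $\M{\emptyset} = k$ inside the localizing subcategory generated by $\M{S}$ (using a colimit over finite subsets of $S$ when $S$ is infinite). But the appeal to Lemma~\ref{lemma-build-M}(b) is the cleanest route.
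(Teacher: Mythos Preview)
Your proposal is correct and matches the paper's intent: the corollary is stated without proof immediately after Lemma~\ref{lemma-build-M}, so it is meant to follow directly from part (b) of that lemma via the trivial observation that $\emptyset \lecomm S$. Your verification of this (and the optional alternative route through Lemma~\ref{lemma-exact}) is exactly right.
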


\begin{corollary}\label{cor-uncountable}
For subsets $S$ and $T$ of $\N$, $\bousfield{\M{S}} =
\bousfield{\M{T}}$ if and only if $S \comm T$.  Thus there are
uncountably many Bousfield classes in $\derived{\Alg}$.  Indeed,
there is an uncountable antichain in the Bousfield lattice.
\end{corollary}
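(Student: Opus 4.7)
The plan is to reduce everything to Lemma~\ref{lemma-build-M}(b), and then invoke a standard almost-disjoint family of subsets of $\N$ for the antichain.

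First I would handle the equivalence $\bousfield{\M{S}} = \bousfield{\M{T}} \iff S \comm T$. The class equality is the conjunction $\bousfield{\M{S}} \leq \bousfield{\M{T}}$ and $\bousfield{\M{T}} \leq \bousfield{\M{S}}$, which by Lemma~\ref{lemma-build-M}(b) is the same as $S \lecomm T$ and $T \lecomm S$. The direction $S \comm T \Rightarrow S \lecomm T$ and $T \lecomm S$ is immediate from the definitions (take $S \cap T$ as the common refinement). For the converse, if $S \lecomm T$ then there is some $T' \subseteq T$ with $S \cap T'$ cofinite in $S$; since $S \cap T' \subseteq S \cap T \subseteq S$, the set $S \cap T$ is also cofinite in $S$. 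Symmetrically, $S \cap T$ is cofinite in $T$, so $S \comm T$.

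Next I would produce the antichain. It suffices to exhibit an uncountable family $\{S_\alpha\}_{\alpha \in A}$ of infinite subsets of $\N$ that is \emph{almost disjoint}, meaning $S_\alpha \cap S_\beta$ is finite for $\alpha \neq \beta$: then no $S_\alpha$ can be commensurable with a subset of any $S_\beta$ (any $T' \subseteq S_\beta$ satisfies $S_\alpha \cap T' \subseteq S_\alpha \cap S_\beta$, a finite set, so cannot be cofinite in the infinite set $S_\alpha$), hence $S_\alpha \not\lecomm S_\beta$, and by Lemma~\ref{lemma-build-M}(b) the Bousfield classes $\bousfield{\M{S_\alpha}}$ are pairwise incomparable. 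The construction of such a family is classical: identify $\N$ with the vertex set of the infinite complete binary tree, and for each infinite binary sequence $\sigma \in \{0,1\}^{\N}$ let $S_\sigma$ be the set of vertices on the unique root-ray determined by $\sigma$. Distinct sequences branch at some finite depth, after which the rays are disjoint, so $S_\sigma \cap S_\tau$ is finite when $\sigma \neq \tau$. This yields $2^{\aleph_0}$ infinite, pairwise almost disjoint subsets of $\N$, giving an uncountable antichain in $\B$ and, a fortiori, uncountably many Bousfield classes.

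I do not foresee a real obstacle here; the content is bookkeeping with commensurability plus invoking the almost-disjoint-family construction. The only subtlety is being careful that $\bousfield{\M{S_\alpha}} \neq \bousfield{\M{S_\beta}}$ follows not merely from $S_\alpha \neq S_\beta$ but from the stronger almost-disjointness, which is exactly what rules out the asymmetric relation $S_\alpha \lecomm S_\beta$ appearing in Lemma~\ref{lemma-build-M}(b).
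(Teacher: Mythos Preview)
Your argument is correct. The reduction of the equivalence $\bousfield{\M{S}}=\bousfield{\M{T}}\iff S\comm T$ to Lemma~\ref{lemma-build-M}(b) is clean and accurate (the paper leaves this first sentence implicit), and almost-disjointness is indeed enough to force $S_\alpha\not\lecomm S_\beta$ and hence incomparability of the Bousfield classes.

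The paper takes a genuinely different route for the antichain. Rather than an almost-disjoint family, it partitions $\N$ into the blocks $P_p=\{p^k:k\in\N\}$ and builds subsets $S_\alpha$ each of which meets every $P_p$ in an infinite, co-infinite set; it then shows by a diagonal argument that no countable collection of such sets can be maximal with respect to pairwise $\lecomm$-incomparability. These $S_\alpha$ are \emph{not} almost disjoint---for example the paper's initial family $S_q=\bigcup_p\{p^{kq}\}$ has $S_2\cap S_3$ infinite---so the mechanisms really differ. Your construction is shorter and more standard, and it even gives the antichain cardinality $2^{\aleph_0}$ directly rather than merely ``uncountable.'' The paper's more elaborate construction buys something else: because each $S_\alpha$ is spread across all the blocks $P_p$, a small modification lets one place the entire antichain inside any prescribed interval $[\bousfield{\M{T}},\bousfield{\M{U}}]$ with $T\lecomm U$ and $T\not\comm U$ (this is Remark~\ref{remark-uncountable}). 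Your branch family, being almost disjoint, does not adapt to that refinement without further work.
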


Recall that an \emph{antichain} in a partially ordered set is a
subset any two of whose elements are not comparable.

\begin{proof}
For each prime number $p$, let $P_{p}=\{p^{k} \suchthat k \in N \}$.
We claim that there is an uncountable set $\{S_{\alpha}\}$ of
subsets of $\N$ such that
\begin{enumerate}
\item [(i)] the sets $S_{\alpha} \cap P_{p}$ and $S_{\alpha}\mycomp
\cap P_{p}$ are infinite for all $\alpha$, $p$, and
\item [(ii)] the commensurability classes of $S_{\alpha}$ and
$S_{\beta}$ are not comparable, for all $\alpha$, $\beta$.
\end{enumerate}
First, there are infinitely many such subsets; for instance, for each
prime $q$, define $S_{q}$ to be $\bigcup_{p} \{p^{kq} \suchthat k \in
\N \}$.

Now suppose that $S_{2}$, $S_{3}$, $S_{5}$, \dots are any subsets of
$N$, indexed by the prime numbers, satisfying (i) and (ii) above.  We
construct a set $T$, satisfying (i), whose commensurability class is
not comparable to that of any $S_{p}$.  Let
\[
T = \bigcup_{p} \left( P_{p} - (S_{p} \cap P_{p}) \right).
\]
Then for each $p$, the sets $T \cap P_{p}$ and $S_{p} \cap P_{p}$ form
a partition of $P_{p}$ into two infinite sets, and hence $T$ is as
advertised.  Therefore the collection of subsets of $\N$ satisfying
(i) and (ii) is not countable.

Finally, given an uncountable set $\{S_{\alpha}\}$ of subsets of $\N$
satisfying (i) and (ii), the set of Bousfield classes
$\{\bousfield{\M{S_{\alpha}}}\}$ is uncountable, and no two elements
of it are comparable.
\end{proof}

\begin{remark}\label{remark-uncountable}
The uncountable antichain in the corollary consists of Bousfield
classes of $\alg{S_{\alpha}}$ for various subsets $S_{\alpha}$ of
$\N$.  A simple modification in the proof yields this result: for any
subsets $T$, $U$ of $\N$ with $T \lecomm U$ and $T \not \comm U$,
there is an uncountable antichain in the interval between
$\bousfield{\alg{T}}$ and $\bousfield{\alg{U}}$: one may choose each
$S_{\alpha}$ so that $T \lecomm S_{\alpha} \lecomm U$.
\end{remark}

\begin{theorem}\label{thm-big}
The lattice $\B$ has cardinality at least $2^{2^{\aleph_{0}}}$.
\end{theorem}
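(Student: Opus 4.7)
The plan is to upgrade the uncountable antichain of Corollary~\ref{cor-uncountable} to an antichain of cardinality $2^{\aleph_{0}}$ and then take arbitrary joins of subfamilies to produce $2^{2^{\aleph_{0}}}$ distinct Bousfield classes. The key computational tool is Lemma~\ref{lemma-smash}: a single Brown--Comenetz dual $\I{S}$ simultaneously annihilates every $\M{T}$ for which $T\mycomp \cap S$ is infinite, which lets one test equality of enormous joins against a single witness.

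First I would fix a partition $\N = \bigsqcup_{i \geq 1} B_{i}$ into infinite blocks, split each block as $B_{i} = B_{i}^{0} \sqcup B_{i}^{1}$ into two infinite halves, and for every sequence $\alpha = (\alpha_{1},\alpha_{2},\dotsc) \in \{0,1\}^{\N}$ put $S_{\alpha} = \bigcup_{i} B_{i}^{\alpha_{i}}$. If $\alpha \neq \beta$ differ at coordinate $i$, then $B_{i}^{\alpha_{i}} \subseteq S_{\alpha} \setminus S_{\beta}$ and $B_{i}^{\beta_{i}} \subseteq S_{\beta} \setminus S_{\alpha}$, so both differences are infinite; this furnishes continuum-many pairwise $\lecomm$-incomparable subsets of $\N$.

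Next, for each subset $\mathcal{A} \subseteq 2^{\N}$ I would form
\[
Y_{\mathcal{A}} = \bigoplus_{\alpha \in \mathcal{A}} \M{S_{\alpha}},
\]
whose Bousfield class equals $\bigvee_{\alpha \in \mathcal{A}} \bousfield{\M{S_{\alpha}}}$, and show that the map $\mathcal{A} \mapsto \bousfield{Y_{\mathcal{A}}}$ is injective; since $|P(2^{\N})| = 2^{2^{\aleph_{0}}}$, this yields the theorem. Given $\mathcal{A} \neq \mathcal{B}$, pick $\alpha \in \mathcal{A} \setminus \mathcal{B}$ (swapping if necessary) and let the separating witness be $W = \I{S_{\alpha}}$. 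Because $\Smash$ commutes with arbitrary direct sums,
\[
Y_{\mathcal{B}} \Smash W = \bigoplus_{\beta \in \mathcal{B}} \bigl( \M{S_{\beta}} \Smash \I{S_{\alpha}} \bigr),
\]
and every summand vanishes by Lemma~\ref{lemma-smash} since $S_{\beta}\mycomp \cap S_{\alpha} = S_{\alpha} \setminus S_{\beta}$ is infinite for each $\beta \neq \alpha$. On the other hand $S_{\alpha}\mycomp \cap S_{\alpha} = \emptyset$, so $\M{S_{\alpha}} \Smash \I{S_{\alpha}} \neq 0$ appears as a summand of $Y_{\mathcal{A}} \Smash W$, giving $Y_{\mathcal{A}} \Smash W \neq 0$ and hence $\bousfield{Y_{\mathcal{A}}} \neq \bousfield{Y_{\mathcal{B}}}$.

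The main obstacle is the very first step: Corollary~\ref{cor-uncountable} literally asserts only an uncountable antichain, while this argument needs a continuum-sized one. The explicit block construction above, essentially a Cantor-style indexing of antichains in $(P(\N)/\mathrm{fin}, \lecomm)$, supplies this cheaply and is just a mild strengthening of the diagonalization already present in the corollary. Once the $2^{\aleph_{0}}$-antichain is in hand, the rest is an immediate application of Lemma~\ref{lemma-smash} together with the fact that a single Brown--Comenetz dual $\I{S_{\alpha}}$ uniformly kills the entire subfamily $\{\M{S_{\beta}} : \beta \neq \alpha\}$, no matter how large $\mathcal{B}$ is.
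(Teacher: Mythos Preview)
Your argument is correct and follows essentially the same route as the paper: build an antichain $\{\M{S_{\alpha}}\}$ of size $2^{\aleph_{0}}$, take direct sums over arbitrary subfamilies, and separate distinct subfamilies using $\I{S_{\alpha}}$ via Lemma~\ref{lemma-smash}. Your explicit block construction of the continuum-sized family is in fact tidier than the paper's treatment, which simply asserts that the indexing set $J$ from Corollary~\ref{cor-uncountable} has cardinality at least $2^{\aleph_{0}}$ without spelling out a direct construction.
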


\begin{proof}
By Corollary~\ref{cor-uncountable}, $\B$ contains an uncountable
antichain; indeed, the elements of the antichain may be chosen to be
$\M{S_{\alpha}}$, where $\{S_{\alpha}\}_{\alpha \in J}$ is an
uncountable set of subsets of $\N$, no two of which have comparable
commensurability classes.  Here, $J$ is some indexing set with
cardinality at least $2^{\aleph_{0}}$.  For any subset $I$ of $J$, let
\[
X_{I} = \bigoplus_{\alpha \in I} \M{S_{\alpha}}.
\]
We claim that the objects $X_{I}$ have distinct Bousfield classes: if
$I \neq I'$, then $\bousfield{X_{I}} \neq \bousfield{X_{I'}}$.  If $I
\neq I'$, then either $I - I'$ or $I' - I$ is nonempty; without loss
of generality, suppose that $\alpha \in I - I'$.  Then the
commensurability class of $S_{\alpha}$ is not comparable with
$S_{\beta}$ for any $\beta \in I'$; therefore $\I{S_{\alpha}} \Smash
\M{S_{\beta}}=0$ for every $\beta \in I'$, so $\I{S_{\alpha}} \Smash
X_{I'} = 0$.  On the other hand, $\I{S_{\alpha}} \Smash \M{S_{\alpha}}
\neq 0$, so $\I{S_{\alpha}} \Smash X_{I} \neq 0$.  (Indeed, one can
recover the set $I$ from $\bousfield{X_{I}}$: $I$ will consist of
those indices $\beta$ for which $X_{I} \Smash \I{S_{\beta}} \neq 0$.)
This completes the proof.
\end{proof}

Now we examine the dual picture and determine when $\I{S}$ may be
built from $\I{T}$.

\begin{lemma}\label{lemma-build-I}
Fix subsets $S$ and $T$ of $\N$.  
\begin{enumerate}
\item 
$\I{S}$ is in $\thick (\I{T})$ if and only if $T$ is cofinite in $S$.
\item The following are equivalent:
\begin{enumerate}
\item $T \lecomm S$.
\item $\I{S}$ can be built from $\I{T}$.
\item $\bousfield{\I{S}} \leq \bousfield{\I{T}}$.
\end{enumerate}
\end{enumerate}
\end{lemma}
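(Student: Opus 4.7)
My plan is to parallel the proof of Lemma \ref{lemma-build-M}. For part (a), I would apply Brown-Comenetz duality $\RHom_k(-, k)$: this contravariant exact functor is self-inverse on locally finite objects and carries $\M{S}$ to $\I{S}$. Since any exact functor preserves thick subcategories, $\I{S} \in \thick(\I{T})$ if and only if $\M{S} \in \thick(\M{T})$, which by Lemma \ref{lemma-build-M}(a) is equivalent to $T$ being cofinite in $S$. For part (b), the implication $(2) \Rightarrow (3)$ is immediate from the closure of a localizing subcategory under $\Smash$ by a fixed object. For $(3) \Rightarrow (1)$, note that $T \lecomm S$ is equivalent to $T - S$ being finite (take $T' = T \cap S$); argue the contrapositive with $U = S \cup T\mycomp$, so that $U\mycomp \cap S = \emptyset$ while $U\mycomp \cap T = T - S$ is infinite. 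By Lemma \ref{lemma-smash}, $\I{T} \Smash \M{U} = 0$ yet $\I{S} \Smash \M{U} \neq 0$, contradicting $\bousfield{\I{S}} \leq \bousfield{\I{T}}$.

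The main task is $(1) \Rightarrow (2)$; I would factor the building of $\I{S}$ from $\I{T}$ through $\I{T \cap S}$ in two steps. In Step A, I show $\I{T \cap S} \in \loc(\I{T})$. Let $F = T - S$, which is finite. The algebra factorization $\alg{T} \cong \alg{T \cap S} \otimes \alg{F}$ together with local finiteness yields $\I{T} \cong \I{T \cap S} \otimes_k \I{F}$ as $\alg{T}$-modules. Because $\alg{F}$ is finite-dimensional and local, Neeman's theorem applied in $\derived{\alg{F}}$ gives only two localizing subcategories, so $k \in \loc(\I{F})$. External tensoring with $\I{T \cap S}$ is an exact functor $\derived{\alg{F}} \to \derived{\alg{T}}$ preserving direct sums, so it carries $\loc(\I{F})$ into $\loc(\I{T \cap S} \otimes_k \I{F}) = \loc(\I{T})$; hence $\I{T \cap S} \cong \I{T \cap S} \otimes_k k \in \loc(\I{T})$ in $\derived{\alg{T}}$, and pushing forward via $\incl{T}{}$ gives the same in $\derived{\Alg}$. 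In Step B, I show $\I{S} \in \loc(\I{T \cap S})$. Let $V = S - (T \cap S)$, which may be infinite. The same reasoning gives $\I{S} \cong \I{T \cap S} \otimes_k \I{V}$. Local finiteness of $\alg{V}$ ensures that each element of the graded dual $\I{V}$ is supported on finitely many generators, so $\I{V} = \colim_{V' \subseteq V \text{ finite}} \I{V'}$ as $\alg{V}$-modules, via the extension maps $\I{V'} \hookrightarrow \I{V''}$ dual to the projections $\M{V''} \to \M{V'}$. Each $\I{V'}$ for $V'$ finite lies in $\thick(k)$ by Neeman applied to $\derived{\alg{V'}}$, and hence in $\loc(k)$; closure of $\loc(k)$ under filtered colimits gives $\I{V} \in \loc(k)$. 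External tensoring with $\I{T \cap S}$ then yields $\I{S} \in \loc(\I{T \cap S})$, and $\incl{S}{}$ transports this to $\derived{\Alg}$. Combining, $\I{S} \in \loc(\I{T \cap S}) \subseteq \loc(\I{T})$.

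The main obstacle is Step B in the case when $V$ is infinite. Despite $\Alg$ being non-noetherian and $\alg{V}$ infinite-dimensional, the local finiteness of the grading is what makes $\I{V}$ a direct colimit of its finite subduals, allowing it to sit in the localizing subcategory generated by $k$. A naive attempt treating $\I{V}$ as an inverse limit of finite duals would only land in a product closure, which is not available in a localizing subcategory.
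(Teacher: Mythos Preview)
Your argument is correct and, for part~(b), tracks the paper's proof closely: your Step~A corresponds to the paper's case $T=S\cup\{i\}$, and your Step~B is exactly the content of Lemma~\ref{lemma-build-I-from-k}, which writes $\I{V}$ as the colimit $\colim_{V'\text{ finite}}\I{V'}$ and notes each $\I{V'}$ lies in $\thick(k)$. Your treatment of part~(a), however, is genuinely different and cleaner: the paper argues directly, showing that $\Hom_{\derived{\alg{T}}}^{*}(k,\restrict{X}{T})$ is finite-dimensional for $X\in\thick(\I{T})$ and then exhibiting infinite dimensionality for $X=\I{S}$ when $T$ is not cofinite in $S$, whereas your reduction via Brown--Comenetz duality to Lemma~\ref{lemma-build-M}(a) bypasses this computation entirely (the needed facts---that $\RHom_k(-,k)$ is exact and that the double dual is the identity on the locally finite objects $\M{S}$---are immediate since $k$ is a field). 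One small point: your two appeals to Neeman are unnecessary and slightly off-target, since the paper only cites Neeman's description of Bousfield classes, not of localizing or thick subcategories. In Step~A it suffices to note that for finite $F$ the module $\I{F}$ is a suspension of $\alg{F}$, so $\loc(\I{F})=\derived{\alg{F}}$; in Step~B, $\I{V'}\in\thick(k)$ for finite $V'$ follows directly from part~(a) with $T=\emptyset$, or from the composition series of a finite-dimensional module over the local ring $\alg{V}$.
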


\begin{proof}
(a) If $S = T \cup \{i\}$, then we dualize the argument in
Lemma~\ref{lemma-build-M}(a): the Brown-Comenetz duals of the exact
triangles in Lemma~\ref{lemma-exact} show that $\I{S}$ is in $\thick
(\I{T})$.

Now assume that $T$ is not cofinite in $S$.  As in
Lemma~\ref{lemma-build-M}(a), we may assume that either $T$ is not a
subset of $S$ or $T \subseteq S$ with $S-T$ infinite.  Since
$\Hom_{\derived{\alg{T}}}^{*}(k,\I{T})$ is a copy of $k$ concentrated
in dimension $0$ (by Lemma~\ref{lemma-bcdual}), then
$\Hom_{\derived{\alg{T}}}^{*}(k,\restrict{X}{T})$ is
finite-dimensional for all $X$ in $\thick (\I{T})$.  When viewed as a
$\alg{T}$-module, $\I{S}$ splits as a direct sum:
\[
\restrict{\I{S}}{T} \cong \restrict{\I{S-T}}{T} \otimes \restrict{\I{T
\cap S}}{T} \cong \bigoplus \restrict{\I{T \cap S}}{T},
\]
as in the proof of Lemma~\ref{lemma-build-M}.  If $S-T$ is
infinite-dimensional, then the direct sum is infinite, so
$\Hom_{\derived{\alg{T}}}^{0}(k, \restrict{\I{S}}{T})$ is
infinite-dimensional.  Otherwise, by Lemmas~\ref{lemma-shapiro} and
\ref{lemma-bcdual}, we have
\begin{align*}
\Hom_{\derived{\alg{T}}}^{*} (k, \I{T \cap S})
& \cong \Hom_{k}^{*} (H^{*}(k \ltensor{\alg{T}} \M{T \cap S}), k) \\
& \cong \Hom_{k}^{*} (H^{*}(k \ltensor{\alg{U}} k), k),
\end{align*}
where $U=T-T \cap S$.
The cohomology of $(k \ltensor{\alg{U}} k)$ is
$\Tor^{\alg{U}}_{*}(k,k)$, which is infinite-dimensional
as long as $T \neq T \cap S$.

(b) 
As in the proof of Lemma~\ref{lemma-build-M}(b), we show that if either
$T = S \cup \{i\}$ or if $T \subseteq S$, then $\I{S}$ can be built
from $\I{T}$.  First assume that $T=S \cup \{i\}$.  Then $\I{T} \cong
\I{S} \otimes \I{i}$, and so up to suspension, $\I{T} \cong \I{S}
\otimes \M{x_{i}}$.  By Lemma~\ref{lemma-build-M}, $k$ may be built
from $\M{x_{i}}$, and so tensoring with $\I{S}$, we find that $\I{S}$
may be built from $\I{T}$.

Now assume that $T \subseteq S$.  In the special case when $T =
\emptyset$, we can use the Postnikov tower for $\I{S}$ to prove this
-- see Lemma~\ref{lemma-build-I-from-k} below.  For general $T
\subseteq S$, we first note that in the category
$\derived{\alg{S-T}}$, $\I{S-T}$ may be built from $k$, by the
previous sentence.  Now tensor with $\I{T}$: as $\alg{S} = \alg{S-T}
\otimes \alg{T}$-modules, $\I{S-T} \otimes \I{T}$ may be built from $k
\otimes \I{T}$.  Finally, we apply the inclusion functor from
$\alg{S}$-modules to $\Alg$-modules.  This proves that (1) implies
(2).

As explained in the proof of Lemma~\ref{lemma-build-M}, (2) always
implies (3).

Finally, assume that $T$ is not commensurable with any subset of $S$.
This means that $U:=S \cap T$ is not cofinite in $T$.  Thus $U$ is an
infinite set with $U \cap S$ empty and $U \cap T$ infinite.  Therefore
$\I{S} \Smash \M{U\mycomp} \neq 0$ while $\I{T} \Smash \M{U\mycomp} =
0$.  Thus (3) implies (1).
\end{proof}

We used the following special case of the preceding lemma in its
proof.

\begin{lemma}\label{lemma-build-I-from-k}
If $S$ is any subset of $\N$, $\I{S}$ can be built from
$k=\I{\emptyset}$.
\end{lemma}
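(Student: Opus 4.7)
The plan is to realise $\I{S}$ as a sequential homotopy colimit of finite-dimensional $\Alg$-submodules, each of which visibly lies in $\thick(k) \subseteq \loc(k)$ for elementary reasons.

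First I would record the key finiteness property of $\I{S}$. Since $\I{S} = \Hom_{k}^{*}(\M{S}, k)$, it is concentrated in non-positive degrees, and the graded component $\I{S}_{-d}$ is dual to $\M{S}_{d}$. Because $\deg x_{i} = 2^{i}$, the constraint $2^{i} \leq d$ limits the indices $i$ that can appear in a monomial of degree $d$ to a finite range, so $\M{S}_{d}$, and hence $\I{S}_{-d}$, is finite-dimensional for each $d$.

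Next, for each $n \geq 0$ set
\[
V_{n} := \bigoplus_{d=0}^{n} \I{S}_{-d} \subseteq \I{S}.
\]
The crucial step is to check that $V_{n}$ is a $\Alg$-submodule. Under the dual action, multiplication by $x_{i}$ sends $\I{S}_{-d}$ into $\I{S}_{-d + \deg x_{i}}$, which is either zero (when $\deg x_{i} > d$, because $\I{S}$ vanishes in positive degrees) or sits in degree between $-d$ and $0$, so in either case it lies in $V_{n}$. Being a finite-dimensional graded $\Alg$-module bounded above in degree $0$, $V_{n}$ is annihilated by a sufficiently high power of the augmentation ideal of $\Alg$, so it admits a finite filtration whose subquotients are suspensions of $k$. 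Hence $V_{n} \in \thick(k) \subseteq \loc(k)$.

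Finally, $\I{S} = \bigcup_{n} V_{n} = \colim_{n} V_{n}$ as $\Alg$-modules, and this filtered colimit agrees with the sequential homotopy colimit in $\derived{\Alg}$ via the usual telescope triangle $\bigoplus_{n} V_{n} \to \bigoplus_{n} V_{n} \to \colim_{n} V_{n}$. Since $\loc(k)$ is triangulated and closed under arbitrary direct sums, it is closed under this construction, and we conclude that $\I{S} \in \loc(k)$, i.e.\ that $\I{S}$ can be built from $k$. The only point I expect to require real care is the verification that the degree-bounded truncations $V_{n}$ are actually $\Alg$-stable; once that is settled, everything else is formal from the closure properties of localizing subcategories.
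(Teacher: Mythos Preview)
Your argument is correct and is essentially the paper's first proof, which invokes the ``Postnikov tower'' for $\I{S}$ with respect to the internal grading: your $V_{n}$ is exactly the $n$th stage of that tower, and your verification that $V_{n}$ is a $\Alg$-submodule is the point the paper leaves implicit. The paper also supplies a second, more explicit filtration by the submodules $\I{T}$ for finite $T \subseteq S$ (each a suspension of $\M{T} \in \thick(k)$, with transition maps manufactured from Lemma~\ref{lemma-exact}), but this is only a cosmetic difference in the choice of exhausting sequence of finite-dimensional submodules.
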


\begin{proof}
The stable homotopy theoretic version of the proof is this: use the
Postnikov tower for $\I{S}$.  The ``homotopy groups'' (= homology
groups in $\derived{\Alg}$) for $\I{S}$ are zero except in dimension
zero, and $\pi_{0}(\I{S}) = H^{0}(\I{S})$ is the graded vector space dual of $\alg{S}$.
When $S$ is finite, this is finite, and so the object may be built
from $k$.  When $S$ is infinite, this is infinite but is bounded
above, so the Postnikov tower describes how to write it as a colimit
of objects built from $k$.

A more explicit, homological algebra proof is as follows: first, write
$S=\{x_{i_{1}}, x_{i_{2}}, \dots \}$.  Assume that $n_{i_{k}}>2$ for
each $k$; the case when $n_{i_{k}}=2$ is similar but easier.  Whenever
$T$ is finite, $\I{T}$ is a shifted copy of $\alg{T}$; more precisely,
$\I{i} \cong \Sigma^{0,-(n_{i}-1)\deg x_{i}}\M{i}$, and when $S$ and
$T$ disjoint, $\I{S \cup T} \cong \I{S} \otimes \I{T}$, so when $T$ is
finite, we have
\[
\I{T} \cong \Sigma^{0, -\sum_{j \in T} (n_{j}-1) \deg x_{j}} \M{T}.
\]
Note also that by Lemma~\ref{lemma-build-M}, $\M{i}$ is in the thick
subcategory generated by $k$, and so may be built from $k$; therefore
the same is true of $\I{i}$.

Now for any $i$, the exact triangles in Lemma~\ref{lemma-exact} give
maps
\[
\begin{split}
k \rightarrow \Sigma^{0,-\deg x_{i}} M_{2} 
\rightarrow \Sigma^{0,-2\deg x_{i}} M_{3}
\rightarrow \dots \\
\rightarrow
\Sigma^{0,-(n_{i}-2) \deg x_{i}} M_{n_{i}-1}
\rightarrow \Sigma^{0,-(n_{i}-1)\deg x_{i}}
\M{i},
\end{split}
\]
which we compose to get $k \xrightarrow{f_{i}} \I{i}$.
Given two integers $i$ and $j$ in $\N$, we can form
\[
k \xrightarrow{f_{i}} \I{i} \xrightarrow{1 \otimes f_{j}} \I{i}
\otimes \I{j} \cong \I{i,j}.
\]
In this manner, we get a composite
\[
k \rightarrow \I{i_{1}} \rightarrow \I{i_{1}, i_{2}} \rightarrow \dots
\]
which displays $\I{S}$ as a colimit of objects built from $k$.
\end{proof}

We can then dualize the arguments in Corollary~\ref{cor-uncountable}
and Remark~\ref{remark-uncountable} to get the following.

\begin{corollary}\label{cor-I-uncountable}
For subsets $S$ and $T$ of $\N$, $\bousfield{\I{S}} =
\bousfield{\I{T}}$ if and only if $S \comm T$.  Thus for subsets $S$
and $T$ of $\N$ with $T \lecomm S$ and $T \not \comm S$, there is an
uncountable antichain in the interval between $\bousfield{\I{S}}$ and
$\bousfield{\I{T}}$.
\end{corollary}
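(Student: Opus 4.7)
The plan is to dualize the arguments for Corollary~\ref{cor-uncountable} and Remark~\ref{remark-uncountable}, substituting Lemma~\ref{lemma-build-I} for Lemma~\ref{lemma-build-M} throughout. The only real change is that the order-preserving correspondence $S \mapsto \bousfield{\M{S}}$ (with respect to $\lecomm$) gets replaced by the order-\emph{reversing} correspondence $S \mapsto \bousfield{\I{S}}$: by Lemma~\ref{lemma-build-I}(b), $\bousfield{\I{S}} \leq \bousfield{\I{T}}$ holds precisely when $T \lecomm S$.

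For the first assertion, I would first observe that $\bousfield{\I{S}} = \bousfield{\I{T}}$ is equivalent to the conjunction $T \lecomm S$ and $S \lecomm T$, by two applications of Lemma~\ref{lemma-build-I}(b). A short set-theoretic check then shows that this conjunction is equivalent to $S \comm T$: if $T \comm S'$ for some $S' \subseteq S$, then $T \cap S \supseteq T \cap S'$ is cofinite in $T$, and symmetrically $S \cap T$ is cofinite in $S$; conversely, if $S \comm T$ then taking $S' = S \cap T$ exhibits $T \lecomm S$ and $S \lecomm T$.

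For the second assertion, suppose $T \lecomm S$ with $T \not\comm S$, so that $\bousfield{\I{S}} < \bousfield{\I{T}}$. The construction in Corollary~\ref{cor-uncountable}, as extended in Remark~\ref{remark-uncountable}, produces an uncountable family $\{S_{\alpha}\}$ of subsets of $\N$ satisfying $T \lecomm S_{\alpha} \lecomm S$ for every $\alpha$ and with the $S_{\alpha}$ pairwise $\lecomm$-incomparable. Applying Lemma~\ref{lemma-build-I}(b) to these inequalities places each $\bousfield{\I{S_{\alpha}}}$ in the interval $[\bousfield{\I{S}},\bousfield{\I{T}}]$, while the pairwise $\lecomm$-incomparability of the $S_{\alpha}$ translates, again via Lemma~\ref{lemma-build-I}(b), into pairwise incomparability of the Bousfield classes $\bousfield{\I{S_{\alpha}}}$.

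None of the steps presents a real obstacle; the only point requiring care is the reversal of order induced by $S \mapsto \bousfield{\I{S}}$, which could otherwise lead to building the antichain outside the desired interval. Once one keeps the inclusions $T \lecomm S_{\alpha} \lecomm S$ straight, the proof is essentially identical to the $\M{\cdot}$ case.
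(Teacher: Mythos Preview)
Your proposal is correct and follows exactly the approach the paper indicates: the paper's proof consists solely of the sentence ``dualize the arguments in Corollary~\ref{cor-uncountable} and Remark~\ref{remark-uncountable},'' and you have carried out precisely that dualization, correctly tracking the order reversal under $S \mapsto \bousfield{\I{S}}$ via Lemma~\ref{lemma-build-I}(b).
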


As another application, we have another tensor product computation.

\begin{proposition}\label{prop-I-smash-I}
Fix subsets $S$ and $T$ of $\N$.  If $S$ is infinite, then $\I{S}
\Smash \I{T} = 0$.
\end{proposition}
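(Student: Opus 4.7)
The plan is to split the argument into two cases according to whether $T$ is finite or infinite; in each case the goal is to reduce to an application of Lemma~\ref{lemma-smash}, using the hypothesis that $S$ is infinite as the source of the relevant infinity condition.

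First suppose $T$ is finite. As noted at the end of Section~\ref{sec-notation} (and made explicit in the proof of Lemma~\ref{lemma-build-I-from-k}), $\I{T}$ is then a suspension of $\M{T}$, so it suffices to show that $\I{S} \Smash \M{T} = 0$. Lemma~\ref{lemma-smash} identifies this vanishing with the condition that $T\mycomp \cap S$ be infinite, and since $S$ is infinite while $T$ is finite, $T\mycomp \cap S = S - T$ is infinite.

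Now suppose $T$ is infinite. I would use the presentation of $\I{S}$ as a filtered colimit from the proof of Lemma~\ref{lemma-build-I-from-k}: enumerating $S = \{i_1, i_2, \dotsc\}$ and setting $S_n = \{i_1, \dotsc, i_n\}$, the maps $1 \otimes f_{i_{n+1}} \colon \I{S_n} \to \I{S_{n+1}}$ exhibit $\I{S}$ as $\colim_n \I{S_n}$. Since the smash product commutes with filtered colimits,
\[
\I{S} \Smash \I{T} \cong \colim_n \bigl( \I{S_n} \Smash \I{T} \bigr).
\]
Each $S_n$ is finite, so each $\I{S_n}$ is a suspension of $\M{S_n}$, and hence $\I{S_n} \Smash \I{T}$ is a suspension of $\M{S_n} \Smash \I{T}$. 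By Lemma~\ref{lemma-smash}, $\M{S_n} \Smash \I{T} = 0$ provided $S_n\mycomp \cap T$ is infinite; this holds because $S_n\mycomp$ is cofinite in $\N$ and $T$ is infinite. Thus every term in the colimit vanishes, and so does the colimit.

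The main subtlety is the asymmetry of Lemma~\ref{lemma-smash}, which directly handles only the smash product of an $\M$ with an $\I$; a single uniform argument is awkward because the two cases exploit the infinity of $S$ in different ways (once to produce an infinite complement $S-T$, once to produce an infinite sequence of finite subsets exhausting $S$). Splitting the argument by the cardinality of $T$ converts each case into exactly the input required by Lemma~\ref{lemma-smash}.
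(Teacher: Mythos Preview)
Your proof is correct but follows a different route from the paper's. The paper gives a single uniform argument: since $S$ is infinite, Lemma~\ref{lemma-smash} shows $\I{S} \Smash \M{S\mycomp} = 0$, so $\M{S\mycomp}$ lies in the localizing subcategory $\{X : \I{S} \Smash X = 0\}$; then Corollary~\ref{cor-build-k} and Lemma~\ref{lemma-build-I-from-k} place $k$ and hence every $\I{T}$ in that subcategory as well. No case split is needed, and the argument works purely through the abstract ``built from'' relation. Your approach instead unpacks the colimit presentation of $\I{S}$ explicitly and applies Lemma~\ref{lemma-smash} termwise; this is more concrete, but the case division is somewhat artificial---your second case uses only that $T$ is infinite, not that $S$ is, so together with the commutativity of the smash product it already handles everything. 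The paper's version trades that explicitness for a cleaner one-pass argument through the localizing-subcategory formalism.
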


For example, if $S$ is infinite, then $\I{S} \Smash \I{S} = 0$ and
$\I{S} \Smash k=0$.

\begin{proof}
For fixed $S$, the collection of objects $X$ so that $\I{S} \Smash
X=0$ forms a localizing subcategory.  By Lemma~\ref{lemma-smash},
since $S$ is infinite, this subcategory contains $\M{S\mycomp}$, and
therefore it contains $k$ and $\I{T}$, by Corollary~\ref{cor-build-k}
and Lemma~\ref{lemma-build-I-from-k}.  Thus $\I{S} \Smash \I{T} = 0$.
\end{proof}

If $S$ and $T$ are both finite, then
$\bousfield{\I{S}}=\bousfield{\I{T}}=\bousfield{k}$, and so $\I{S}
\Smash \I{T} \neq 0$.  Thus the converse to
Proposition~\ref{prop-I-smash-I} holds.  Summarizing, we have the following.

\begin{corollary}\label{cor-smash-summary}
Fix subsets $S$ and $T$ of $\N$.
\begin{enumerate}
\item $\M{S} \Smash \I{T} = 0$ if and only if $S\mycomp \cap T$ is
infinite.
\item $\I{S} \Smash \I{T} = 0$ if and only if $S \cup T$ is infinite.
\item $\M{S} \Smash \M{T} \neq 0$ for all $S$, $T$.
\end{enumerate}
\end{corollary}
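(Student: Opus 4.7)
The plan is to treat the three claims in turn, reducing the positive assertions to the single basic fact that $k \Smash k \neq 0$, which is immediate from $H^{0}(k \ltensor{\Alg} k) = k \otimes_{\Alg} k = k$. Part (a) is then a direct restatement of Lemma~\ref{lemma-smash}, with nothing to prove.

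For part (b), I would argue both directions. If $S \cup T$ is infinite, then at least one of $S$ or $T$ is infinite; by the symmetry of the smash product and Proposition~\ref{prop-I-smash-I}, $\I{S} \Smash \I{T} = 0$. Conversely, if $S \cup T$ is finite, then both $S$ and $T$ are finite, so $S \comm \emptyset \comm T$. Using the observation from Section~\ref{sec-notation} that $\I{S}$ is a suspension of $\M{S}$ when $S$ is finite, together with Lemma~\ref{lemma-build-M}(b), we obtain $\bousfield{\I{S}} = \bousfield{\M{S}} = \bousfield{k}$ and similarly for $T$. Consequently $\I{S} \Smash \I{T} = 0$ is equivalent to $k \Smash k = 0$, which fails.

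For part (c), I would argue by contradiction: suppose $\M{S} \Smash \M{T} = 0$. By Corollary~\ref{cor-build-k}, $k$ lies in the localizing subcategory generated by $\M{S}$; since $- \Smash \M{T}$ is an exact, coproduct-preserving functor, $k \Smash \M{T}$ lies in the localizing subcategory generated by $\M{S} \Smash \M{T} = 0$, forcing $k \Smash \M{T} = 0$. Repeating the argument with $k$ built from $\M{T}$ gives $k \Smash k = 0$, a contradiction. There is no real obstacle in the proof; the main conceptual point is the common reduction to $k \Smash k \neq 0$, together with the fact that both $\M{S}$ and $\I{S}$ are Bousfield equivalent to $k$ when $S$ is finite.
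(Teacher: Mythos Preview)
Your proof is correct and follows essentially the same path as the paper: part (a) is Lemma~\ref{lemma-smash}, part (b) combines Proposition~\ref{prop-I-smash-I} with the observation that $\bousfield{\I{S}}=\bousfield{k}$ for finite $S$, and part (c) reduces to the nonvanishing of $k \Smash k$. The only cosmetic difference is that for (c) the paper shortcuts your two-step reduction by invoking part (a) with $T=\emptyset$ (so $\M{S}\Smash \I{\emptyset}\neq 0$) together with $\bousfield{\M{T}}\geq\bousfield{\I{\emptyset}}$, rather than descending all the way to $k\Smash k$.
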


Since $k=\M{\emptyset}=\I{\emptyset}$, we can also read off its
behavior under the derived tensor product.

\begin{proof}
Part (a) is Lemma~\ref{lemma-smash}.  Part (b) is
Proposition~\ref{prop-I-smash-I} and its converse, since $S \cup T$ is
infinite if and only if either $S$ or $T$ is infinite.  Part (c)
follows from part (a), for instance: we know that $\bousfield{\M{T}}
\geq \bousfield{\M{\emptyset}} = \bousfield{\I{\emptyset}}$ for any
$T$, and part (a) tells us that $\M{S} \Smash \I{\emptyset} \neq 0$.
\end{proof}

\section{More Bousfield class calculations}\label{sec-more-bousfield}

How do the Bousfield classes of the $\M{S}$'s fit together in the
Bousfield lattice?  We know how they are ordered; can any of them be
built from the others if they are not Bousfield equivalent?  The
answer is no.

Write $\vee$ for the least upper bound operation on Bousfield
classes.  This operation is easy to understand: it is easy to verify
that $\bousfield{X} \vee \bousfield{Y} = \bousfield{X \oplus Y}$,
and more generally, that 
\[
\bigvee_{\alpha} \bousfield{X_{\alpha}} =
\bousfield{\bigoplus_{\alpha} X_{\alpha}}.
\]

\begin{lemma}\label{lemma-join}
Fix subsets $S$ and $T$ of $\N$.  
\begin{enumerate}
\item Then $\bousfield{\M{S \cup T}} \geq \bousfield{\M{S}} \vee
\bousfield{\M{T}}$.
\item If $S \not \lecomm T$ and $T \not \lecomm S$, which is to say
if $\bousfield{\M{S}}$ and $\bousfield{\M{T}}$ are not comparable in
the Bousfield lattice, then this inequality is strict:
$\bousfield{\M{S \cup T}} > \bousfield{\M{S}} \vee \bousfield{\M{T}}$.
\end{enumerate}
\end{lemma}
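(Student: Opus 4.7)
\medskip

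\noindent\textbf{Proof plan.}
For part (a), I would simply invoke Lemma~\ref{lemma-build-M}(b): since $S$ and $T$ are both subsets of $S\cup T$, we have $S \lecomm S\cup T$ and $T \lecomm S\cup T$, so $\bousfield{\M{S}} \leq \bousfield{\M{S\cup T}}$ and $\bousfield{\M{T}} \leq \bousfield{\M{S\cup T}}$. Since the join is defined as the least upper bound, we conclude $\bousfield{\M{S}} \vee \bousfield{\M{T}} \leq \bousfield{\M{S\cup T}}$.

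For part (b), the strategy is to exhibit a single witness $W$ in the Bousfield lattice that distinguishes the two classes, namely an object $W$ satisfying $\M{S} \Smash W = 0$ and $\M{T}\Smash W = 0$ but $\M{S\cup T}\Smash W \neq 0$. Such a $W$ lies in $\bousfield{\M{S}}\vee \bousfield{\M{T}}$ (interpreted as a set, this join is the intersection $\bousfield{\M{S}}\cap \bousfield{\M{T}}$) but not in $\bousfield{\M{S\cup T}}$, forcing the inequality to be strict. The natural candidate to try is $W = \I{U}$ for a cleverly chosen subset $U$ of $\N$, since Lemma~\ref{lemma-smash} gives us total control over when $\M{A}\Smash \I{U}$ vanishes.

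The combinatorial heart of the argument is finding the right $U$. Applying Lemma~\ref{lemma-smash}, I need $U\subseteq \N$ such that $S\mycomp\cap U$ and $T\mycomp \cap U$ are both infinite, while $(S\cup T)\mycomp \cap U = S\mycomp \cap T\mycomp \cap U$ is finite. The choice
\[
U \;=\; (S - T)\,\cup\,(T - S)
\]
fits the bill: one computes $U\cap S\mycomp = T-S$ and $U\cap T\mycomp = S-T$, while $U\cap S\mycomp\cap T\mycomp = \emptyset$. So it remains to verify that the two sets $S-T$ and $T-S$ are infinite. This is exactly where the hypothesis is used: if $T-S$ were finite, then $S\cap T$ would be cofinite in $T$, so $T$ would be commensurable with a subset of $S$, contradicting $T\not\lecomm S$. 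Symmetrically, $S\not\lecomm T$ forces $S-T$ to be infinite.

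The main obstacle is just spotting the right $U$; once it is in hand, the verification reduces to three applications of Lemma~\ref{lemma-smash}. I do not anticipate any technical difficulty beyond this combinatorial observation.
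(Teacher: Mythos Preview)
Your proof is correct and follows essentially the same approach as the paper: both parts (a) and (b) use the same tools (Lemma~\ref{lemma-build-M}(b) for the inequality, and a witness of the form $\I{U}$ together with Lemma~\ref{lemma-smash} for strictness). The only cosmetic difference is that the paper takes $U = S\cup T$ rather than the symmetric difference; since these two sets differ only by $S\cap T$, which lies in both $S$ and $T$ and hence contributes nothing to the relevant intersections with $S\mycomp$, $T\mycomp$, or $(S\cup T)\mycomp$, the verifications are identical.
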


Of course, if for example $S \lecomm T$, then $\bousfield{\M{S}} \leq
\bousfield{\M{T}}$ and $S \cup T \comm T$, so we have equality:
$\bousfield{\M{S \cup T}} = \bousfield{\M{T}} = \bousfield{\M{S}} \vee
\bousfield{\M{T}}$.

\begin{proof}
(a) The inequality $\bousfield{\M{S \cup T}} \geq \bousfield{\M{S}}
\vee \bousfield{\M{T}}$ follows from Lemma~\ref{lemma-build-M}(b).

(b) If $S \not \lecomm T$ and $T \not \lecomm S$, then $S \cup T - S$
and $S \cup T - T$ are both infinite.  Therefore by
Lemma~\ref{lemma-smash},
\[
\M{S} \Smash \I{S \cup T} = 0 = \M{T} \Smash \I{S \cup T},
\]
while $\M{S \cup T} \Smash \I{S \cup T} \neq 0$.
Therefore the inequality from part (a) is strict.
\end{proof}

Dually, we have the following.

\begin{lemma}
Fix subsets $S$ and $T$ of $\N$.  
\begin{enumerate}
\item Then $\bousfield{\I{S \cap T}} \geq \bousfield{\I{S}} \vee
\bousfield{\I{T}}$.
\item If $S \not \lecomm T$ and $T \not \lecomm S$, which is to say
if $\bousfield{\I{S}}$ and $\bousfield{\I{T}}$ are not comparable in
the Bousfield lattice, then this inequality is strict:
$\bousfield{\I{S \cap T}} > \bousfield{\I{S}} \vee \bousfield{\I{T}}$.
\end{enumerate}
\end{lemma}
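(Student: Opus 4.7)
The plan is to prove (a) as a direct application of Lemma~\ref{lemma-build-I}(b) and to prove (b) by exhibiting a witness object that is killed under smashing with $\I{S}$ and with $\I{T}$ but not with $\I{S \cap T}$. The whole argument will be formally dual to the proof of Lemma~\ref{lemma-join}, with $\M{-}$ and $\I{-}$ interchanged and unions replaced by intersections.

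For part (a), I would note that $S \cap T$ is itself a subset of $S$, so trivially $S \cap T \lecomm S$, and Lemma~\ref{lemma-build-I}(b) then yields $\bousfield{\I{S}} \leq \bousfield{\I{S \cap T}}$. Symmetrically $\bousfield{\I{T}} \leq \bousfield{\I{S \cap T}}$, and taking the join of these two inequalities gives (a).

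For part (b), the natural witness -- dual to the object $\I{S \cup T}$ used in Lemma~\ref{lemma-join}(b) -- is $Y = \M{S \cap T}$. Before using it, I would unpack the hypothesis $S \not\lecomm T$: since $S \cap T \subseteq T$, this incomparability forces $S \setminus T$ to be infinite (otherwise $S \comm S \cap T$ would give $S \lecomm T$), and symmetrically $T \setminus S$ is infinite. Three applications of Corollary~\ref{cor-smash-summary}(a) then finish the job: $(S \cap T)\mycomp \cap S = S \setminus T$ is infinite, so $\M{S \cap T} \Smash \I{S} = 0$; similarly $\M{S \cap T} \Smash \I{T} = 0$; but $(S \cap T)\mycomp \cap (S \cap T) = \emptyset$ is finite, so $\M{S \cap T} \Smash \I{S \cap T} \neq 0$. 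Therefore $\M{S \cap T}$ lies in $\bousfield{\I{S}} \vee \bousfield{\I{T}}$ but not in $\bousfield{\I{S \cap T}}$, and the inequality in (a) is strict.

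The only conceptual step is spotting the correct witness, which is essentially forced by the formal duality with Lemma~\ref{lemma-join}(b); once $Y = \M{S \cap T}$ is in hand, nothing beyond three invocations of Corollary~\ref{cor-smash-summary}(a) is needed.
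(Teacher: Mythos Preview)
Your proof is correct and essentially identical to the paper's: part~(a) via Lemma~\ref{lemma-build-I}(b), and part~(b) via the witness $\M{S\cap T}$ together with Lemma~\ref{lemma-smash} (equivalently Corollary~\ref{cor-smash-summary}(a)), after observing that $S\setminus T$ and $T\setminus S$ are infinite. The only cosmetic difference is that the paper cites Lemma~\ref{lemma-smash} directly rather than its restatement in Corollary~\ref{cor-smash-summary}.
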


\begin{proof}
(a) See Lemma~\ref{lemma-build-I}(b).

(b) Since $S \not \lecomm T$ and $T \not
\lecomm S$, both $S - S \cap T$ and $T - S \cap T$ are infinite.
Therefore by Lemma~\ref{lemma-smash},
\[
\I{S} \Smash \M{S \cap T} = 0 = \I{T} \Smash \M{S \cap T},
\]
while $\I{S \cap T} \Smash \M{S \cap T} \neq 0$.
\end{proof}

\begin{notation}\label{notation-M}
For any subset $S$ of $\N$, let $\k{S}$ denote the trivial
$\alg{S}$-module $k$.  Given a partition $\N = A \amalg B \amalg C$,
define a module $\module{A,B,C}$ over $\Alg \cong \alg{A} \otimes
\alg{B} \otimes \alg{C}$ by
\[
\module{A,B,C} = \M{A} \otimes \k{B} \otimes \I{C}.
\]
\end{notation}

Then note, for example, that if $A$ and $A'$ are disjoint subsets of
$\N$, as a $\alg{A\cup A'} \cong \alg{A} \otimes \alg{A'}$-module, we
have $\I{A \cup A'} \cong \I{A} \otimes \I{A'}$. 
We can compute derived tensor products involving these objects with
computations like the following:
\begin{align}\label{eqn-smash-MIT}
\begin{split}
\module{A,B,C} \Smash \M{S} &= \left(\M{A} \otimes \k{B} \otimes
\I{C}\right) \Smash \M{S} \\
& \cong (\M{A} \ltensor{\alg{A}} \M{A \cap S}) \\
& \quad \otimes (\k{B} \ltensor{\alg{B}} \M{B \cap S}) \\
& \quad \otimes (\I{C} \ltensor{\alg{C}} \M{C \cap S}).
\end{split}
\end{align}
In the case of $\module{A,B,C} \Smash \module{S,T,U}$, there is a
similar decomposition, but into nine tensor factors rather than
three, yielding the following.

\begin{lemma}\label{lemma-smash-MIT}
Given two partitions $A \amalg B \amalg C$ and $S \amalg T \amalg U$
of $\N$, then
\[
\module{A,B,C} \Smash \module{S,T,U} = 0
\]
if and only if $(C \cap T) \cup (C \cap U) \cup (B \cap U)$ is infinite.
\end{lemma}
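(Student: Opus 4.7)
The plan is to upgrade the three-fold factorization used in equation~\eqref{eqn-smash-MIT} to a nine-fold one indexed by the refinement $\N = \coprod_{X,Y} (X \cap Y)$ with $X \in \{A,B,C\}$, $Y \in \{S,T,U\}$. With respect to this partition, $\Alg \cong \bigotimes_{(X,Y)} \alg{X \cap Y}$, and each of $\module{A,B,C}$ and $\module{S,T,U}$ splits as an external tensor of its natural pieces on these subalgebras: the $(X,Y)$-piece of $\module{A,B,C}$ is $\M{A \cap Y}$, $\k{B \cap Y}$, or $\I{C \cap Y}$ according to whether $X = A$, $B$, or $C$; and dually for $\module{S,T,U}$. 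Consequently, as a direct generalization of~\eqref{eqn-smash-MIT}, the derived smash product factors as an external tensor product over $k$ of nine derived tensor products, one over each $\alg{X \cap Y}$.

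The second step is to evaluate each of the nine factors and see which can be zero. In each of the six pairs $(X,Y)$ with $X = A$ or $Y = S$, one of the two tensor factors is $\M{X \cap Y}$, which as an $\alg{X \cap Y}$-module is free of rank one; the derived tensor therefore collapses to the other factor (one of $\M{X \cap Y}$, $\k{X \cap Y}$, or $\I{X \cap Y}$) and is nonzero. The factor at $(B,T)$ is $k \ltensor{\alg{B \cap T}} k = \Tor_{*}^{\alg{B \cap T}}(k,k)$, which contains $k$ in degree $0$ and is therefore nonzero. This leaves the three factors
\[
k \ltensor{\alg{B \cap U}} \I{B \cap U}, \qquad \I{C \cap T} \ltensor{\alg{C \cap T}} k, \qquad \I{C \cap U} \ltensor{\alg{C \cap U}} \I{C \cap U}.
\]
Each of these vanishes precisely when the relevant set is infinite, by applying Lemma~\ref{lemma-smash} inside the ambient subalgebra (for the first two, viewing $k$ as $\M{\emptyset}$) or Proposition~\ref{prop-I-smash-I} (for the third).

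Since an external tensor over the field $k$ is zero iff one of its factors is zero, the smash product vanishes iff at least one of $B \cap U$, $C \cap T$, $C \cap U$ is infinite, which is equivalent to $(C \cap T) \cup (C \cap U) \cup (B \cap U)$ being infinite, as claimed. The one point that requires a bit of care is justifying the nine-fold derived decomposition in the first step; this is the expected main obstacle, though it is routine, reducing to choosing projective (indeed, free) resolutions of each piece over its respective $\alg{X \cap Y}$ and then tensoring them together over $k$ to obtain a free resolution of $\module{A,B,C}$ over $\Alg$ that is compatible with the factorization.
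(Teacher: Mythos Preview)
Your proof is correct and follows essentially the same approach as the paper: the paper's proof is the one-line remark that the result ``follows from tensor product computations like~\eqref{eqn-smash-MIT}, together with Corollary~\ref{cor-smash-summary},'' and your argument is exactly a careful unpacking of that sentence into the nine-fold factorization and a case-by-case application of the same vanishing criteria (your citations of Lemma~\ref{lemma-smash} and Proposition~\ref{prop-I-smash-I} are precisely the ingredients bundled in Corollary~\ref{cor-smash-summary}). Your added justification of the derived K\"unneth decomposition via termwise free resolutions is a welcome elaboration of a point the paper leaves implicit.
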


\begin{proof}
This follows from tensor product computations like
\eqref{eqn-smash-MIT}, together with
Corollary~\ref{cor-smash-summary}.
\end{proof}

The results above give us the following.

\begin{proposition}\label{prop-MIT}
Given partitions $A \amalg B \amalg C$ and $A' \amalg B' \amalg C'$ of
$\N$, then the following are equivalent:
\begin{enumerate}
\item [(1)] $(A,B,C)$ is less than or equal to $(A',B',C')$ in the
left lexicographic commensurability order -- that is, either $A
\lecomm A'$ and $A \not \comm A'$, or $A \comm A'$ and $B \lecomm B'$.
\item [(2)] $\module{A,B,C}$ may be built from $\module{A',B',C'}$.
\item [(3)] $\bousfield{\module{A,B,C}} \leq \bousfield{\module{A',B',C'}}$.
\end{enumerate}
\end{proposition}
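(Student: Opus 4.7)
The plan is to prove the chain $(2) \Rightarrow (3) \Rightarrow (1) \Rightarrow (2)$. The first implication is standard: tensoring with a fixed object is exact and preserves coproducts, so the ``built from'' relation is preserved by $- \Smash W$ and hence by $\bousfield{-}$, exactly as in the proof of Lemma~\ref{lemma-build-M}(b).

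For $(1) \Rightarrow (2)$, I would isolate three elementary ``downward'' moves. Fix $J \subseteq \N$, and note that tensoring over $k$ with a fixed $\alg{J\mycomp}$-module gives an exact, coproduct-preserving functor from $\derived{\alg{J}}$ to $\derived{\Alg}$, hence preserves ``built from''. The moves are: (i) the move $A \to B$ passing from $\module{A \cup J, B, C}$ to $\module{A, B \cup J, C}$, where the $\alg{J}$-factor changes from $\M{J}$ to $\k{J} = k$, and $k$ is built from $\M{J}$ by Corollary~\ref{cor-build-k}; (ii) the move $A \to C$, where the $\alg{J}$-factor changes from $\M{J}$ to $\I{J}$, built via $k$ by Corollary~\ref{cor-build-k} together with Lemma~\ref{lemma-build-I-from-k}; and (iii) the move $B \to C$, where the $\alg{J}$-factor changes from $k$ to $\I{J}$, which is Lemma~\ref{lemma-build-I-from-k} directly. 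For finite $J$ each of the three moves is reversible, since $\M{J}$, $k$, and $\I{J}$ then lie in one another's thick subcategories. Given the lex inequality $(A,B,C) \leq (A',B',C')$, I sequence the elementary moves so that every infinite displacement is downward, using the reversible finite moves to absorb small discrepancies.

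For $(3) \Rightarrow (1)$, I argue by contrapositive and construct a separating witness. Decompose $\N$ into the nine intersection blocks $E_{ij} = X_i \cap X'_j$, where $(X_1,X_2,X_3) = (A,B,C)$ and $(X'_1,X'_2,X'_3) = (A',B',C')$. Lemma~\ref{lemma-smash-MIT} translates the condition $\module{A,B,C} \Smash \module{S,T,U} = 0$ into the infinitude of a specific union of blocks, and likewise for $\module{A',B',C'} \Smash \module{S,T,U} = 0$. Taking $(S,T,U)$ to be a partition that aggregates whole blocks $E_{ij}$, the existence of a separating witness becomes a combinatorial question about which unions of $E_{ij}$ can be infinite. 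The failure of $(1)$ forces one of these blocks to be infinite in a pattern inconsistent with the lex order, and selecting $(S,T,U)$ accordingly yields the required $\module{S,T,U}$.

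The main obstacle is the bookkeeping for $(1) \Rightarrow (2)$: one must verify that the commensurability pattern encoded by the lex inequality is exactly what is needed to order the elementary moves so that no infinite displacement runs in a forbidden upward direction. Equivalently, the blocks $E_{ij}$ that the lex inequality forces to be finite must be precisely the ones that would otherwise require upward moves in the decomposition, and the reverse direction $(3) \Rightarrow (1)$ is essentially the same combinatorial analysis run backwards to extract a witness from a forbidden pattern of infinite blocks.
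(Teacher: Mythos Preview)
Your strategy coincides with the paper's: $(2)\Rightarrow(3)$ is formal, $(3)\Rightarrow(1)$ goes by contrapositive via a witness supplied by Lemma~\ref{lemma-smash-MIT} (the paper uses the specific witness $\module{C,B,A}$), and $(1)\Rightarrow(2)$ is meant to follow from your three elementary downward moves on tensor factors. However, the ``main obstacle'' you flag is not a matter of bookkeeping: the verification actually fails, and in fact the statement as written is false.

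The three blocks that would require upward moves are $E_{12}=A\cap B'$, $E_{13}=A\cap C'$, and $E_{23}=B\cap C'$; building $\module{A,B,C}$ from $\module{A',B',C'}$ by your moves needs all three to be finite. The first clause of (1), namely $A\lecomm A'$ with $A\not\comm A'$, forces only $A\setminus A'=E_{12}\cup E_{13}$ to be finite and says nothing about $E_{23}$. Take $A=\{1\}$, $B=C'=\{3,5,7,\dots\}$, $C=\{2,4,6,\dots\}$, $A'=\{1,2,4,6,\dots\}$, $B'=\emptyset$. Then (1) holds via its first clause, yet $B\cap C'$ is infinite, so over $\alg{B}$ one would have to build $k$ from $\I{B}$, which is impossible. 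Worse, (3) itself fails here: with $W=\module{C,B,A}$, Lemma~\ref{lemma-smash-MIT} gives $\module{A',B',C'}\Smash W=0$ (since $C'\cap B$ is infinite) but $\module{A,B,C}\Smash W\neq 0$ (since $A,B,C$ are disjoint). The paper's own argument for $(1)\Rightarrow(2)$ in this case treats only the $\alg{A}$- and $\alg{C}$-factors and never addresses the $\alg{B}$-factor, so it shares exactly this gap; the condition that actually makes your elementary-moves argument (and the paper's) go through is the conjunction $A\lecomm A'$ and $C'\lecomm C$, not the stated lexicographic inequality.
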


\begin{proof}
First suppose that $A \lecomm A'$.  By Lemma~\ref{lemma-build-M}, we
can build $\M{A}$ from $\M{A'}$.  By Lemmas~\ref{lemma-build-M} and
\ref{lemma-build-I}, we can build $\I{C}$ from $\M{C \cap A'} \otimes
\k{C \cap B'} \otimes \I{C \cap C'}$.  That is, as $\alg{C} =
\alg{C \cap A'} \otimes \alg{C \cap B'} \otimes \alg{C
\cap C'}$-modules, we can write
\[
\I{C} = \I{C \cap A'} \otimes \I{C \cap B'} \otimes \I{C \cap
C'},
\]
and then we can build this module from
\[
\M{C \cap A'} \otimes \k{C \cap B'} \otimes \I{C \cap C'}.
\]

Similarly, suppose that $A \comm A'$ and $B \lecomm B'$; in this case,
$C' \lecomm C$.  Then we can build $I(C)$ from
\[
\k{C \cap B'} \otimes I(C \cap C') = \k{C \cap B'} \otimes
I(C').
\]
This finishes the proof that (1) implies (2).  (2) implies (3) in
general.

Now assume that (1) fails.  That is, assume either that $A \not
\lecomm A'$, or that $A \comm A'$ and $B \not \lecomm B'$.  If $A \not
\lecomm A'$, then $A$ is not commensurable with any subset of $A'$, so
$A$ is not commensurable with $A \cap A'$; this means that $A - A \cap
A'$ is infinite.  Lemma~\ref{lemma-smash-MIT} then tells us that 
\[
\module{A,B,C} \Smash \module{C, B, A} \neq 0, \quad 
\module{A',B',C'} \Smash \module{C, B, A} = 0.
\]
Similarly, if $A \comm B$ and $B \not \lecomm B'$, then $B-B\cap B'$
is infinite, as is $C'-C \cap C'$; indeed, 
\[
B-B \cap B' \comm C'-C \cap C'.
\]
Therefore we see again that
\[
\module{A,B,C} \Smash \module{C, B, A} \neq 0, \quad 
\module{A',B',C'} \Smash \module{C, B, A} = 0.
\]
Thus if (1) fails, then $\bousfield{\module{A,B,C}} \not \leq
\bousfield{\module{A',B',C'}}$, so (3) fails.  This finishes the
proof.
\end{proof}

We consider the subposet $\latt$ of the Bousfield lattice $\B$
consisting of the classes $\bousfield{\module{A,B,C}}$.  Then $\latt$
has a largest element $\hat{1} := \bousfield{\Alg}$ and a smallest
element $\hat{0}:=\bousfield{0}$.  We show in
Corollary~\ref{cor-minimal} that $\bousfield{\I{\N}}$ is the unique
minimum nonzero element in $\B$ (hence in $\latt$): for any nonzero
object $X$ in $\derived{\Alg}$, and in particular if $X$ happens to be
of the form $X=\module{A,B,C}$, then $\bousfield{X} \geq
\bousfield{\I{\N}} > \hat{0}$.

The poset $\latt$ has the property that, given
\[
\bousfield{\I{\N}} \leq \bousfield{\module{A,B,C}} <
\bousfield{\module{A',B',C'}} \leq \hat{1},
\]
the interval
\[
[\bousfield{\module{A,B,C}}, \bousfield{\module{A',B',C'}}]
\]
in $\latt$ contains an uncountable antichain -- see
Remark~\ref{remark-uncountable} and Corollary~\ref{cor-uncountable}.

Now define $\Latt$ to be the lattice obtained by closing $\latt$ under
arbitrary joins in $\B$, so the elements of $\Latt$ are the Bousfield
classes of direct sums of the objects $\module{A,B,C}$.  By
Theorem~\ref{thm-big}, $\Latt$ has cardinality at least
$2^{2^{\aleph_{0}}}$.

We end this section with a few questions.

\begin{question}
Is $\B = \Latt$?  That is, is every object in $\derived{\Alg}$
Bousfield equivalent to an object of the form
\[
\bigoplus_{\alpha} \module{A_{\alpha},B_{\alpha},C_{\alpha}}?
\]
\end{question}

Note that a positive answer would provide headway toward solving
\cite[Conjecture 9.1]{hovey-palmieri;bousfield}, that every localizing
subcategory in $\derived{\Alg}$ is a Bousfield class: according to
\cite[Proposition 9.2]{hovey-palmieri;bousfield}, the conjecture is
equivalent to the statement that $X$ can be built from $Y$ if and only
if $\bousfield{X} \leq \bousfield{Y}$, for all $X$ and $Y$.  Results
like Proposition~\ref{prop-MIT} seem like progress in this direction.

On the other hand, a positive answer to this question may very well be
too much to ask.  Here is a variant.

\begin{question}\label{question-bousfield}
Let $R$ be a commutative ring.  Is every object in $\derived{R}$
Bousfield equivalent to an $R$-module?
\end{question}

This is true if $R$ is noetherian, by Neeman's work
\cite{neeman;derived}.  In general, one might guess that any object
$X$ is Bousfield equivalent to the sum of its homology groups,
$\bigoplus_{i} H^{i}(X)$.


Finally, we have the following.

\begin{question}\label{question-set}
Let $R$ be a commutative ring.  Does the Bousfield lattice for
$\derived{R}$ form a set?
\end{question}

The answer is ``yes'' if $R$ is noetherian or countable: in the
noetherian case, Neeman's work \cite{neeman;derived} establishes a
bijection between the Bousfield lattice and the lattice of subsets of
$\Spec R$.  In the countable case, $\derived{R}$ is a Brown category
-- see the discussion just before Notation~\ref{notation-MIT} -- and
so Ohkawa's theorem holds, as explained in
\cite{dwyer-palmieri;ohkawa}.

\section{Objects with large tensor-nilpotence height}
\label{sec-tensor-height}

If $X$ is an object in $\derived{\Alg}$, write $X^{(n)}$ for the
$n$-fold derived tensor product of $X$ with itself.

\begin{theorem}\label{thm-tensor}
\begin{enumerate}
\item For each $n \geq 1$, there is an object $X$ so that
$X^{(n)}$ is nonzero while $X^{(n+1)}$ is zero.
\item There is an object $Y$ for which
\[
\bousfield{Y} > \bousfield{Y^{(2)}} > \bousfield{Y^{(3)}} >
\dotsb.
\]
\end{enumerate}
\end{theorem}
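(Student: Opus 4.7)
The plan is to construct $X_n$ as a direct sum of $n$ Brown--Comenetz-type modules that annihilate on the diagonal, and then to build $Y$ by stacking such $X_n$'s on disjoint infinite parts of $\N$ so that the different $X_n$'s are mutually orthogonal under the smash product.

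\textbf{Part (a).}
Partition $\N = P_1 \amalg P_2 \amalg \dotsb \amalg P_n$ into $n$ infinite subsets and set
\[
Y_i = \module{\N \setminus P_i, \emptyset, P_i} = \M{\N \setminus P_i} \otimes \I{P_i}, \qquad X_n = \bigoplus_{i=1}^{n} Y_i.
\]
By Lemma~\ref{lemma-smash-MIT}, $Y_i \Smash Y_j = 0$ iff $P_i \cap P_j$ is infinite, which occurs precisely when $i = j$. Computing $Y_1 \Smash \dotsb \Smash Y_n$ index-by-index, at each $j \in P_l$ one factor contributes $\I$ and the other $n-1$ contribute the free module $\alg{\{j\}}$, giving $\I$ locally; globally the product collapses to $\I{\N} \neq 0$, so $X_n^{(n)} \neq 0$. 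On the other hand, any $(n+1)$-fold product $Y_{i_1} \Smash \dotsb \Smash Y_{i_{n+1}}$ must repeat some $i$ by pigeonhole, and the repeated factor $Y_i \Smash Y_i$ vanishes; thus $X_n^{(n+1)} = 0$.

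\textbf{Part (b).}
I would run the above construction in parallel on disjoint pieces. Partition $\N = \bigsqcup_{n \geq 1} \N_n$ into infinite parts, and within each $\N_n$ further partition $\N_n = \bigsqcup_{i=1}^{n} P_{n,i}$ into $n$ infinite parts. Set
\[
Y_{n,i} = \module{\N_n \setminus P_{n,i}, \N \setminus \N_n, P_{n,i}}, \qquad X_n = \bigoplus_{i=1}^{n} Y_{n,i}, \qquad Y = \bigoplus_{n \geq 1} X_n.
\]
The same argument as in (a) gives $X_n^{(n)} \neq 0$ and $X_n^{(n+1)} = 0$, and hence $X_n^{(m)} \neq 0$ for every $m \leq n$. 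The new ingredient is level-orthogonality: for $n \neq n'$ and any $i, i'$, Lemma~\ref{lemma-smash-MIT} applied to $Y_{n,i} \Smash Y_{n',i'}$ shows $C \cap T = P_{n,i} \cap (\N \setminus \N_{n'}) = P_{n,i}$ is infinite, so $Y_{n,i} \Smash Y_{n',i'} = 0$, and hence $X_n \Smash X_{n'} = 0$. Therefore $Y^{(m)} = \bigoplus_n X_n^{(m)}$, which contains $X_m^{(m)} \neq 0$ for every $m$.

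For the strict drop, I claim $W = X_{m+1}$ distinguishes $Y^{(m)}$ from $Y^{(m+1)}$. Using orthogonality, $Y^{(m+1)} \Smash X_{m+1} = \bigoplus_{n} X_n^{(m+1)} \Smash X_{m+1}$; the summands with $n \neq m+1$ vanish because they contain the factor $X_n \Smash X_{m+1} = 0$, and the $n = m+1$ summand equals $X_{m+1}^{(m+2)} = 0$. Hence $X_{m+1} \in \bousfield{Y^{(m+1)}}$. Conversely, $Y^{(m)} \Smash X_{m+1}$ contains $X_{m+1}^{(m)} \Smash X_{m+1} = X_{m+1}^{(m+1)} \neq 0$, so $X_{m+1} \not\in \bousfield{Y^{(m)}}$. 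This yields the required infinite strict chain. The main obstacle is locating such a distinguishing acyclic at each stage: natural candidates like $X_m^{(m)}$ already lie in $\bousfield{Y^{(m)}}$ because $X_m^{(m)}$ is itself tensor-nilpotent of order two; the choice $W = X_{m+1}$ succeeds precisely because $X_{m+1}$ just barely survives $m$-fold smashing with itself but not $(m+1)$-fold, while level-orthogonality silences every contribution from levels $n \neq m+1$.
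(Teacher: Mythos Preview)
Your proof of part (a) is essentially identical to the paper's: both take $X=\bigoplus_{i=1}^{n}\M{S_i\mycomp}\otimes\I{S_i}$ for pairwise disjoint infinite sets $S_i$, use $\I{S}\otimes\M{S\mycomp}$ self-annihilation together with the pigeonhole principle for $X^{(n+1)}=0$, and exhibit the nonvanishing summand $\I{S_1\cup\dotsb\cup S_n}$ in $X^{(n)}$.

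For part (b) your argument is correct but takes a genuinely different route. The paper does \emph{not} stack the part~(a) constructions on disjoint levels; instead it simply repeats the part~(a) construction with infinitely many summands, setting $Y=\bigoplus_{p}\I{P_p}\otimes\M{P_p\mycomp}$ where $P_p=\{p^k:k\in\N\}$ ranges over all primes. Then $Y^{(m)}$ is the sum over $m$-element subsets of primes of the corresponding $\I{\cup}\otimes\M{(\cup)\mycomp}$, and the paper distinguishes $\bousfield{Y^{(m-1)}}$ from $\bousfield{Y^{(m)}}$ with the single test module $\M{U\mycomp}$, where $U=\bigcup_{j\ge m}P_{p(j)}$: any $m$-fold union of the $P_p$'s must use some prime $\ge p(m)$ and hence meets $U$ infinitely, whereas the $(m-1)$-fold union $P_{p(1)}\cup\dotsb\cup P_{p(m-1)}$ misses $U$ entirely. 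This is shorter and keeps the object $Y$ within the same family $\module{A,\emptyset,C}$ as in part~(a). Your stacked construction with level-orthogonality also works and has the pleasant feature that the distinguishing acyclic $X_{m+1}$ is itself a summand of $Y$, but it requires the extra bookkeeping with the middle slot $B=\N\setminus\N_n$ (and implicitly the nonvanishing of the iterated $\k{B}$-smash, which is fine since $\Tor^{\alg{B}}(k,k)\ne 0$).
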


\begin{proof}
(The idea for this proof arose from a conversation with James
Zhang.)

(a) Fix an integer $n \geq 1$ and choose subsets $S_{i} \subseteq \N$, $1
\leq i \leq n$, so that each $S_{i}$ is infinite and the $S_{i}$'s
are pairwise disjoint.  Define a $\Alg$-module $X$ by
\[
X = \bigoplus_{i} \I{S_{i}} \otimes \M{S_{i}\mycomp}
= \bigoplus_{i} \module{S_{i}\mycomp, \emptyset, S_{i}}
\]
with notation as in \ref{notation-M}.  For disjoint subsets $S, T
\subseteq \N$, we have
\[
\left( \I{S} \otimes \M{S\mycomp} \right) \Smash \left( \I{T}
\otimes \M{T\mycomp} \right) = \I{S \cup T} \otimes \M{(S \cup
T)\mycomp},
\]
and if $S$ is infinite, then 
\[
\left( \I{S} \otimes \M{S\mycomp} \right) \Smash \left( \I{S}
\otimes \M{S\mycomp} \right) = 0.
\]
Therefore we have
\begin{align*}
X \Smash X &= \bigoplus_{1 \leq i<j \leq n} \I{S_{i} \cup S_{j}} \otimes
\M{(S_{i} \cup S_{j})\mycomp}, \\
X^{(m)} &= \bigoplus_{1 \leq i_{1} < \dotsb < i_{m} \leq n}
\I{S_{i_{1}} \cup \dotsb \cup S_{i_{m}}} \otimes \M{(S_{i_{1}} 
\cup \dotsb \cup S_{i_{m}})\mycomp}, \\
X^{(n)} &= \I{S_{1} \cup \dotsb \cup S_{n}} \neq 0, \\
X^{(n+1)} &= 0.
\end{align*}
This completes the proof of part (a).

The proof of part (b) is similar.  As in the proof of
Corollary~\ref{cor-uncountable}, for each prime number $p$, we let
$P_{p}$ be the set of powers of $p$.  Then define $Y$ by
\[
Y = \bigoplus_{p} \I{P_{p}} \otimes \M{P_{p}\mycomp}.
\]
Then, computing as in part (a), we have
\[
Y^{(m)} = \bigoplus_{p_{1} < \dotsb < p_{m}} \I{P_{p_{1}} \cup \dotsb
\cup P_{p_{m}}} \otimes \M{(P_{p_{1}} \cup \dotsb
\cup P_{p_{m}})\mycomp}.
\]
List the prime numbers as $p(1)$, $p(2)$, $p(3)$, \dots, and let 
\[
U= P_{p(m)} \cup P_{p(m+1)} \cup P_{p(m+2)} \cup \dotsb.
\]
Then $\M{U\mycomp} \Smash Y^{(m)}$ is zero by
Lemma~\ref{lemma-smash}, because $U\mycomp$ has an infinite
intersection with any $m$-fold union of the $P_{p}$'s.  On the other
hand, $\M{U\mycomp} \Smash Y^{(m-1)}$ is nonzero, because $U\mycomp
\cap (P_{p(1)} \cup \dotsb \cup P_{p(m-1)})$ is empty.
\end{proof}

As in the usual stable homotopy category
\cite{bousfield;boolean,hovey-palmieri;bousfield}, we define the
distributive lattice $\DL$ to be the sub-poset of the Bousfield
lattice $\B$ consisting of classes $\bousfield{X}$ with $\bousfield{X}
= \bousfield{X \Smash X}$; this does not contain $\bousfield{\I{S}}$
if $S$ is infinite, but it does contain the classes
$\bousfield{\M{S}}$, because each $\alg{S}$ is a ring object.

The methods in \cite[Section 3]{hovey-palmieri;bousfield} apply here
to give an epimorphism of lattices $r: \B \rightarrow \DL$, right
adjoint to the inclusion $\DL \rightarrow \B$; furthermore, the map
$r$ factors through an epimorphism
\[
r': \B / J \rightarrow \DL,
\]
where $J$ is the ideal in $\B$ of Bousfield classes less than
$\bousfield{k}$.  The ``retract conjecture'' \cite[Conjecture
3.12]{hovey-palmieri;bousfield} says that $r'$ is an isomorphism.

\begin{corollary}\label{cor-retract}
The retract conjecture fails in $\derived{\Alg}$.
\end{corollary}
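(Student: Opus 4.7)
The plan is to contradict injectivity of $r'$. Since $r$ restricts to the identity on $\DL \subseteq \B$, the induced map $r'$ is automatically surjective, so the retract conjecture fails as soon as one can produce $\bousfield{X} \in \B$ with $r(\bousfield{X}) = \bousfield{0} = r(\bousfield{0})$ and $\bousfield{X} \notin J$; such an $X$ witnesses that $\bousfield{X}$ and $\bousfield{0}$ are distinct in $\B / J$ yet have the same image in $\DL$.

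Theorem~\ref{thm-tensor}(a) with $n = 1$ provides the candidate. Fix an infinite subset $S \subsetneq \N$ with infinite complement and set
\[
X = \I{S} \otimes \M{S\mycomp} = \module{S\mycomp,\emptyset,S}.
\]
Then $X \neq 0$ while $X \Smash X = 0$. Computing $r(\bousfield{X})$ is then straightforward: the smash product of Bousfield classes is monotone, so any $\bousfield{Y} \in \DL$ with $\bousfield{Y} \leq \bousfield{X}$ satisfies
\[
\bousfield{Y} = \bousfield{Y \Smash Y} \leq \bousfield{X \Smash X} = \bousfield{0},
\]
forcing $r(\bousfield{X}) = \bousfield{0}$.

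The real content is verifying $\bousfield{X} \not\leq \bousfield{k}$, which I plan to do by exhibiting a witness $W$ with $k \Smash W = 0$ and $X \Smash W \neq 0$. The natural choice is $W = \M{S} \otimes \I{S\mycomp} = \module{S,\emptyset,S\mycomp}$. Writing $k = \module{\emptyset,\N,\emptyset}$ and applying Lemma~\ref{lemma-smash-MIT} to each of $k \Smash W$ and $X \Smash W$, one finds that the relevant index set for $k \Smash W$ is $S\mycomp$ (infinite), whereas for $X \Smash W$ it is $(S \cap \emptyset) \cup (S \cap S\mycomp) \cup (\emptyset \cap S\mycomp) = \emptyset$ (finite). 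Hence $k \Smash W = 0$ but $X \Smash W \neq 0$, so $\bousfield{X} \not\leq \bousfield{k}$ and therefore $\bousfield{X} \notin J$, as required. The only mild pitfall is keeping the bookkeeping of the two partitions of $\N$ straight when invoking Lemma~\ref{lemma-smash-MIT}; once that is done the argument is immediate from the tensor-nilpotence of $X$ together with the explicit computation for $W$.
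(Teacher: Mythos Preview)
Your proof is correct but takes a genuinely different route from the paper's. The paper's argument is a one-liner that imports an implication from \cite[Proposition~3.13(e)]{hovey-palmieri;bousfield}: the retract conjecture forces $\bousfield{E \Smash E} = \bousfield{E \Smash E \Smash E}$ for every object $E$, and Theorem~\ref{thm-tensor} (part~(a) with $n\geq 2$, or part~(b)) supplies an $E$ violating this. You instead attack the injectivity of $r'$ directly: you use only the $n=1$ case of Theorem~\ref{thm-tensor}(a) to produce $X$ with $X^{(2)}=0$, compute $r(\bousfield{X})=\bousfield{0}$ from the description of $r$ as right adjoint to the inclusion $\DL\hookrightarrow\B$, and then check $\bousfield{X}\notin J$ by exhibiting the explicit witness $W=\module{S,\emptyset,S\mycomp}$ and invoking Lemma~\ref{lemma-smash-MIT}. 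Your approach is more self-contained---it never appeals to the cited proposition and engages directly with the definitions of $r$, $\DL$, and $J$---and it uses strictly less of Theorem~\ref{thm-tensor}; the paper's approach is shorter on the page but relies on the external black box. It is worth noting that your $n=1$ object would \emph{not} work for the paper's argument, since then $\bousfield{X^{(2)}}=\bousfield{X^{(3)}}=\bousfield{0}$ trivially; the two proofs really are extracting different information from the tensor-nilpotence phenomenon.
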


\begin{proof}
According to 
\cite[Proposition 3.13(e)]{hovey-palmieri;bousfield}, the
retract conjecture implies that every object $E$ in
$\derived{\Alg}$ satisfies
\[
\bousfield{E \Smash E} = \bousfield{E \Smash E \Smash E}.
\]
The theorem says that this is not true.
\end{proof}

\section[Minimality of $\I{\N}$]
{Minimality of {\protect$\bousfield{\I{\N}}$}}\label{sec-minimal}

In this section, we prove the following theorem.

\begin{theorem}\label{thm-minimal}
Fix objects $E$ and $X$ in $\derived{\Alg}$.  If $E \neq 0$ and
$X \Smash E=0$, then $\RHom_{\Alg}(X,\Alg)=0$ and
$\Hom_{\derived{\Alg}}(X,\Alg)=0$.
\end{theorem}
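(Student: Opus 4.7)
The plan is to reduce both conclusions to a single Bousfield-class statement, to translate the hypothesis through Brown-Comenetz duality, and then to close the argument using the self-injectivity of $\Alg$. Since $\Hom_{\derived{\Alg}}(X,\Alg)$ is the $(0,0)$-cohomology of $\RHom_{\Alg}(X,\Alg)$, vanishing of the latter forces vanishing of the former, so it suffices to prove $\RHom_{\Alg}(X,\Alg)=0$. Applying Corollary~\ref{cor-bcdual} with $S=\N$ (so that $\M{\N}=\Alg$) converts this further to the equivalent statement $X \Smash \I{\N}=0$. In other words, the theorem is the assertion that $\bousfield{\I{\N}}$ is the minimum nonzero Bousfield class: $\bousfield{\I{\N}} \leq \bousfield{E}$ for every nonzero $E$.

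Next I would translate the hypothesis using Brown-Comenetz duality. The tensor-hom adjunction underlying Lemma~\ref{lemma-bcdual} yields
\[
\Hom_{k}^{*}(H^{*}(X \Smash E),k) \;\cong\; \Hom_{\derived{\Alg}}^{*}(X,\I{E}),
\]
so $X \Smash E = 0$ if and only if $\RHom_{\Alg}^{*}(X,\I{E})=0$. Because $k$-linear duality is faithful, $\I{E}$ is a nonzero object of $\derived{\Alg}$ whenever $E$ is nonzero. The problem has therefore been reduced to: if $\RHom_{\Alg}^{*}(X,Y)=0$ for some nonzero $Y$, then $\RHom_{\Alg}^{*}(X,\Alg)=0$.

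The main obstacle is this last implication. Here my plan is to exploit the self-injectivity of $\Alg$. Since $\N$ is infinite, $\Alg$ is a $P$-algebra, and by the Margolis theorem invoked in Lemma~\ref{lemma-ext} the bounded-below free module $\Alg$ is injective as a graded $\Alg$-module. Consequently $\RHom_{\Alg}(-,\Alg)$ collapses to $\Hom_{\Alg}(-,\Alg)$ applied to the cohomology of $X$, and the question becomes whether any $H^{n}(X)$ admits a nonzero $\Alg$-linear map to $\Alg$. Contrapositively, one starts from a hypothetical nonzero $\phi \colon H^{n}(X)\to \Alg$ and pairs it against a nonzero class in $\I{E}$ (which exists because $\I{E}\neq 0$) using the Brown-Comenetz isomorphism above, with the aim of producing a nonzero cohomology class in $X \Smash E$ and contradicting the hypothesis. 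The delicate point is ensuring that this pairing cannot be identically zero; this should hinge on nilpotence of the maximal ideal of $\Alg$, which prevents a nonzero element of $\Alg$ from uniformly annihilating a nonzero $\Alg$-module. That structural fact is where I expect the sharpest technical work of the proof to live.
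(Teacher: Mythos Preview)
Your reductions in the first two paragraphs are sound: the theorem is indeed equivalent (via Corollary~\ref{cor-bcdual}) to the minimality of $\bousfield{\I{\N}}$, and Brown--Comenetz duality does translate $X \Smash E = 0$ into $\RHom_{\Alg}^{*}(X,\I{E})=0$. The contrapositive you arrive at --- a nonzero $\phi\colon H^{n}(X) \to \Alg$ should force $X \Smash E \neq 0$ for every nonzero $E$ --- is exactly the statement the paper proves.

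The final paragraph, however, is a genuine gap. The heuristic you offer (``nilpotence of the maximal ideal of $\Alg$ prevents a nonzero element of $\Alg$ from uniformly annihilating a nonzero $\Alg$-module'') is simply false: the element $x_{1}$ annihilates the nonzero module $k$. More seriously, your proposed ``pairing'' of $\phi$ with a class in $\I{E}$ has no clear meaning: there is no useful map $\Alg \to \I{E}$ to compose with, and the obvious move of tensoring $\phi$ with $1_{E}$ can perfectly well give zero. You have correctly flagged self-injectivity as relevant, but you are deploying it in the wrong place --- it does not by itself let you manufacture maps from $X$ into an arbitrary nonzero target.

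The paper's argument at this point is structurally different. From a nonzero $f\colon X \to \Alg$ one extracts $y \in H^{0}(X)$ with $f(y) \neq 0$ in $\Alg$. Since any nonzero element of $\Alg$ involves only finitely many of the variables $x_{i}$, there is a cofinite $T \subseteq \N$ such that $y$ generates a \emph{free} rank-one $\alg{T}$-submodule of $H^{0}(\restrict{X}{T})$. Self-injectivity of $\alg{T}$ (the $P$-algebra fact from Lemma~\ref{lemma-ext}) makes this free submodule a direct summand of $\restrict{X}{T}$. The substantive new ingredient is then Lemma~\ref{lemma-restrict}, resting on the bimodule Lemma~\ref{lemma-bimodule}: if $\alg{T}$ is a summand of $\restrict{X}{T}$ with $\N - T$ finite, one can build $\Alg$ itself from $X$ in $\derived{\Alg}$. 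That gives $\bousfield{X} = \bousfield{\Alg}$ outright, and the theorem follows. The restriction-to-cofinite-$T$ trick and the building-back lemma are precisely the ideas your outline is missing.
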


A stable homotopy theorist would translate this as, ``The object
$\Alg$ in $\derived{\Alg}$ is local with respect to every
nonzero homology theory.''  Thus the theorem says that every nonzero
object has a finite local, namely $\Alg$.  This verifies the
``Dichotomy conjecture'' from \cite[Conjecture
7.5]{hovey-palmieri;bousfield} in the category $\derived{\Alg}$.
(Since no object can have both a finite local and a finite acyclic,
this also implies that no nonzero object has a finite acyclic; this
also follows from Corollary~\ref{cor-thick}(b) below.)

Theorem~\ref{thm-minimal} has a number of useful consequences. 
Taking the contrapositive gives the following.

\begin{corollary}
If $\RHom_{\Alg}(X, \Alg) \neq 0$, then $X$ is
Bousfield equivalent to $\Alg$.
\end{corollary}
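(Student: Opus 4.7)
The plan is to deduce the corollary as a direct contrapositive of Theorem~\ref{thm-minimal}. First I would negate the conclusion of the theorem: if $\RHom_{\Alg}(X, \Alg) \neq 0$, then there can be no nonzero $E$ in $\derived{\Alg}$ with $X \Smash E = 0$, since the existence of such an $E$ would force $\RHom_{\Alg}(X,\Alg)$ to vanish. Unpacking the definition of Bousfield class, this says precisely that $\bousfield{X}$ contains only the zero object.

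Next I would compare this with the Bousfield class of $\Alg$ itself. Because $\Alg$ is the unit for the derived tensor product, we have $\Alg \Smash W \cong W$ for every $W$, so $\bousfield{\Alg}$ also contains only the zero object; equivalently, $\bousfield{\Alg}$ is the largest element of $\B$ under the reverse-inclusion ordering, as already noted in the introduction. Setting the two computations of Bousfield classes side by side yields $\bousfield{X} = \bousfield{\Alg}$, which is by definition the Bousfield equivalence claimed.

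The main obstacle here is essentially nonexistent: all of the substantive work is already packaged inside Theorem~\ref{thm-minimal}, and the corollary merely reads that result backwards. The only point worth flagging is that we use only the first conclusion $\RHom_{\Alg}(X,\Alg) = 0$ of Theorem~\ref{thm-minimal}; the auxiliary conclusion $\Hom_{\derived{\Alg}}(X, \Alg) = 0$ is not needed for this particular consequence and could be recovered separately as the degree-zero piece of the $\RHom$.
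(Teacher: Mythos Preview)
Your proposal is correct and matches the paper's approach exactly: the paper simply says ``Taking the contrapositive gives the following'' before stating the corollary, and you have spelled out precisely what that contrapositive argument looks like. There is nothing to add.
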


\begin{corollary}\label{cor-minimal}
$\I{\N}$ is the minimum nonzero Bousfield class: that is, for every
nonzero object $E$, we have $\bousfield{E} \geq \bousfield{\I{\N}}$.
\end{corollary}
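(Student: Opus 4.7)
The plan is to derive Corollary~\ref{cor-minimal} directly from Theorem~\ref{thm-minimal} together with Corollary~\ref{cor-bcdual}, with essentially no additional work beyond unwinding definitions and conventions.

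First I recall the convention: the Bousfield lattice is ordered by reverse inclusion, so the statement $\bousfield{E} \geq \bousfield{\I{\N}}$ unpacks as the set-theoretic containment $\bousfield{E} \subseteq \bousfield{\I{\N}}$. Concretely, I must show that for every nonzero $E$ and every $W$ in $\derived{\Alg}$, if $E \Smash W = 0$ then $\I{\N} \Smash W = 0$.

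So fix a nonzero $E$ and an object $W$ with $E \Smash W = 0$. Apply Theorem~\ref{thm-minimal} with $X := W$: the hypotheses are exactly what we have, so we conclude $\RHom_{\Alg}(W, \Alg) = 0$. Next, observe that $\Alg = \M{\N}$ and note that $\Alg$ is locally finite (each graded piece is finite-dimensional over $k$), so Corollary~\ref{cor-bcdual} applies with $S = \N$ to give the equivalence $\RHom_{\Alg}^{*}(W, \M{\N}) = 0 \iff W \Smash \I{\N} = 0$. Combining, $\I{\N} \Smash W = 0$, which is what was needed.

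The entire content of the corollary is therefore a direct consequence of Theorem~\ref{thm-minimal} and the Brown-Comenetz duality computation in Corollary~\ref{cor-bcdual}; the genuine work is in Theorem~\ref{thm-minimal} itself, which we are entitled to assume. The only things to be careful about are (i) remembering the reverse-inclusion convention for the order on $\B$, so one does not prove the wrong direction of the inequality, and (ii) identifying $\Alg$ with $\M{\N}$ so that Corollary~\ref{cor-bcdual} is applicable with $S = \N$. No additional homological input is required.
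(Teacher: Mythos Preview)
Your proof is correct and follows the same route as the paper: take $W$ with $E \Smash W = 0$, apply Theorem~\ref{thm-minimal} to get $\RHom_{\Alg}(W,\Alg)=0$, then use Corollary~\ref{cor-bcdual} with $S=\N$ (since $\Alg=\M{\N}$) to conclude $W \Smash \I{\N}=0$. The only difference is that you spell out the reverse-inclusion convention and the identification $\Alg=\M{\N}$ explicitly, which the paper leaves implicit.
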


\begin{proof}
Fix a nonzero object $E$ in $\derived{\Alg}$ and suppose that $Y
\Smash E=0$.  Then $\RHom_{\Alg} (Y, \Alg)=0$, so by
Corollary~\ref{cor-bcdual}, $Y \Smash \I{\N}=0$.  That is, whenever $Y
\Smash E=0$, we have $Y \Smash \I{\N}=0$.  Thus $\bousfield{E} \geq
\bousfield{\I{\N}}$.
\end{proof}

As in the usual stable homotopy category, we define the Boolean
algebra $\BA$ to be the set of \emph{complemented} Bousfield classes:
those classes $\bousfield{X}$ for which there is a $Y$ with
\[
\bousfield{X \Smash Y} = 0, \quad \quad 
\bousfield{X \oplus Y} = \Alg. 
\]
In any stable homotopy category, we have the inclusion $\BA \subseteq
\DL$; see \cite{bousfield;boolean} and \cite{hovey-palmieri;bousfield}
for more about $\BA$, $\DL$, and $\B$.

\begin{corollary}\label{cor-bool}
In the Bousfield lattice for the category $\derived{\Alg}$,
$\BA$ is trivial: the only complemented classes are
$\bousfield{0}$ and $\bousfield{\Alg}$.
\end{corollary}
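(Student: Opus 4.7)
The plan is to assume $\bousfield{X}$ is complemented, say by $\bousfield{Y}$ (so $X \Smash Y = 0$ and $\bousfield{X \oplus Y} = \bousfield{\Alg}$), and then deduce from the minimality of $\bousfield{\I{\N}}$ (Corollary~\ref{cor-minimal}) that one of $X$ or $Y$ must vanish.

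Assume for contradiction that both $X$ and $Y$ are nonzero. The core step is to apply Corollary~\ref{cor-minimal} to each. Since $X \neq 0$, the corollary gives $\bousfield{X} \geq \bousfield{\I{\N}}$; unwinding the reverse-inclusion convention on $\B$, this says that any $W$ with $X \Smash W = 0$ also satisfies $\I{\N} \Smash W = 0$. Taking $W = Y$ yields $\I{\N} \Smash Y = 0$. The symmetric argument applied to $Y \neq 0$ gives $\I{\N} \Smash X = 0$. Consequently $(X \oplus Y) \Smash \I{\N} = 0$.

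To finish, combine this with $\bousfield{X \oplus Y} = \bousfield{\Alg}$: the latter forces any $(X \oplus Y)$-acyclic to be $\Alg$-acyclic, hence zero. So $\I{\N} = 0$, which contradicts the fact that $\I{\N}$ is the nonzero $k$-vector-space dual of $\Alg$. Therefore one of $X$ or $Y$ must be zero, and then $\bousfield{X}$ is either $\bousfield{0}$ or $\bousfield{\Alg}$.

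The only point that requires care is book-keeping the reverse-inclusion convention on the Bousfield lattice, so that Corollary~\ref{cor-minimal} gets interpreted correctly as an acyclicity statement ready to be combined with $X \Smash Y = 0$. Once that translation is in hand, there is no real obstacle: the argument is essentially a single application of minimality, symmetrically to $X$ and $Y$, together with the fact that the top element $\bousfield{\Alg}$ admits no nonzero acyclics.
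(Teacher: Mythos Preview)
Your proof is correct and follows essentially the same approach as the paper's: both use Corollary~\ref{cor-minimal} to conclude that if $X$ and $Y$ are nonzero with $X \Smash Y = 0$, then $(X \oplus Y) \Smash \I{\N} = 0$, contradicting $\bousfield{X \oplus Y} = \bousfield{\Alg}$. Your write-up is in fact slightly more explicit than the paper's in unwinding how $\bousfield{X} \geq \bousfield{\I{\N}}$ and $X \Smash Y = 0$ combine to give $\I{\N} \Smash Y = 0$.
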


\begin{proof}
Fix $\bousfield{X}$ not equal to $\bousfield{0}$ or
$\bousfield{\Alg}$.  Suppose that $Y \neq 0$ and $X \Smash Y=0$.
Then by Corollary~\ref{cor-minimal}, $\bousfield{X} \geq
\bousfield{\I{\N}}$ and $\bousfield{Y} \geq \bousfield{\I{\N}}$.  Thus
$(X \oplus Y) \Smash \I{\N} = 0$, so there is no object $Y$ so that $X
\Smash Y=0$ and $\bousfield{X \oplus Y} = \bousfield{\Alg}$.
\end{proof}

The \emph{telescope conjecture} in a general stable homotopy category
says that every smashing localization is a finite localization -- see
\cite[3.3.8]{axiomatic}.

\begin{corollary}
The telescope conjecture holds in $\derived{\Alg}$.
\end{corollary}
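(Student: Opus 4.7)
The plan is to deduce this corollary directly from Corollary~\ref{cor-bool}, using the standard correspondence between smashing localizations and complemented Bousfield classes.

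First I would recall the correspondence: in any stable homotopy category, a smashing localization $L$ gives rise to a cofibre sequence $C\Alg \to \Alg \to L\Alg$ of Bousfield idempotents, with $\bousfield{L\Alg}$ and $\bousfield{C\Alg}$ complementary in $\B$ (that is, $\bousfield{L\Alg \Smash C\Alg}=\bousfield{0}$ and $\bousfield{L\Alg \oplus C\Alg} = \bousfield{\Alg}$); conversely, any complemented class arises this way. This is standard; see \cite[Section 3.3]{axiomatic}. Thus smashing localizations in $\derived{\Alg}$ are in bijection with elements of $\BA$.

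Next, by Corollary~\ref{cor-bool}, the Boolean algebra $\BA$ for $\derived{\Alg}$ consists solely of $\bousfield{0}$ and $\bousfield{\Alg}$. Therefore there are only two smashing localizations: the zero functor (corresponding to $\bousfield{L\Alg} = \bousfield{0}$) and the identity functor (corresponding to $\bousfield{L\Alg} = \bousfield{\Alg}$). Both of these are visibly finite localizations — the zero localization is localization away from the thick subcategory $\thick(\Alg) = \derived{\Alg}^{c}$ of compact objects, while the identity is finite localization away from the zero subcategory. Hence every smashing localization is a finite localization, which is the content of the telescope conjecture.

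The main observation driving the proof is therefore Corollary~\ref{cor-bool}; no additional work beyond invoking the complementation/smashing correspondence is required. There is no real obstacle here, since the triviality of $\BA$ forces the telescope conjecture to hold in a degenerate manner: there are simply no nontrivial smashing localizations in $\derived{\Alg}$ to worry about.
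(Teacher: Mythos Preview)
Your proposal is correct and follows essentially the same route as the paper: both use the standard fact that smashing Bousfield classes are complemented, invoke Corollary~\ref{cor-bool} to conclude that $\BA$ is trivial, and observe that the resulting localizations (identity and zero) are trivially finite. The paper cites \cite{ravenel;localization} and \cite{hovey-palmieri;bousfield} for the complementation fact and treats only the nonzero case explicitly, but the substance is identical.
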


\begin{proof}
Suppose that $\bousfield{E}$ is a nonzero smashing Bousfield class.
From \cite[1.31]{ravenel;localization} or
\cite[4.5]{hovey-palmieri;bousfield}, we have that $\bousfield{E}$
must be complemented, and so by Corollary~\ref{cor-bool}, we have
$\bousfield{E}=\bousfield{\Alg}$.  Localization with respect to
$\Alg$ is the identity functor, which is trivially a finite
localization.
\end{proof}

We need a few lemmas before we prove the theorem.

Fix a ring $R$ and a subring $S$; then for any object $Y$ in
$\derived{R}$, write $\restrict{Y}{S}$ for the restriction of $Y$ to
$S$.

\begin{lemma}\label{lemma-bimodule}
Let $S$ be a graded commutative ring, and let $R=S[x]/(x^{n})$.
Assume that $R$ is graded commutative; that is, if $n>2$, assume that
$\deg x$ is even.  For any object $X$ in $\derived{R}$, one can build
$R \ltensor{S} \restrict{X}{S}$ from $X$.
\end{lemma}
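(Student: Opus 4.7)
The plan is to produce a natural exact triangle
\[
K \to R \ltensor{S} \restrict{X}{S} \to X
\]
in $\derived{R}$ and then show that $K$ itself already lies in $\loc(X)$, the localizing subcategory of $\derived{R}$ generated by $X$.

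To build the triangle, I would start from the short exact sequence of $R$-bimodules
\[
0 \to I \to R \otimes_{S} R \xrightarrow{\mu} R \to 0,
\]
where $\mu(a \otimes b) = ab$ is multiplication (graded commutativity of $R$ is what makes this a well-defined bimodule map) and $I := \ker(\mu)$. The unit section $r \mapsto 1 \otimes r$ is right $R$-linear and splits $\mu$, so $I$ is a direct summand of $R \otimes_S R$ as a right $R$-module, and in particular is right $R$-flat. Applying $-\otimes_R X$ (using the right $R$-structure on the bimodules to tensor with the left $R$-module $X$) therefore preserves exactness. Under the canonical isomorphism $(R \otimes_S R) \otimes_R X \cong R \otimes_S \restrict{X}{S}$ sending $a \otimes b \otimes v \mapsto a \otimes bv$, this produces a short exact sequence
\[
0 \to I \otimes_R X \to R \otimes_S \restrict{X}{S} \to X \to 0
\]
of left $R$-modules, giving the desired triangle in $\derived{R}$.

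It then remains to prove that $K := I \otimes_R X \cong I \ltensor{R} X$ lies in $\loc(X)$. For this I plan to establish the more general statement that $Y \ltensor{R} X \in \loc(X)$ for \emph{every} $Y \in \derived{R}$. Indeed, the collection of such $Y$ is closed under exact triangles and arbitrary direct sums because $-\ltensor{R} X$ is exact and preserves coproducts, and it contains $R$ since $R \ltensor{R} X \cong X$; so it is a localizing subcategory of $\derived{R}$ containing the generator $R$, and therefore equals all of $\derived{R}$. Specializing to $Y = I$ yields $K \in \loc(X)$, and combined with the triangle above this shows that $R \ltensor{S} \restrict{X}{S}$ can be built from $X$.

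The main point requiring care will be the bookkeeping around the two commuting $R$-actions on $R \otimes_S R$ and the verification that the iterated tensor product $(R \otimes_S R) \otimes_R X$ is canonically the left $R$-module $R \otimes_S \restrict{X}{S}$; everything else is essentially formal once the triangle and the generator argument are in place.
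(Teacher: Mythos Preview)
Your argument has a genuine gap, and it is precisely the subtlety the paper itself flags. The problem is the step where you conclude $K = I \otimes_R X \in \loc(X)$ from the general fact that $Y \ltensor{R} X \in \loc(X)$ for every $Y \in \derived{R}$. The bimodule $I$ carries two \emph{different} $R$-actions (write $R \otimes_S R \cong S[x,y]/(x^n,y^n)$, with left action by $x$ and right action by $y$). In your triangle the tensor $I \otimes_R X$ uses the \emph{right} action on $I$, while the left $R$-module structure on $K$ comes from the \emph{left} action on $I$. By contrast, when you form the symmetric monoidal product $Y \ltensor{R} X$ for a single $R$-module $Y$, the action used to tensor and the action on the result are the \emph{same} one (transported via commutativity). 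Thus $K$ is not of the form $Y \ltensor{R} X$ in the sense your generator argument requires; what you have actually shown is only that $K$ lies in $\loc(X)$ after restricting to $S$, not in $\derived{R}$.

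The paper makes exactly this point: building $R \otimes_S R$ from $R$ as a one-sided $R$-module and then tensoring with $X$ only controls $R \ltensor{S} \restrict{X}{S}$ as a left $S$-module. The fix is to work in the derived category of $(R,R)$-bimodules, i.e.\ $R \otimes_S R$-modules, and show there that $R \otimes_S R$ can be built from the \emph{diagonal} bimodule $R$. This takes real work, because the diagonal $R$ does not generate the bimodule category; the paper does it in two steps (first build the trivial bimodule $S$ from $R$ by truncating a free $S[x]/(x^n)$-resolution of $S$, then build $R \otimes_S R$ from $S$ since it is free of finite rank over $S$). Only after that does applying the exact, coproduct-preserving functor $-\ltensor{R} X$ from bimodules to left $R$-modules yield the desired conclusion in $\derived{R}$.
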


\begin{proof}
The proof goes like this: first we claim that one can build $R
\tensor{S} R = R \tensor{S} \restrict{R}{S}$ from $R$.  Given this, we
apply $- \ltensor{R} X$ to deduce that one can build $R \ltensor{S}
\restrict{X}{S}$ from $X$.  There is an important technical point,
though: while it is clear that one can build $R \tensor{S} R$ from $R$
as, say, a right $R$-module (since one can build anything in the
derived category of right $R$-modules from $R$), then after tensoring
with $X$, we would only know that we could build $R \ltensor{S} X$
from $X$ as a left $S$-module, not as a left $R$-module.

So the lemma is equivalent to this claim: one can build $R \tensor{S}
R$ from $R$ \emph{as an $(R,R)$-bimodule}.  Of course,
$(R,R)$-bimodules are the same as modules over $R \tensor{S}
R^{\text{op}} \cong R \tensor{S} R \cong S[x,y]/(x^{n},y^{n})$.  So we
make $S[x]/(x^{n})$ into a module over $S[x,y]/(x^{n},y^{n})$ by
having both $x$ and $y$ act as multiplication by $x$; then we want to
show that we can build $S[x,y]/(x^{n}, y^{n})$ from $S[x]/(x^{n})$.
We do this in two steps.

First, we build $S$ from $S[x]/(x^{n})$ in
$\derived{S[x,y]/(x^{n},y^{n})}$.  We do this by taking a free
resolution of $S$ as an $S[x]/(x^{n})$-module, treating the result as
a chain complex of (non-free) modules over $S[x,y]/(x^{n},y^{n})$, and
then taking successive truncations as in the proof of
Lemma~\ref{lemma-build-M}.  That is, $S$ is the colimit of the chain
complexes 
\[
0 \rightarrow P_{m} \rightarrow \dotsb \rightarrow P_{0} \rightarrow 0,
\]
where each $P_{i}$ is a shifted copy of $S[x]/(x^{n})$.  Each such
chain complex can be built from $S[x]/(x^{n})$, and hence so can $S$.

Second, using the fact that $S[x,y]/(x^{n},y^{n})$ is free of finite
rank as an $S$-module, we can build it from $S$.  Since we can build
$S$ from $S[x]/(x^{n})$, this completes the proof.
\end{proof}

\begin{lemma}\label{lemma-restrict}
Let $T$ be a subset of $\N$ with $\N -T$ finite.  
Fix an object $Y$ in $\derived{\Alg}$.  If the free module $\alg{T}$
is a summand of $\restrict{Y}{\alg{T}}$, then $\Alg$ may be built from
$Y$, so $\bousfield{Y} = \bousfield{\Alg}$.
\end{lemma}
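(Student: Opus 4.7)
The plan is to apply Lemma~\ref{lemma-bimodule} inductively along a finite tower of subalgebras connecting $\alg{T}$ to $\Alg$. Since $T\mycomp = \N - T$ is finite, enumerate it as $\{i_{1}, \dotsc, i_{k}\}$ and set $S_{0} = \alg{T}$ and $S_{j} = \alg{T \cup \{i_{1}, \dotsc, i_{j}\}}$, so that $S_{k} = \Alg$ and each extension presents $S_{j}$ as $S_{j-1}[x_{i_{j}}]/(x_{i_{j}}^{n_{i_{j}}})$, with $\deg x_{i_{j}} = 2^{i_{j}}$ even so the graded-commutativity hypothesis of Lemma~\ref{lemma-bimodule} is met.

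The key claim, proved by downward induction on $j$, is that $\Alg \ltensor{S_{j}} \restrict{Y}{S_{j}}$ can be built from $Y$ in $\derived{\Alg}$ for every $0 \leq j \leq k$. The base case $j = k$ is trivial. For the inductive step, Lemma~\ref{lemma-bimodule} applied in $\derived{S_{j+1}}$ to $\restrict{Y}{S_{j+1}}$ (with $R = S_{j+1}$ and $S = S_{j}$) shows that $S_{j+1} \ltensor{S_{j}} \restrict{Y}{S_{j}}$ can be built from $\restrict{Y}{S_{j+1}}$ in $\derived{S_{j+1}}$. Then apply the extension-of-scalars functor $\Alg \ltensor{S_{j+1}} -$, which is exact and commutes with arbitrary direct sums because $\Alg$ is finite free over $S_{j+1}$, to transport this into the statement that $\Alg \ltensor{S_{j}} \restrict{Y}{S_{j}}$ can be built from $\Alg \ltensor{S_{j+1}} \restrict{Y}{S_{j+1}}$ in $\derived{\Alg}$; composing with the inductive hypothesis finishes the step. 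At $j = 0$ we obtain that $\Alg \ltensor{\alg{T}} \restrict{Y}{\alg{T}}$ can be built from $Y$.

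Since $\alg{T}$ is a summand of $\restrict{Y}{\alg{T}}$ by hypothesis, $\Alg \cong \Alg \ltensor{\alg{T}} \alg{T}$ is a summand of $\Alg \ltensor{\alg{T}} \restrict{Y}{\alg{T}}$; localizing subcategories are thick and hence closed under retracts, so $\Alg$ lies in the localizing subcategory generated by $Y$. Since building implies $\bousfield{\Alg} \leq \bousfield{Y}$ and $\bousfield{\Alg}$ is the top element of $\B$, this forces $\bousfield{Y} = \bousfield{\Alg}$. The main subtlety is bookkeeping: each invocation of Lemma~\ref{lemma-bimodule} lives in a different derived category $\derived{S_{j+1}}$, so its conclusion must be inflated along an exact, coproduct-preserving extension of scalars before being spliced together with earlier instances inside $\derived{\Alg}$; once this is handled, the retract hypothesis closes the argument immediately.
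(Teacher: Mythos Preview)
Your proof is correct and follows essentially the same approach as the paper: both set up the finite tower $\alg{T}=S_{0}\subset S_{1}\subset\dotsb\subset S_{k}=\Alg$ and invoke Lemma~\ref{lemma-bimodule} at each stage, using extension of scalars to splice the steps together. The only difference is organizational---you run a downward induction inside $\derived{\Alg}$ on the object $\Alg\ltensor{S_{j}}\restrict{Y}{S_{j}}$, whereas the paper runs a forward induction through the categories $\derived{S_{j}}$ on the statement that $S_{j}$ can be built from $\restrict{Y}{S_{j}}$.
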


\begin{proof}
First assume that $\Alg$ may be built from $Y$; then $\bousfield{Y} \geq
\bousfield{\Alg}$; on the other hand, $\bousfield{Y} \leq
\bousfield{\Alg}$ for any $Y$, so we see that
$\bousfield{Y}=\bousfield{\Alg}$.

So we need to verify that $\Alg$ may be built from $Y$.  Suppose
that $\N - T = \{x_{i_{1}}, \dotsc, x_{i_{n}}\}$.  For each $j$, $1
\leq j \leq n$, let
\[
\Alg_{j} = \alg{T \cup \{x_{i_{1}},\dotsc, x_{i_{j}} \} },
\]
and let $\Alg_{0} = \alg{T}$.  Note that $\Alg_{n} = \Alg$.  Then for
each $j \leq n$, there is an algebra isomorphism
\[
\Alg_{j} \cong \Alg_{j-1}[x_{j}]/(x_{j}^{n_{j}}),
\]
so we may apply the previous lemma to conclude that we may build
$\Alg_{j} \ltensor{\Alg_{j-1}} \restrict{Y}{\Alg_{j-1}}$ from
$\restrict{Y}{\Alg_{j}}$ for each $j$.  Since $\Alg_{0}$ is a summand
of $\restrict{Y}{\Alg_{0}}$, we may build $\Alg_{1} \ltensor{\Alg_{0}}
\Alg_{0} \cong \Alg_{1}$ from $\restrict{Y}{\Alg_{1}}$.  Inductively,
we see that we may build $\Alg_{j}$ from $\restrict{Y}{\Alg_{j}}$ for
each $j$, and in particular when $j=n$, we may build $\Alg_{n}=\Alg$
from $Y$.
\end{proof}

\begin{proof}[Proof of Theorem \ref{thm-minimal}]
Suppose that $X$ is an object of $\derived{\Alg}$, and suppose that
$\Hom_{\derived{\Alg}}(X, \Alg) \neq 0$.  We want to show
that if $X \Smash E = 0$, then $E=0$; that is, we want to show that
$X$ is Bousfield equivalent to the sphere object $\Alg$.

So suppose that $f: X \rightarrow \Alg$ is a nonzero map.  Then one
can check that it induces a nonzero map on homology: $H(f) : H^{0}(X)
\rightarrow \Alg$ is nonzero.  This is a map of $\Alg$-modules.  If
$H(f)(y) \neq 0$, then the element $y \in H^{0}(X)$ supports
free actions by all but finitely many of the $x_{i}$'s, because
this is true for its target in $\Alg$.  Let $T$ be the set
\[
T = \{i: x_{i}^{n_{i}-1} y \neq 0 \}.
\]
Then $T$ is cofinite in $\N$, and in the homology of the restriction
of $X$ to $\alg{T}$, $y$ will generate a free module of rank 1.
Therefore this free module will split off of $\restrict{X}{\alg{T}}$.
By Lemma~\ref{lemma-restrict}, this means that
$\bousfield{X}=\bousfield{\Alg}$, as desired.
\end{proof}

\section{Nilpotence and small objects}\label{sec-nilp}

In this section, we discuss versions of some stable homotopy results
in the category $\derived{\Alg}$.  In particular, we show that the
object $k$ detects nilpotence, in the language of \cite{hopkins-smith,
hopkins;global}.  We use this to classify the thick subcategories of
small objects, and also to prove a version of the periodicity theorem
\cite{hopkins-smith,hopkins;global}.

Note that for any object $X$ in the derived category
$\derived{\Alg}$, its homology is
$\Hom_{\derived{\Alg}}^{*}(\Alg, X)$.  This is not the homology
theory determined by $k$; indeed, if $M$ is a $\Alg$-module, then
\[
k_{*}M := \Hom_{\derived{\Alg}}^{*}(\Alg, k \Smash M) \cong
H_{*}(k \Smash M) \cong \Tor^{\Alg}_{*}(k,M).
\]

In order to state the nilpotence theorem, we need to recall a few
definitions.

\begin{definition}
\begin{enumerate}
\item 
First, $E$ is a \emph{ring object} in $\derived{\Alg}$ if there is
a ``multiplication'' map $\mu : E \Smash E \rightarrow E$ and a
``unit'' map $\eta : \Alg \rightarrow E$ in $\derived{\Alg}$ so
that $\mu \circ (\eta \otimes 1) = 1 = \mu \circ (1 \otimes \eta)$.
If $E$ is a ring object, then $\Hom_{\derived{\Alg}}^{*}(\Alg ,
E)$ is a (graded) ring: given two maps $f, g: \Alg \rightarrow E$,
their product is the composite
\[
\Alg \xrightarrow{\cong} \Alg \Smash \Alg \xrightarrow{f
\otimes g} E \Smash E \xrightarrow{\mu} E.
\]
Similarly, since $k$ is a ring object, so is $k \Smash E$, and so
$k_{*}E = \Hom_{\derived{\Alg}}^{*}(\Alg, k \Smash E)$ is a
graded ring.
\item 
An object $F$ in $\derived{\Alg}$ is \emph{small} if the natural
map 
\[
\bigoplus_{\alpha} \Hom_{\derived{\Alg}}(F, X_{\alpha}) \rightarrow
\Hom_{\derived{\Alg}} (F, \bigoplus_{\alpha} X_{\alpha})
\]
is an isomorphism for any set of objects $\{X_{\alpha}\}$.  
\item 
A map $g: F \rightarrow F$ is \emph{nilpotent} if some
composite $g \circ \dotsb \circ g$ is zero.
A map $f: F \rightarrow X$ is \emph{tensor-nilpotent} if for
some $n\geq 1$, the map 
\[
f^{\otimes n} : \underbrace{F \Smash \dotsb \Smash F}_{n} \rightarrow
\underbrace{X \Smash \dotsb \Smash X}_{n}
\]
is zero.
\end{enumerate}
\end{definition}

\begin{theorem}[Nilpotence theorem]
\label{thm-nilp}
The field object $k=\alg{\emptyset}$ detects nilpotence:
\begin{enumerate}
\item For any ring object $E$ and $\alpha: \Alg \rightarrow E$,
$\alpha$ is nilpotent if and only if $k_{*}\alpha$ is nilpotent.
\item For any small object $F$ and self-map $g:F \rightarrow F$, $g$
is nilpotent if and only if $k_{*}g$ is nilpotent.
\item For any small object $F$, any object $X$, and any map $f: F
\rightarrow X$, the map $f$ is tensor-nilpotent if and only if
$k_{*}f$ is zero.
\end{enumerate}
\end{theorem}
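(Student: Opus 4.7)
The plan is to prove the three parts by first establishing the easy implications in each, then showing that parts (a) and (b) follow formally from the key assertion in part (c), namely that $k_*f=0$ forces $f\colon F\to X$ to be tensor-nilpotent whenever $F$ is small. All of the real work will be concentrated in that single implication.

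The easy directions in (a), (b), (c) are immediate from functoriality and a Künneth identity: since $k \Smash (Y \Smash Z) \cong (k \Smash Y) \ltensor{k} (k \Smash Z)$ and $k$ is a field, the tensor product of graded $k$-vector space maps is zero only if one of the factors is, so $k_*(f^{\otimes n}) \cong (k_*f)^{\otimes n}$ vanishes precisely when $k_*f$ does; analogous remarks cover the self-map and ring-object cases. For the deductions (c)$\Rightarrow$(a) and (c)$\Rightarrow$(b): given a ring object $E$ and $\alpha\colon\Alg\to E$ with $k_*\alpha$ nilpotent, choose $n$ with $(k_*\alpha)^n=k_*(\alpha^n)=0$; since $\Alg$ is small, (c) gives $(\alpha^n)^{\otimes m}=0$, and composing with the iterated multiplication $\mu_{nm}\colon E^{\otimes nm}\to E$ yields $\alpha^{nm}=0$. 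For a self-map $g\colon F\to F$ with $F$ small, replace $g$ by its mate $\tilde g\colon\Alg\to F^{\vee}\Smash F$ in the ring object $F^{\vee}\Smash F$ (using that small objects are dualizable) and apply (a).

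The main obstacle will be establishing the hard half of (c): if $F$ is small and $k_*f=0$, then $f$ is tensor-nilpotent. My plan is as follows. Since $F$ is small it is a perfect complex of $\Alg$-modules, so $k\Smash F$ is a bounded complex of $\Alg$-modules whose homology is finite-dimensional and on which $\Alg$ acts through the augmentation to $k$. The hypothesis says that $k\Smash f$ is zero on homology. The aim is to deduce that a sufficiently high smash power of $f$ literally vanishes. The approach is to iterate the cofiber construction: writing $C$ for the cofiber of $f$, the condition $k_*f=0$ translates into the statement that $k_*C \to \Sigma k_*F$ is split surjective, and so the composite $F\Smash C\to F\Smash\Sigma F$ carries the smash power $f\Smash 1_C$ into a distinguished piece that bounds $f$ by a map factoring through $\Sigma^{-1}C$. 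Iterating this splitting in the $n$-fold smash power $F^{\otimes n}\to X^{\otimes n}$, each factor of $f$ gets pushed into an increasing filtration whose associated graded depth is controlled by the cell structure of $F$; after finitely many steps (bounded by the length of a resolution realizing $F$ as perfect) the composite is forced to zero.

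The place where I expect to have to work hardest is in making the inductive step rigorous: it requires translating the vanishing of $H^*(k\Smash f)$ into an honest nullhomotopy of a tensor power, and the issue is that $k_*f=0$ controls only homology and leaves open the higher $\Ext_{\Alg}^{>0}(k,k\Smash X)$ obstructions. Dealing with this cleanly will either require a Bousfield-theoretic reformulation --- using Corollary~\ref{cor-bcdual} and the fact that $\bousfield{\I{\N}}$ is the minimum nonzero class (Corollary~\ref{cor-minimal}) to pass from vanishing of a cohomology group to vanishing of a smash product --- or a direct cell-by-cell argument on the perfect complex $F$, reducing to the case $F=\Alg$ where tensor-nilpotence of $f\colon\Alg\to X$ amounts to showing that the element $f\in H^{0}(X)$ has all sufficiently large self-powers annihilated in $H^{*}(X^{\otimes n})$; this in turn should follow from the compatibility of $k_*$ with $\Tor$ and the structure theorem for perfect complexes over $\Alg$.
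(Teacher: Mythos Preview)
Your formal reductions are fine: the easy directions and the deductions (c)$\Rightarrow$(a), (c)$\Rightarrow$(b) are standard and correctly stated. The gap is exactly where you locate it, in the hard implication of (c), and your proposed attack does not close it. The cofiber-iteration/cell-by-cell strategy you sketch is vague, and your honest acknowledgment of the ``higher $\Ext$ obstructions'' is precisely the point: knowing $k_*f=0$ controls only the bottom of an Adams-type filtration, and nothing in your outline explains why the obstructions to tensor-nilpotence at higher filtration should vanish. The appeal to Corollary~\ref{cor-minimal} does not help either---minimality of $\bousfield{\I{\N}}$ is a statement about $\Smash$-acyclics of nonzero objects, not about tensor-nilpotence of a map between nonzero objects. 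And there is no ``structure theorem for perfect complexes over $\Alg$'' to invoke; $\Alg$ is non-noetherian and no such classification is available.

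The paper's argument is entirely different and exploits the specific structure of $\Alg$. After reducing (as in Hopkins--Smith) to showing that the telescope
\[
T=\varinjlim\bigl(\Alg\xrightarrow{f} X\xrightarrow{f\otimes 1}X\Smash X\to\cdots\bigr)
\]
vanishes, one uses two facts special to this ring: first, $k=\varinjlim_n \M{\{n,n+1,\dots\}}$ as a filtered colimit of quotient rings of $\Alg$; second, by Lemma~\ref{lemma-build-M}, each $\M{\{n,n+1,\dots\}}$ is Bousfield equivalent to $\Alg$ itself. From $k_*f=0$ one gets that the unit map $\Alg\to k\Smash T=\varinjlim_n\bigl(\M{\{n,\dots\}}\Smash T\bigr)$ is null; compactness of $\Alg$ then forces $\Alg\to \M{\{n,\dots\}}\Smash T$ to be null for some finite $n$, whence $\M{\{n,\dots\}}\Smash T=0$. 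Since $\bousfield{\M{\{n,\dots\}}}=\bousfield{\Alg}$, this gives $T=0$. The key idea you are missing is this filtration of $k$ by ring objects each Bousfield equivalent to the unit; that is what converts vanishing of $k$-homology into vanishing of the telescope.
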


\begin{proof}
Since $\Alg$ is the unit of the derived tensor product in
$\derived{\Alg}$, it plays the role of the sphere object.
Therefore, as in the proof of \cite[Theorem 3]{hopkins-smith}, the
proof boils down to this situation: $f: \Alg \rightarrow X$ is a
map, and $T$ is the following telescope:
\[
T = \varinjlim \left(\Alg \xrightarrow{f} X \xrightarrow{f \otimes
1_{X}} X \Smash X \xrightarrow{f \otimes 1_{X} \otimes 1_{X}} X \Smash X \Smash X
\xrightarrow{} \dotsb\right).
\]
We assume that $k_{*}f=0$, and we want to show that the $m$th
derived tensor power $f^{(m)}$ of $f$ is zero for $m \gg 0$, or
equivalently, that $T=0$.  From Lemma~\ref{lemma-build-M}, we see that
for any $n \geq 1$,
\[
\bousfield{\Alg} = \bousfield{\M{n, n+1, \dotsc}}.
\]
Since $k = \varinjlim_{n} \M{n, n+1, \dotsc}$, the map 
\[
\Alg \rightarrow \varinjlim \M{n, n+1, \dotsc} \Smash T
\]
is null.  Thus since $\Alg$ is small in the category
$\derived{\Alg}$, we see that for some $n \geq 1$,
\[
\Alg \rightarrow \M{n, n+1, \dotsc} \Smash T
\]
is null, which means that 
\[
\M{n, n+1, \dotsc} \Smash T = 0,
\]
and hence $T = \Alg \Smash T = 0$, as desired.
\end{proof}

It is a standard result that the subcategory of small objects in the
derived category of a commutative ring $R$ is precisely the thick
subcategory (Definition~\ref{defn-thick}) generated by $R$.  Also,
whenever one has field objects which detect nilpotence, they determine
the thick subcategories of the category of small objects -- see
\cite[Corollary 5.2.3]{axiomatic}.  Thus since $k$ is a field object,
Theorem~\ref{thm-nilp} gives the following.

\begin{corollary}[Thick subcategory theorem]\label{cor-thick}
Consider the category $\cat{F}=\thick(\Alg)$ of small objects of
$\derived{\Alg}$.
\begin{enumerate}
\item Every thick subcategory of $\cat{F}$ is trivial: the only thick
subcategories are $\{0\}$ and $\cat{F}$.
\item Every nonzero small object in $\derived{\Alg}$ is Bousfield
equivalent to the sphere object $\Alg$.
\end{enumerate}
\end{corollary}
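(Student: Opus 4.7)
The plan is to deduce both parts from the nilpotence theorem (Theorem~\ref{thm-nilp}) via the general axiomatic machinery of \cite[Corollary~5.2.3]{axiomatic}, which classifies the thick subcategories of small objects in any monogenic stable homotopy category in terms of subsets of the Bousfield classes of a family of nilpotence-detecting field objects. Since Theorem~\ref{thm-nilp} shows that the single field object $k$ already suffices to detect nilpotence, the classification collapses to the two-element lattice, giving part (a) immediately. Part (b) then follows by a short Bousfield class argument.

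For part (a), the crucial intermediate claim is that every nonzero small object $F$ satisfies $k_{*}F \neq 0$. I would verify this by applying Theorem~\ref{thm-nilp}(b) to the identity self-map $1_{F}: F \to F$. If $k_{*}F$ were zero, then $k_{*}(1_{F})$ would be the zero endomorphism, which is automatically nilpotent; the nilpotence theorem would then force $1_{F}$ itself to be nilpotent. But $(1_{F})^{n} = 1_{F}$ for all $n \geq 1$, so this would mean $1_{F}=0$ and hence $F=0$, a contradiction. Granted this, any nonzero thick subcategory $\cat{D} \subseteq \cat{F}$ contains some nonzero $F$, and the axiomatic classification applied with the single detecting field $k$ yields $\thick(F) = \cat{F}$, so $\cat{D} = \cat{F}$.

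For part (b), let $F \in \cat{F}$ be nonzero. By part (a), $\thick(F) = \thick(\Alg) = \cat{F}$, so $\Alg$ is built from $F$ via finitely many suspensions, cofibre sequences, finite direct sums, and retracts. Each of these operations preserves the relation $\bousfield{-} \leq \bousfield{F}$, so $\bousfield{\Alg} \leq \bousfield{F}$. The reverse inequality $\bousfield{F} \leq \bousfield{\Alg}$ is automatic, since $\bousfield{\Alg}$ is the maximum element of the Bousfield lattice. Therefore $\bousfield{F} = \bousfield{\Alg}$.

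The only genuine work lies in checking that the axiomatic classification applies, namely that $\derived{\Alg}$ satisfies the hypotheses of \cite[Corollary~5.2.3]{axiomatic}; this is essentially built into the monogenic stable homotopy category structure of $\derived{\Alg}$ recorded in Section~\ref{sec-notation}, so there is no substantive obstacle beyond invoking the reference and the observation that $k$ alone accounts for all nilpotence phenomena among small objects.
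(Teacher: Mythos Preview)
Your proposal is correct and follows essentially the same route as the paper: both deduce the corollary from Theorem~\ref{thm-nilp} together with \cite[Corollary~5.2.3]{axiomatic}, using that the single field object $k$ detects nilpotence. You simply supply more detail than the paper does---in particular the argument that $k_{*}F\neq 0$ for nonzero small $F$ and the explicit Bousfield class deduction for part~(b)---whereas the paper leaves these implicit in the citation.
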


Part (a) also follows from a result of Thomason \cite[Theorem
3.15]{thomason;triangulated}.

\begin{proposition}[Nishida's theorem, periodicity theorem]
\label{prop-periodicity}
For any small object $F$ in $\derived{\Alg}$, any self-map $g:F
\rightarrow F$ of nonzero degree is nilpotent.
\end{proposition}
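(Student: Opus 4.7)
The plan is to reduce to Theorem~\ref{thm-nilp}(b) by showing $k_*g$ is nilpotent, and the key finiteness input comes from smallness.

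First, since $F$ is small, $F \in \thick(\Alg)$ (as noted just before Corollary~\ref{cor-thick}). I claim $k_*F = \Tor^{\Alg}_*(k,F)$ is finite-dimensional as a bigraded $k$-vector space. Indeed, $k_*\Alg \cong k$ is one-dimensional, concentrated in bidegree $(0,0)$. Shifting $\Sigma^{i,j}\Alg$ just moves this one-dimensional group to bidegree $(i,j)$; finite direct sums of such contribute finite-dimensional bigraded vector spaces; the exact triangle associated to a cone gives a long exact sequence that preserves the property of being bigraded finite-dimensional; and retracts obviously preserve it. Induction along the construction of $\thick(\Alg)$ from $\Alg$ therefore shows that $k_*F$ is bigraded finite-dimensional.

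Second, let $g : F \to F$ be a self-map of bidegree $(a,b) \neq (0,0)$. Then $k_*g$ is an endomorphism of the bigraded vector space $k_*F$ that raises bidegree by $(a,b)$. Since the bigraded support of $k_*F$ is a finite subset of $\mathbf{Z}^2$, for $N$ sufficiently large the shift by $N(a,b)$ displaces every homogeneous component outside the support, so $(k_*g)^N = 0$. Hence $k_*g$ is nilpotent. By Theorem~\ref{thm-nilp}(b), the self-map $g$ itself is nilpotent.

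The only real content is the finite-dimensionality of $k_*F$, which is where smallness (perfectness of $F$) combines with the fact that tensoring the free module $\Alg$ down with $k$ collapses it to a single copy of $k$; after that, the argument is just the observation that a self-map of nonzero bidegree on a finite bigraded vector space is automatically nilpotent. I do not anticipate serious obstacles beyond making the inductive verification of the finiteness claim explicit.
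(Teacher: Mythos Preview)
Your proof is correct and follows essentially the same route as the paper's: both observe that $k_*\Alg \cong k$ in bidegree $(0,0)$, deduce that $k_*F$ is finite-dimensional for any $F \in \thick(\Alg)$, conclude that $k_*g$ is nilpotent for degree reasons, and then invoke Theorem~\ref{thm-nilp}(b). You have simply made explicit the thick-subcategory induction for the finiteness claim and the support-shifting argument for nilpotence of $k_*g$, which the paper leaves as one-line remarks.
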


\begin{proof}
Fix a map $g: \Sigma^{a,b} F \rightarrow F$ with $(a,b) \neq (0,0)$.
Since $k_{*}\Alg$ consists of a single copy of $k$ in degree
$(0,0)$, we see that for any object $F$ in $\thick (\Alg)$,
$k_{*}F$ is finite-dimensional as a vector space over $k$.  Thus for
degree reasons, $k_{*}g$ must be nilpotent, so Theorem~\ref{thm-nilp}
says that $g$ is nilpotent.
\end{proof}





\def\cftil#1{\ifmmode\setbox7\hbox{$\accent"5E#1$}\else
  \setbox7\hbox{\accent"5E#1}\penalty 10000\relax\fi\raise 1\ht7
  \hbox{\lower1.15ex\hbox to 1\wd7{\hss\accent"7E\hss}}\penalty 10000
  \hskip-1\wd7\penalty 10000\box7} \def\cprime{$'$}
\providecommand{\bysame}{\leavevmode\hbox to3em{\hrulefill}\thinspace}
\providecommand{\MR}{\relax\ifhmode\unskip\space\fi MR }
\providecommand{\MRhref}[2]{%
  \href{http://www.ams.org/mathscinet-getitem?mr=#1}{#2}
}
\providecommand{\href}[2]{#2}


\begin{thebibliography}{HPS97}

\bibitem[BC76]{brown-comenetz}
E.~H. Brown and M.~Comenetz, \emph{Pontrjagin duality for generalized homology
  and cohomology theories}, Amer. J. Math. \textbf{98} (1976), no.~1, 1--27.
  \MR{53 \#9196}

\bibitem[Bou79]{bousfield;boolean}
A.~K. Bousfield, \emph{The {B}oolean algebra of spectra}, Comment. Math. Helv.
  \textbf{54} (1979), no.~3, 368--377. \MR{81a:55015}

\bibitem[CE56]{cartan-eilenberg}
H.~Cartan and S.~Eilenberg, \emph{Homological algebra}, Princeton University
  Press, Princeton, N. J., 1956.

\bibitem[DP01]{dwyer-palmieri;ohkawa}
W.~G. Dwyer and J.~H. Palmieri, \emph{Ohkawa's theorem: there is a set of
  {B}ousfield classes}, Proc. Amer. Math. Soc. \textbf{129} (2001), no.~3,
  881--886. \MR{1 712 921}

\bibitem[Hop87]{hopkins;global}
M.~J. Hopkins, \emph{Global methods in homotopy theory}, Homotopy theory
  (Durham, 1985) (J.~D.~S. Jones and E.~Rees, eds.), Cambridge Univ. Press,
  Cambridge-New York, 1987, LMS Lecture Note Series 117, pp.~73--96.

\bibitem[HP99]{hovey-palmieri;bousfield}
M.~Hovey and J.~H. Palmieri, \emph{The structure of the {B}ousfield lattice},
  Homotopy Invariant Algebraic Structures (J.-P. Meyer, J.~Morava, and W.~S.
  Wilson, eds.), Contemp. Math., vol. 239, Amer. Math. Soc., Providence, RI,
  1999, pp.~175--196.

\bibitem[HPS97]{axiomatic}
M.~Hovey, J.~H. Palmieri, and N.~P. Strickland, \emph{Axiomatic stable homotopy
  theory}, Mem. Amer. Math. Soc. \textbf{128} (1997), no.~610, x+114.
  \MR{98a:55017}

\bibitem[HS98]{hopkins-smith}
M.~J. Hopkins and J.~H. Smith, \emph{Nilpotence and stable homotopy theory
  {I}{I}}, Ann. of Math. (2) \textbf{148} (1998), no.~1, 1--49.

\bibitem[Mar83]{margolis}
H.~R. Margolis, \emph{Spectra and the {S}teenrod algebra}, North-Holland
  Publishing Co., Amsterdam-New York, 1983, Modules over the Steenrod algebra
  and the stable homotopy category.

\bibitem[Nee92]{neeman;derived}
A.~Neeman, \emph{The chromatic tower for {D}({R})}, Topology \textbf{31}
  (1992), no.~3, 519--532.

\bibitem[Nee00]{neeman;oddball}
A.~Neeman, \emph{Oddball {B}ousfield classes}, Topology \textbf{39} (2000),
  no.~5, 931--935. \MR{MR1763956 (2001c:18007)}

\bibitem[Rav84]{ravenel;localization}
D.~C. Ravenel, \emph{Localization with respect to certain periodic homology
  theories}, Amer. J. Math. \textbf{106} (1984), no.~2, 351--414.

\bibitem[Tho97]{thomason;triangulated}
R.~W. Thomason, \emph{The classification of triangulated subcategories},
  Compositio Math. \textbf{105} (1997), no.~1, 1--27.

\end{thebibliography}
\end{document}